\documentclass[11pt,a4paper]{amsart}

\usepackage{mathtools,amssymb,amsthm}
\usepackage[l2tabu,orthodox]{nag} 
\usepackage[all, warning]{onlyamsmath} 
\usepackage{bm}
\usepackage{mathrsfs} 
\usepackage{stmaryrd} 
\usepackage{url}
\usepackage{mymacro}

\usepackage{xcolor}

\definecolor{darkgreen}{rgb}{0.0, 0.6, 0.0}

\usepackage{tikz}
\usetikzlibrary{intersections, calc, arrows, cd}

\usepackage[colorlinks,citecolor=darkgreen,linkcolor=red]{hyperref}
\RequirePackage{cleveref}

\usepackage{geometry}
\numberwithin{equation}{section}
\numberwithin{figure}{section}
\numberwithin{table}{section}
\allowdisplaybreaks 

\usepackage{enumitem}
\setenumerate[1]{label={\upshape(\arabic*)},nosep}
\setenumerate[2]{label={\upshape(\alph*)},nosep}
\setitemize{nosep}
\setdescription{nosep}

\newlist{enumarabic}{enumerate}{1}
\setlist*[enumarabic]{label={\upshape(\arabic*)}, nosep}

\newlist{enumAlph}{enumerate}{1}
\setlist*[enumAlph]{label={\upshape(\Alph*)}, nosep}

\newlist{enumalph}{enumerate}{1}
\setlist*[enumalph]{label={\upshape(\alph*)}, nosep}

\newlist{enumRoman}{enumerate}{1}
\setlist*[enumRoman]{label={\upshape(\Roman*)}), nosep}

\newlist{enumroman}{enumerate}{1}
\setlist*[enumroman]{label={\upshape(\roman*)}, nosep}

\usepackage{cleveref}
 \crefname{section}{\S\!}{\S\S\!}
 \crefname{subsection}{\S\!}{\S\S\!}
 \crefname{subsubsection}{\S\!}{\S\S\!}
 \crefname{equation}{equation}{equations}

\theoremstyle{plain}
\newtheorem{theorem}{Theorem}[section]
 \crefname{theorem}{Theorem}{Theorems}
\newtheorem{theorem-definition}[theorem]{Theorem-Definition}
 \crefname{theorem-definition}{Theorem-Definition}{Theorem-Definition}
\newtheorem{lemma}[theorem]{Lemma}
 \crefname{lemma}{Lemma}{Lemmas}
\newtheorem{proposition}[theorem]{Proposition}
 \crefname{proposition}{Proposition}{Propositions}
\newtheorem{proposition-definition}[theorem]{Proposition-Definition}
 \crefname{proposition-definition}{Proposition-Definition}{Proposition-Definition}
\newtheorem{corollary}[theorem]{Corollary}
 \crefname{corollary}{Corollary}{Corollaries}

 \crefname{claim}{Claim}{Claims}

 \crefname{question}{Question}{Questions}

 \crefname{problem}{Problem}{Problems}

 \crefname{conjecture}{Conjecture}{Conjectures}
\newtheorem{theoremA}{Theorem}
 \crefname{theoremA}{Theorem}{Theorems} 

\theoremstyle{definition}
\newtheorem{definition}[theorem]{Definition}
 \crefname{definition}{Definition}{Definitions}

 \crefname{fact}{Fact}{Facts}

 \crefname{notation}{Notation}{Notations}

 \crefname{observation}{Observation}{Observations}
\newtheorem{remark}[theorem]{Remark}
 \crefname{remark}{Remark}{Remarks}
\newtheorem{example}[theorem]{Example}
 \crefname{example}{Example}{Examples}
\newtheorem{construction}[theorem]{Construction}
 \crefname{construction}{Construction}{Constructions}

\theoremstyle{remark}

\def\Itor{\textrm{$I$-}\mathrm{tor}}
\def\Iztor{\textrm{$I_0$-}\mathrm{tor}}
\def\Izstor{\textrm{$I_0^{s.\flat}$-}\mathrm{tor}}

\begin{document}
\title[Regular rings and perfectoid towers]{Regular rings and perfectoid towers}

\author[K.~Hayashi]{Kazuki Hayashi}
\address{Department of Mathematics, Institute of Science Tokyo, 2-12-1 Ookayama, Meguro, Tokyo 152-8551}
\email{hazuki0694@gmail.com}

\thanks{2020 {\em Mathematics Subject Classification\/}: 13H05, 14G45}%

\keywords{perfectoid rings, perfectoid towers, regular rings, weakly proregular seqeuences}


\begin{abstract}
We prove a mixed-characteristic analogue of Kunz's theorem in terms of perfectoid towers: a Noetherian local ring of residue characteristic $p$ is regular if and only if it admits a flat map to a Noetherian ring that extends to a perfectoid tower. This result is deduced from another mixed-characteristic analogue due to O.~Gabber and J.~Lurie. We also characterize regularity for perfectoid towers via vanishing of single higher $\Tor$-module of the residue field with a perfectoid algebra.
\end{abstract}

\maketitle

\setcounter{tocdepth}{2}
\tableofcontents
\section{Introduction}

This paper investigates the regularity of commutative Noetherian local rings of positive/mixed characteristic via perfectoid towers. A commutative $\F_p$-algebra $R$ is called \emph{perfect} if its absolute Frobenius $\varphi\colon R\to R$ is bijective. The injectivity of $\varphi$ amounts to saying that $R$ is reduced, and in this case, the tower
\begin{equation}
\label{eq:perfecttower}
R\xr{\varphi}R\xr{\varphi}R\xr{\varphi}\cdots
\end{equation}
is called a \emph{perfect tower}. The inductive limit $R_{\mathrm{perf}}\coloneqq \varinjlim_\varphi R$ of \eqref{eq:perfecttower} is a perfect $\F_p$-algebra, and the notion of perfect towers provides a tower-theoretic analogue of perfect $\F_p$-algebras. On the other hand, \emph{perfectoid rings} are generalizations of perfect $\F_p$-algebras to mixed characteristic. Recently, S.~Ishiro, K.~Nakazato, and K.~Shimomoto \cite{INS25} introduced \emph{perfectoid towers} as a generalization of perfect towers (\cref{def:perfectoidtower}):
\begin{equation}
\label{eq:perfectoidtower}
R=R_0 \xr{t_0} R_1 \xr{t_1} R_2 \xr{t_2} \cdots.
\end{equation}
The $p$-adic completion $\wh{R_\infty}$ of the inductive limit $R_\infty\coloneqq\varinjlim_{i\geq 0}R_i$ of \eqref{eq:perfectoidtower} is a perfectoid ring, and the notion of perfectoid towers provides a tower-theoretic analogue of perfectoid rings.

In the perfectoid tower associated to a complete regular local rings (see \cref{ex:perfectoidtower} (3)), the transition maps $t_i\colon R_i\to R_{i+1}$ are flat for all $i\geq 0$. We first show that the same holds for \emph{general} perfectoid towers arising from regular rings; more precisely, the flatness of the $i$-th transition map characterizes the regularity of the $i$-th layer $R_i$ (\cref{thm:regularFlat}). 
This result is deduced from a mixed characteristic variant of Kunz's theorem due to J.~Lurie, suggested by O.~Gabber (see \cref{thm:LurKunz}). It is also essential that if some transition map is flat, then so are all transition maps (\cref{cor:flatrigid}).

As a consequence, we obtain the following criterion of regular rings via perfectoid towers.

\begin{theoremA}[{\cref{thm:towerKunz}}]
\label{thm:A}
Let $R$ be a commutative Noetherian local ring of residue characteristic $p$. Then the following conditions are equivalent.
  \begin{enumerate}
  \item The local ring $R$ is regular.
  \item There exists a perfectoid tower \eqref{eq:perfectoidtower} consisting of Noetherian rings such that for any \emph{(}or, equivalently, some\emph{)} $i\geq 0$ the transition map $t_i\colon R_i\to R_{i+1}$ is flat.
  \end{enumerate}
\end{theoremA}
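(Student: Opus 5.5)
The plan is to derive \cref{thm:A} from the two results announced in the introduction: \cref{thm:regularFlat}, which says that for a perfectoid tower of Noetherian rings, flatness of $t_i$ is equivalent to regularity of $R_i$, and \cref{cor:flatrigid}, which says that if one transition map is flat then all of them are. The ``any/some'' clause in (2) is then exactly \cref{cor:flatrigid}, so only the equivalence $(1)\Leftrightarrow(2)$ with $i=0$ needs proof.

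For $(2)\Rightarrow(1)$: given a perfectoid tower of Noetherian rings with $R_0=R$ and some flat $t_i$, \cref{cor:flatrigid} makes $t_0$ flat. Then \cref{thm:regularFlat} applied to the $0$-th layer shows that $R_0=R$ is regular. If \cref{thm:regularFlat} is only available in a local form or under extra hypotheses on the layers, I would localize: $t_0$ is flat, and the map on $p$-th powers behaves like a Frobenius modulo $p$, so the Gabber--Lurie criterion (\cref{thm:LurKunz}) applies to $R\to R_1$ and gives regularity of $R$ directly.

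For $(1)\Rightarrow(2)$ we must construct a tower. Since the tower is required to start at $R$ itself, I would first try to work with $R$ directly. If $p$ is not in the maximal ideal, then $R$ is in fact an $\F_p$-algebra, and since it is regular it is reduced. In that case the perfect tower \eqref{eq:perfecttower} is a perfectoid tower, it consists of Noetherian rings, and Kunz's theorem makes Frobenius flat. If $R$ has mixed characteristic, the natural candidate is the tower of \cref{ex:perfectoidtower}(3), which is built for complete regular local rings by adjoining compatible $p$-power roots of $p$ and of a regular system of parameters. The issue is that \cref{ex:perfectoidtower}(3) is stated for complete rings, while $R$ need not be complete.

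The step I expect to be the main obstacle is extending this to noncomplete regular $R$, that is, checking that the construction descends from $\widehat R$ to $R$. My first attempt is to define $R_{i}=R[T_1,\dots,T_d]/(T_j^{p^i}-x_j)$ for a regular system of parameters $x_1,\dots,x_d$, with the $p$-power root of $p$ included if $p\in\mathfrak m\setminus\mathfrak m^2$, and to treat the ramified case separately. Each such $R_i$ is a Noetherian local ring and free over $R$, with $R_i\to R_{i+1}$ free. I would then verify the perfectoid tower axioms of \cref{def:perfectoidtower} after $p$-adic completion, reducing to the complete case via $R_i\otimes_R\widehat R$, on the expectation that those axioms are insensitive to $\mathfrak m$-adic completion.
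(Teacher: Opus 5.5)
Your $(2)\Rightarrow(1)$ and your handling of the ``any/some'' clause are the paper's argument. \cref{cor:flatrigid} makes $t_0$ flat, and \cref{thm:regularFlat} (with $R_1$ local by \cref{prop:towerLocal}) gives regularity of $R$. Your Gabber--Lurie fallback is unnecessary, and it is not ``direct''. \cref{thm:LurKunz} yields regularity of $R_1$, not $R$, and only once $R_1$ is known to be $I_0$-torsion free, which the paper obtains by tilting; one then descends along the faithfully flat $t_0$. A minor slip: residue characteristic $p$ forces $p\in\mathfrak{m}$, so your case split should be $p=0$ versus $p\neq 0$.

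The genuine gap is in $(1)\Rightarrow(2)$, and it has two parts.

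First, the residue field. Axioms \textbf{(c)} and \textbf{(d)} of \cref{def:perfectoidtower} together say $\overline{t_i}(R_i/I_0R_i)=\varphi(R_{i+1}/I_0R_{i+1})$. Since $\overline{t_i}$ is local (\cref{lem:localtower}), reducing to residue fields shows that the image of $k_i$ in $k_{i+1}$ must equal $k_{i+1}^p$. Your rings $R_i=R[T_1,\dots,T_d]/(T_j^{p^i}-x_j)$ all have residue field $k$ with identity transition maps, so \textbf{(d)} fails whenever $k$ is imperfect, already for complete $R$. This is exactly what the $C(k^{1/p^i})$ in \cref{ex:perfectoidtower}~(2) is for, and the paper's proof of this implication is just an appeal to that construction. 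The residue field has to be perfected along the tower, and when $[k:k^p]=\infty$ this needs a completed construction rather than adjoining finitely many roots.

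Second, the descent from $\widehat{R}$, which fails even when $k$ is perfect. Axiom \textbf{(d)} is a statement about the set of $p$-th powers, which is not an $R$-submodule, so faithful flatness of $R\to\widehat{R}$ cannot descend it. Concretely, take $R$ unramified with $x_1=p$ and $k$ perfect, and write $T_j$ for the generators of $R_{i+1}$. Then $\varphi(R_{i+1}/pR_{i+1})=(R/pR)^p[T_1^p,\dots,T_d^p]$. So \textbf{(d)} is equivalent to $R/pR=(R/pR)^p[\overline{x_2},\dots,\overline{x_d}]$. This forces $R/pR$ to be $F$-finite. The completion $\widehat{R}/p\widehat{R}=k\llbracket \overline{x_2},\dots,\overline{x_d}\rrbracket$ always is, but $R/pR$ need not be.

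Your construction does work for unramified $R$ with $R/pR$ $F$-finite, provided you also adjoin $p$-power roots of lifts of a $p$-basis of $k$. For general non-complete $R$ you still lack a construction of a Noetherian tower starting at $R$ itself. Be aware that the paper's one-line argument, which invokes the complete-case construction of \cref{ex:perfectoidtower}~(2), does not spell out this case either.
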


Although this theorem---except for the parenthetical assertion---can be deduced from the $p$-adic Kunz theorem of B.~Bhatt, S.~B.~Iyengar, and L.~Ma (see \cref{rem:anotherpf}), we present an independent proof. We emphasize that our proof does not rely on the Auslander--Buchsbaum--Serre regularity criterion. Thus we can apply \cref{thm:A} to obtain a new proof of the stability of regularity under localization at prime ideals containing $p$ (see \cref{prop:RLRloc}).

On the other hand, Bhatt--Iyengar--Ma gave a sharper characterization of regularity via perfect(oid) algebras. For instance, if $(R,\m,k)$ is an excellent reduced Noetherian local $\F_p$-algebra, it suffices to assume the vanishing of a \emph{single} $\Tor$-module $\Tor^R_q(R_{\mathrm{perf}},k)$ (\cite[Theorem 4.13 (1)]{BIM19}).\footnote{This result was previously proved by I.~M.~Aberbach and J.~Li (\cite[Remark 3.6]{AL08}) using different methods, under the mild additional assumption that $R$ is equidimensional.} A key input in their proof is that systems of parameters for $R$ are \emph{weakly proregular} on $R_{\mathrm{perf}}$. In this paper, we establish such a rigidity result for more general perfect(oid) algebras:

\begin{theoremA}[{\cref{thm:WPRreg}}]
\label{thm:B}
Let $(R,\m,k)$ be a commutative Noetherian local ring. Then $R$ is regular if any one of the following conditions is satisfied.
  \begin{enumerate}
  \item The local ring $R$ has positive characteristic, and there exists a perfect $R$-algebra $A$ such that $\Tor^R_q(A,k)=0$ for some $q\geq 1$.
  \item The local ring $R$ has mixed characteristic and there exists a perfectoid tower \eqref{eq:perfectoidtower} arising from a pair $(R,I_0)$ such that $R$ is $I_0$-torsion free, together with a perfectoid $\wh{R_\infty}$-algebra $A$ such that $\Tor^R_q(A,k)=0$ for some $q\geq 1$.
  \end{enumerate}
\end{theoremA}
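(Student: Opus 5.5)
The plan is to reduce to the flatness criterion of Theorem A, via the Koszul complex and weak proregularity, in the spirit of Bhatt--Iyengar--Ma. Throughout, fix a system of parameters $\underline{x}=x_1,\dots,x_d$ of $R$; in case (2) choose it with $x_1=p$, or at least so that it is compatible with the tower. In case (1) I may replace $A$ by an $R_{\mathrm{perf}}$-algebra when needed, since a perfect $R$-algebra receives a map from $R_{\mathrm{perf}}$. The argument has four steps.

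\textbf{Step 1 (weak proregularity).} Show that $\underline{x}$ is weakly proregular on $A$. In characteristic $p$ this follows from perfectness. Multiplication by $x^{p^e}$ on $A$ is identified with multiplication by $x$ after the Frobenius twist, which is an automorphism of $A$. Hence the Koszul homology $H_j(\underline{x}^{p^e};A)$ is isomorphic to $H_j(\underline{x};A)$ via $\varphi^e$. The transition maps $H_j(\underline{x}^{p^{e+1}};A)\to H_j(\underline{x}^{p^e};A)$ are, up to these identifications, multiplication by products $(x_1\cdots x_d)^{p^e(p-1)}$-type elements. A standard argument then shows the inverse system is pro-zero: $(x)^{1/p^\infty}$-torsion is killed by the perfect structure, so the images of the transition maps are annihilated by $\underline{x}$-powers that tend to $1$ in the valuative sense.

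In case (2) I would carry this out in two stages. First, pass to the tilt $A^\flat$ and use the tilt of the tower, with $I_0^\flat$ the tilting ideal, which the paper's notation $I_0^{s.\flat}$ suggests. Second, transport the result back using $A/p^{1/p^\infty}\cong A^\flat/(p^\flat)^{1/p^\infty}$ and the $I_0$-torsion-freeness.

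\textbf{Step 2 (Tor vanishing propagates).} From $\Tor^R_q(A,k)=0$ for one $q\ge1$, deduce $\Tor^R_i(A,k)=0$ for all $i\ge q$, and then for all $i\ge1$. Weak proregularity gives that $R\Gamma_{\m}(A)$ is computed by the Čech complex, and that derived $\m$-completion of $A$ is computed by the Koszul limit. I would then use the rigidity argument of BIM: consider $K(\underline{x}^{n};A)\otimes^{\mathbf L}_{R}k$. The vanishing of a single Tor forces the Koszul homologies $H_j(\underline{x};A)$ to vanish in the relevant range, using Nakayama in the derived $\m$-complete setting. Then the self-similarity $H_j(\underline{x}^{p^e};A)\cong H_j(\underline{x};A)$, or its perfectoid analogue, together with pro-zeroness forces $H_j(\underline{x};A)=0$ for all $j\ge1$. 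In other words, $\underline{x}$ is a regular sequence on $A$ up to completion, and $A/\underline{x}A\neq0$. This is the step I expect to be the main obstacle, especially in mixed characteristic where the Frobenius self-similarity only holds modulo $p$. There I would first try to work with $A/pA$, which is semiperfect, and with $\Tor^{R/p}$, using the $I_0$-torsion-freeness to ensure that $p$ is a nonzerodivisor so that passing to $R/p$ is harmless.

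\textbf{Step 3 (deduce regularity).} Once $\underline{x}$ is regular on $\widehat A$ and $\widehat A/\m\widehat A\ne0$, a Koszul-complex comparison gives that $\Tor^R_i(k,\widehat A)=0$ for $i>0$, i.e.\ $\widehat A$ is $\m$-adically flat over $R$. Regularity then follows in each case:
\begin{itemize}
\item In case (1), the Frobenius on $R$ factors through $A$ compatibly, so Kunz's theorem in its faithfully-flat-descent form gives regularity. Concretely, $\varphi^e_*R\otimes$-finite-length computations show $\varphi$ is flat.
\item In case (2), $\widehat{R_\infty}\to A$ and the tower give a map from $R$ to a perfectoid ring that is $\m$-adically flat. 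Then the Gabber--Lurie criterion \cref{thm:LurKunz}, or equivalently \cref{thm:regularFlat} through Theorem A, yields that $R$ is regular.
\end{itemize}
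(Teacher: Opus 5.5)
Your Steps 2--3 aim at a conclusion that is false under the hypotheses: you try to upgrade one vanishing $\Tor$ to $\m$-adic flatness of $A$. Take $R=\Z_p\llbracket x\rrbracket$ with the tower $R_i=\Z_p[p^{1/p^i}]\llbracket x^{1/p^i}\rrbracket$ of \cref{ex:perfectoidtower} (2), arising from $(R,p)$. Let $A$ be the $p$-adic completion of $\Z_p[p^{1/p^\infty}]$, made into a perfectoid $\wh{R_\infty}$-algebra via $x^{1/p^i}\mapsto 0$. The Koszul complex on $p,x$ gives $\Tor^R_2(A,k)=\{a\in A : pa=0\}=0$ but $\Tor^R_1(A,k)\cong A/pA\neq 0$, and $x$ kills $A=\widehat A$. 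So in a legitimate instance of the hypotheses, neither ``$\Tor^R_i(A,k)=0$ for all $i\geq 1$'' nor ``$\underline{x}$ is regular on $\widehat A$'' holds.

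The individual mechanisms also fail:
\begin{itemize}
\item A pro-zero system can have constant nonzero terms: in a reduced ring, $\mathrm{Ann}_A(x^n)=\mathrm{Ann}_A(x)$ with zero transition maps. So self-similarity plus pro-zeroness forces nothing.
\item A Koszul comparison computes $\Tor^R_\ast(k,-)$ only when $(\underline{x})=\m$, i.e.\ when $R$ is already regular. For a non-regular complete local domain of characteristic $p$, $R^+$ is perfect and big Cohen--Macaulay but not flat.
\item \cref{thm:LurKunz} and \cref{thm:regularFlat} concern the Frobenius of $R/\pi^pR$ and the transition maps $t_i$ of a Noetherian tower. A flat map $R\to A$ cannot be fed into them.
\end{itemize}
Also, $A/\m A\neq 0$ cannot be derived, since $A=0$ satisfies the hypotheses. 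It has to be assumed, as it implicitly is in the paper's appeal to Bhatt--Iyengar--Ma.

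The missing ideas are the ones the paper's proof (\cref{prop:WPRsop} and \cref{lem:BIM}) rests on. Weak proregularity enters only through the Matlis dual. Since $A^\vee=\Hom_R(A,E_R(k))$ is an injective $A$-module, weak proregularity of an s.o.p.\ on $A$ gives $H^i_\m(A^\vee)=0$ for $i\geq 1$, i.e.\ $s(A)\leq 0$ in \cref{prop:CIM}. That propagates $\Tor^R_q(A,k)=0$ to all degrees $\geq q$, and by the example above, no further. Regularity then comes from \cite[Theorem 4.1]{BIM19}: a perfectoid $R$-algebra $A$ with $\m A\neq A$ and eventually vanishing $\Tor^R_\ast(A,k)$ forces $R$ to be regular. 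No flatness and no Theorem A is involved.

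For weak proregularity in mixed characteristic you need a concrete use of the tower, which your tilting sketch does not supply. Weak proregularity concerns Koszul homology of $A$ itself and is not transported along $A/p^{1/p^\infty}\cong A^\flat/(p^\flat)^{1/p^\infty}$. The paper's argument:
\begin{itemize}
\item Choose an s.o.p.\ $\bm{x}$ of $R^{s.\flat}$ with $I_0^{s.\flat}=(x_1)$, which is possible by $I_0$-torsion freeness.
\item Since $\sharp$ maps $R^{s.\flat}$ into $R+I_0\wh{R_\infty}$, write $x_k^\sharp=y_k+f_0z_k$, where $\bm{y}$ is an s.o.p.\ of $R$ and $\bm{x}^\sharp A=\bm{y}A$.
\item Apply Gabber--Ramero (\cref{prop:WPR_GR}) to $\bm{x}^\sharp$, then Schenzel's independence of the choice of s.o.p.
\end{itemize}
In characteristic $p$ your Step 1 conclusion is correct (it is \cref{prop:WPR_GR} with $A^\flat=A$), though your sketch does not prove it.
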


A key input in its proof is a result of of Gabber and L.~Ramero on weakly proregular sequences in perfectoid rings (see \cref{prop:WPR_GR}), together with the author's previous work \cite{HIS26}. Finally, we give two constructions of perfectoid towers arising from the pair consisting of regular rings and ideals not generated by the prime number $p$ (\cref{prop:Rees,prop:g}). The first construction is obtained by taking the blow-up of an unramified regular local ring at its closed point.

\addtocontents{toc}{\protect\setcounter{tocdepth}{-2}} 

\subsection*{Acknowledgements} 
The author would like to express his sincere gratitude to Ryo Ishizuka for suggesting the idea of \cref{prop:g}. The author also thanks Shinnosuke Ishiro, Kei Nakazato, Kazuma Shimomoto, and Tatsuki Yamaguchi for their continuous encouragement.

\subsection*{Notations and conventions}
  \begin{itemize}
  \item We consistently fix a prime number $p>0$.
  \item All rings are assumed to be commutative and unital (unless otherwise stated).
  \item For an $\F_p$-algebra $R$, let $\varphi=\varphi_R\colon R\to R$ denote the absolute Frobenius.
  \item By a \emph{pair} we simply mean a couple $(A,I)$ consisting of a ring $A$ and an ideal $I$ of $A$. When the ideal $I$ is principal, say $I=(a)$, then we often write $(A,a)$ in place of $(A,I)$.
  \item For a pair $(A,I)$ and an $A$-module $M$, we say that an element $x\in M$ is \emph{$I$-torsion} if for all $a\in I$ there exists an integer $n>0$ such that $a^nx=0$. Let $M_{\textrm{$I$-}\mathrm{tor}}$ denote the $A$-submodule of $M$ consisting of all $I$-torsion elements in $M$. We say that $M$ is \emph{$I$-torsion free} if $M_{\textrm{$I$-}\mathrm{tor}}=0$. Note that we follow the terminologies in \cite{FKI}.
  \item For a pair $(A,I)$, when we say an $A$-module $M$ is $I$-adically complete, we always mean that $M$ is Hausdorff complete with respect to the $I$-adic topology.
  \end{itemize}

\addtocontents{toc}{\protect\setcounter{tocdepth}{2}} 
\section{Recollections on perfectoid towers}

In this section, we recall the definition of perfectoid towers with properties and examples most relevant to us. 

For an $\F_p$-algebra $R$, let $\varphi=\varphi_R\colon R\to R$ denote the absolute Frobenius: $\varphi(x)=x^p$ for $x\in R$. The ring $R$ is reduced if and only if $\varphi$ is injective. In this case, we have a ``tower'' of $\F_p$-algebras
\begin{equation}
\label{eq:perftower1}
R\xr{\varphi} R\xr{\varphi}R\xr{\varphi}\cdots.
\end{equation}

In general, a \emph{tower of rings/$\F_p$-algebras} is an inductive system of rings/$\F_p$-algebras $\textrm{{\boldmath $R$}}=\{R_i\}_{i\geq 0}=\{R_i,t_i\}_{i\geq 0}$, and a \emph{morphism of towers} is a morphism of inductive systems. A \emph{perfect tower} is a tower of $\F_p$-algebras that is isomorphic to \eqref{eq:perftower1} for a reduced $\F_p$-algebra $R$.

Perfectoid towers are generalizations of perfect towers to mixed characteristic. The following class of towers is a predecessor of perfectoid towers.

\begin{definition}[{\cite[Definition 3.4]{INS25}}]
\label{def:inseptower}
Let $(R,I)$ be a pair, and $\textrm{{\boldmath $R$}}=\{R_i,t_i\}_{i\geq 0}$ a tower of rings.
We say that {\boldmath $R$} is a \emph{purely inseparable tower arising from $(R,I)$} if it satisfies the following conditions.
    \begin{itemize}
    \item[\textbf{(a)}] $R_0=R$ and $p\in I$.
    \item[\textbf{(b)}] For any $i\geq 0$, the ring map $\ol{t_i}\colon R_i/IR_i \to R_{i+1}/IR_{i+1}$ induced by $t_i$ is injective.
    \item[\textbf{(c)}] For any $i\geq 0$, we have $\Im(\varphi_{R_{i+1}/IR_{i+1}}) \subset \Im(\ol{t_i})$.
    \end{itemize}
Under these assumptions, for any $i\geq 0$ the absolute Frobenius $\varphi\colon R_{i+1}/IR_{i+1}\to R_{i+1}/IR_{i+1}$ factors uniquely through $\ol{t_i}$ as follows:
\[
\begin{tikzcd}
R_{i+1}/IR_{i+1} \rar["\varphi"] \ar[rd,"F_i"'] & R_{i+1}/IR_{i+1} \\
 & R_i/IR_i \uar["\ol{t_i}"',hookrightarrow]
\end{tikzcd}
\]
We call $F_i$ the \emph{$i$-th Frobenius projection} (of {\boldmath $R$} associated to $(R,I)$).
\end{definition}

By definition, the extension of $\F_p$-algebras $\ol{t_i}\colon R_i/IR_i\hookrightarrow R_{i+1}/IR_{i+1}$ is purely inseparable in the following sense:

\begin{definition}[{\cite[(6.15) Discussion]{HH94}}]
We say that an extension of $\F_p$-algebras $A\subset B$ is \emph{purely inseparable} if every element of $B$ has some $p$-power in $A$.
\end{definition}

This definition can be extended to any homomorphism of $\F_p$-algebras $f\colon A\to B$ in a natural way: we say that $f\colon A\to B$ is \emph{purely inseparable} if the extension $\Im(f)\subset B$ is purely inseparable.

\begin{remark}
\label{rem:purelyinsep}
Let $f\colon A\to B$ be a purely inseparable homomorphism of $\F_p$-algebras.
  \begin{enumerate}
  \item The ring homomorphism $f$ is integral, that is, the extension $\Im(f)\subset B$ is integral.
  \item For any homomorphism $A\to A'$ of $\F_p$-algebras, the induced homomorphism $A'\to B\otimes_AA'$ is purely inseparable.
  \end{enumerate}
\end{remark}

The notion of purely inseparable homomorphisms is related to that of universal homeomorphisms as follows:

\begin{proposition}
\label{prop:piuh}
Let $f\colon A\to B$ be a homomorphism of $\F_p$-algebras. Then the following conditions are equivalent.
  \begin{enumerate}
  \item $f$ is a universal homeomorphism, that is, $\Spec(B)\to\Spec(A)$ is a universal homeomorphism in the category of schemes.
  \item $\Ker(f)$ is contained in the nilradical of $A$, and $f$ is purely inseparable.
  \end{enumerate}
\end{proposition}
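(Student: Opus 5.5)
We prove the equivalence by reducing to the standard characterization of universal homeomorphisms: a morphism of schemes is a universal homeomorphism if and only if it is integral, universally injective (radicial) and surjective (EGA IV, 18.12.11, or Stacks Project Tag 04DF). For an $\F_p$-algebra map $f\colon A\to B$ we then translate each condition into ring-theoretic terms.

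\emph{(2) $\Rightarrow$ (1).} Assume $\Ker(f)\subset\sqrt{0}$ and $f$ purely inseparable.

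First, $f$ is integral by \cref{rem:purelyinsep} (1). Since $\Ker(f)$ is nilpotent, $\Spec(\Im f)\to\Spec(A)$ is a homeomorphism. The extension $\Im(f)\subset B$ is injective and integral, so $\Spec(B)\to\Spec(\Im f)$ is surjective. Hence $\Spec(B)\to\Spec(A)$ is surjective.

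For radiciality, it suffices to check that for every field $K$ the map $\Hom(B,K)\to\Hom(A,K)$ is injective. Given $g,h\colon B\to K$ agreeing on $A$ and $b\in B$, choose $e$ with $b^{p^e}\in\Im(f)$. Then $g(b)^{p^e}=h(b)^{p^e}$, and since Frobenius on $K$ is injective, $g(b)=h(b)$.

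Thus $f$ is integral, radicial and surjective, hence a universal homeomorphism. Alternatively, one can use \cref{rem:purelyinsep} (2) to see that the hypotheses are stable under base change $A\to A'$. (The kernel condition is preserved because it is equivalent to surjectivity on spectra, given integrality.) It then suffices to prove that $f$ itself is a homeomorphism, which is a closed bijective continuous map.

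\emph{(1) $\Rightarrow$ (2).} Assume $f$ is a universal homeomorphism.

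The kernel condition follows from surjectivity: $\Spec(B)\to\Spec(A)$ factors through the closed subset $V(\Ker f)$, so $V(\Ker f)=\Spec(A)$ and $\Ker(f)\subset\sqrt{0}$.

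The main obstacle is pure inseparability. We may replace $A$ by $\Im(f)$ and assume $A\subset B$; by the universal homeomorphism criterion, $A\subset B$ is integral and radicial. Fix $b\in B$ and consider the universal homeomorphism $A\to B$ base changed along $A\to B$. The two maps
\[
B\rightrightarrows B\otimes_AB,\qquad b\mapsto b\otimes1,\quad b\mapsto 1\otimes b,
\]
agree after composing to the reduced quotient of every residue field. This is because radiciality implies that the diagonal $\Spec(B)\to\Spec(B\otimes_AB)$ is surjective, so $b\otimes1-1\otimes b$ is nilpotent in $B\otimes_AB$.

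To extract a $p$-power lying in $A$, we would then invoke the standard result (Stacks Project Tag 0CNF, or Kollár's characterization) that for integral radicial surjective maps in characteristic $p$, $B$ is generated over $A$ by elements whose $p^e$-powers lie in $A$. Concretely, one reduces to $A\to A[b]$ finitely presented and uses a Noetherian approximation. In practice we would cite this: for $\F_p$-algebras, universal homeomorphisms are exactly the maps such that for every $b\in B$ some $b^{p^e}$ lies in $\Im(f)$, and $\Ker(f)$ is locally nilpotent. This is essentially \cite[Tag 0CNF]{} in content. We expect the delicate part to be this passage from nilpotence of $b\otimes1-1\otimes b$ to an actual $p$-power of $b$ lying in $A$.
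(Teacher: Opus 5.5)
Your proposal is correct, and its overall structure matches the paper's. The paper also proves only (2)$\Rightarrow$(1) directly and takes the full equivalence from the literature (\cite[Proposition 3.3.3]{Alp14}), including the pure inseparability in (1)$\Rightarrow$(2). Your citation of the Stacks Project for that step is therefore on par with the paper. The nilpotence of $b\otimes 1-1\otimes b$ is a correct but inessential detour. It gives $b^{p^e}\otimes 1=1\otimes b^{p^e}$ in $B\otimes_A B$, but $A\to B$ need not be pure, so this alone does not put $b^{p^e}$ in $\Im(f)$. The citation is what actually does the work there, exactly as in the paper.

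For (2)$\Rightarrow$(1), your main route differs slightly from the paper's. You check that $f$ is integral, radicial and surjective, with radiciality shown via field-valued points and injectivity of Frobenius on fields. You then invoke the characterization of universal homeomorphisms as integral, universally injective, surjective morphisms. Since all three properties are stable under base change, this packages the base-change step into standard facts.

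The paper instead uses base-change stability of the hypotheses in (2) and reduces to $f$ injective. It then gets closedness and surjectivity from integrality and checks injectivity on primes by the same $p$-power trick. Your ``alternative'' is precisely the paper's argument. Your remark that $\Ker(f)\subset\sqrt{0}$ survives base change (for integral maps it is equivalent to surjectivity on spectra) fills in a point the paper leaves implicit when it says ``we may assume that $f$ is injective''.

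One small imprecision: $\Ker(f)$ is only locally nilpotent (contained in the nilradical), not necessarily nilpotent. This is harmless, since you only use $V(\Ker f)=\Spec(A)$.
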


\begin{proof}
The assertion follows from \cite[Proposition 3.3.3]{Alp14}.
But, for the reader's convenience, let us include here the proof of the implication ``(2) $\Rightarrow$ (1).''
Since the property ``purely inseparable'' is stable under base change (\cref{rem:purelyinsep} (2)), it suffices to show that $\Spec B\to\Spec A$ is a homeomorphism. By assumption, we may assume that $f$ is injective. Then $A\subset B$ is an integral extension (\cref{rem:purelyinsep} (1)), and thus $\Spec(B)\to\Spec(A)$ is surjective and closed. It remains to show the injectivity. 
Assume $\frakP,\frakP' \in\Spec(B)$ satisfy $\frakP\cap A=\frakP'\cap A\eqqcolon\p$. Pick $x\in\frakP$. By assumption, $x^n\in A$ for some $n>0$. Then $x^n\in\frakP\cap A=\p\subset\frakP'$, and so $x\in\frakP'$. Hence $\frakP\subset\frakP'$. Similarly, we have $\frakP'\subset\frakP$.
\end{proof}

Consequently, we obtain the following result on purely inseparable towers.

\begin{corollary}
\label{cor:piuh}
Let $\textrm{{\boldmath $R$}}=\{R_i,t_i\}_{i\geq 0}$ be a purely inseparable tower arising from a pair $(R,I)$. Then for any $i\geq 0$, the following assertions hold.
  \begin{enumerate}
  \item The map $\ol{t_i}\colon R_i/IR_i\to R_{i+1}/IR_{i+1}$ induced by $t_i$ is a universal homeomorphism.
  \item The $i$-th Frobenius projection $F_i\colon R_{i+1}/IR_{i+1}\to R_i/IR_i$ is a universal homeomorphism.
  \item If both $R_i$ and $R_{i+1}$ are $I$-adically henseilan, then $t_i\colon R_i\to R_{i+1}$ induces an equivalence of the categories of finite \'{e}tale algebras
  \[
  \Fet_{/R_i} \xr{\simeq} \Fet_{/R_{i+1}}.
  \]
  \end{enumerate}
\end{corollary}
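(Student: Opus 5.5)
For (1), I will apply \cref{prop:piuh} to $\ol{t_i}$. By axiom (b) of \cref{def:inseptower}, $\ol{t_i}$ is injective, so its kernel is zero and in particular lies in the nilradical. By axiom (c), every $x\in R_{i+1}/IR_{i+1}$ satisfies $x^p=\varphi(x)\in\Im(\ol{t_i})$. Hence $\ol{t_i}$ is purely inseparable, and the implication (2)$\Rightarrow$(1) of \cref{prop:piuh} shows that $\ol{t_i}$ is a universal homeomorphism.

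For (2), I will again use \cref{prop:piuh}, now for $F_i$.
\begin{itemize}
\item \emph{Kernel.} If $F_i(x)=0$, then $x^p=\ol{t_i}(F_i(x))=0$, so $\Ker(F_i)$ is contained in the nilradical.
\item \emph{Pure inseparability.} Take $y\in R_i/IR_i$. Then $F_i(\ol{t_i}(y))$ is the unique preimage under $\ol{t_i}$ of $\varphi(\ol{t_i}(y))=\ol{t_i}(y^p)$. Since $\ol{t_i}$ is injective, this gives $F_i(\ol{t_i}(y))=y^p$, so every element $y$ has $y^p\in\Im(F_i)$.
\end{itemize}
\cref{prop:piuh} then shows that $F_i$ is a universal homeomorphism. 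Alternatively, $\varphi=\ol{t_i}\circ F_i$ with both $\varphi$ and $\ol{t_i}$ universal homeomorphisms, which would also give (2) by a two-out-of-three argument. The direct route avoids having to justify that property, so I will use it.

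For (3), I will combine three facts.
\begin{itemize}
\item \emph{Henselian reduction.} For a henselian pair $(A,J)$, base change induces an equivalence $\Fet_{/A}\simeq\Fet_{/(A/J)}$ (Gabber / Elkik; \cite[Tag 09ZL]{stacks} style). Applying this to $(R_i,IR_i)$ and $(R_{i+1},IR_{i+1})$, which are henselian by the $I$-adic henselian hypothesis, reduces the problem to showing that $\ol{t_i}$ induces $\Fet_{/(R_i/IR_i)}\simeq\Fet_{/(R_{i+1}/IR_{i+1})}$.
\item \emph{Compatibility.} This step works because base change along $t_i$ commutes with reduction modulo $I$: $(B\otimes_{R_i}R_{i+1})/I\cong (B/IB)\otimes_{R_i/IR_i}R_{i+1}/IR_{i+1}$.
\item \emph{Topological invariance.} A universal homeomorphism induces an equivalence of finite \'etale sites (SGA1 / \cite[Tag 04DY]{stacks}). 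Combined with (1), this gives the required equivalence for $\ol{t_i}$.
\end{itemize}
The main obstacle is (3). Its content lies in citing the correct forms of these two invariance theorems (the henselian-pair equivalence and topological invariance of the \'etale site) and checking the compatibility of the resulting square of functors; (1) and (2) are formal.
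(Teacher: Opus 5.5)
Your proof is correct. Parts (1) and (3) follow the paper's argument. In (1) you apply \cref{prop:piuh} to $\ol{t_i}$. In (3) you combine (1) with the henselian-pair equivalence (Stacks Tag 09ZL) and the invariance of finite \'etale algebras under universal homeomorphisms. For the latter the paper cites Tag 0BQN, which is directly the finite \'etale statement you need.

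For (2) your route differs. The paper uses that the absolute Frobenius of $R_{i+1}/IR_{i+1}$ is a universal homeomorphism (Tag 0CC8). It then deduces the claim for $F_i$ from the factorization $\varphi=\ol{t_i}\circ F_i$, together with (1) and the two-out-of-three property (Tag 0H2M). You instead verify the criterion of \cref{prop:piuh} directly for $F_i$. You show $\Ker(F_i)$ is nil, and you derive the identity $F_i\circ\ol{t_i}=\varphi_{R_i/IR_i}$ from the injectivity of $\ol{t_i}$.

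Your version is self-contained, using only \cref{prop:piuh} and the defining property of $F_i$. It also makes the symmetric relation between $F_i$ and $\ol{t_i}$ explicit. The paper's version is shorter and purely formal at the level of schemes, but it relies on those two external results.
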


\begin{proof}
(1) follows from \cref{prop:piuh}.

(2) The absolute Frobenius of $R_{i+1}/IR_{i+1}$ is a universal homeomorphism (\cite[\href{https://stacks.math.columbia.edu/tag/0CC8}{Tag 0CC8}]{stacks-project}). Then so is $F_i$ by (1) and the 2-out-of-3 property (\cite[\href{https://stacks.math.columbia.edu/tag/0H2M}{Tag 0H2M}]{stacks-project}).

(3) follows from (1), \cite[\href{https://stacks.math.columbia.edu/tag/09ZL}{Tag 09ZL}]{stacks-project}, and \cite[\href{https://stacks.math.columbia.edu/tag/0BQN}{Tag 0BQN}]{stacks-project}.
\end{proof}

\begin{remark}
\cref{cor:piuh} (3) was already proved in \cite[Lemma 3.12]{INS25}; however, the proof there passes to the direct perfections of $R_i/I_0R_i$ and $R_{i+1}/I_0R_{i+1}$. Our proof focuses only $R_i/I_0R_i$ and $R_{i+1}/I_0R_{i+1}$ itself.
\end{remark}

The following lemma will be used later.

\begin{lemma}
\label{lem:localtower}
Let $\textrm{{\boldmath $R$}}=\{R_i,t_i\}_{i\geq 0}$ be a purely inseparable tower arising from a pair $(R,I_0)$. Fix $i\geq 0$, and suppose that $R_i$ and $R_{i+1}$ are local rings with the maximal ideals $\m_i$ and $\m_{i+1}$, respectively. Suppose furthermore that $\ol{R_i}\coloneqq R_i/IR_i\neq 0$ and $\ol{R_{i+1}}\coloneqq R_{i+1}/IR_{i+1}\neq 0$.
  \begin{enumerate}
  \item The ring homomorphisms $t_i,\ol{t_i}$, and $F_i$ are local.
  \item Assume that the $i$-th Frobenius projection $F_i\colon\ol{R_{i+1}}\to \ol{R_i}$ is surjective.
    \begin{enumerate}
    \item $\m_i\ol{R_{i+1}}=(\m_{i+1}\ol{R_{i+1}})^{[p]}$, where $(-)^{[p]}$ denotes the Frobenius power.
    \item The closed fiber of $\Spec(R_{i+1})\to\Spec(R_i)$ is of dimension $0$.
    \end{enumerate}
  \end{enumerate}
\end{lemma}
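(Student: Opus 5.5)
Note that $I$ here means $I_0$, and that $I_0\subset\m_i$ and $I_0R_{i+1}\subset\m_{i+1}$, since $\ol{R_i}$ and $\ol{R_{i+1}}$ are nonzero.

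For (1), I would start with $\ol{t_i}$. By \cref{cor:piuh} (1) it is a universal homeomorphism of spectra. Both $\ol{R_i}$ and $\ol{R_{i+1}}$ are local, with maximal ideals $\m_i\ol{R_i}$ and $\m_{i+1}\ol{R_{i+1}}$. A homeomorphism sends the unique closed point to the unique closed point, so the contraction of $\m_{i+1}\ol{R_{i+1}}$ is $\m_i\ol{R_i}$, i.e.\ $\ol{t_i}$ is local. The same argument applied to $F_i$ via \cref{cor:piuh} (2) shows $F_i$ is local. Alternatively, $\ol{t_i}\circ F_i=\varphi$ is local, and $\ol{t_i}$ is injective. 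For $t_i$ itself: if $x\in\m_i$, then its image in $\ol{R_{i+1}}$ equals $\ol{t_i}(\bar x)$, which lies in $\m_{i+1}\ol{R_{i+1}}$. Since $I_0R_{i+1}\subset\m_{i+1}$, it follows that $t_i(x)\in\m_{i+1}$.

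For (2)(a), I would compute inside $\ol{R_{i+1}}$ and identify $\ol{R_i}$ with its image under $\ol{t_i}$.

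\emph{Inclusion $\supseteq$.} Let $y\in\m_{i+1}$. Then $\bar y^p=\varphi(\bar y)=\ol{t_i}(F_i(\bar y))$. Since $F_i$ is local, $F_i(\bar y)\in\m_i\ol{R_i}$, hence $\bar y^p\in\m_i\ol{R_{i+1}}$.

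\emph{Inclusion $\subseteq$.} Let $x\in\m_i$. By surjectivity of $F_i$, choose $\bar y\in\ol{R_{i+1}}$ with $F_i(\bar y)=\bar x$, so $\ol{t_i}(\bar x)=\bar y^p$. Locality of $F_i$, and the fact that a preimage of a nonunit is a nonunit, give $\bar y\in\m_{i+1}\ol{R_{i+1}}$. Hence the generators of $\m_i\ol{R_{i+1}}$ are $p$-th powers of elements of $\m_{i+1}\ol{R_{i+1}}$, and so $\m_i\ol{R_{i+1}}$ is contained in the Frobenius power. The two inclusions together give the equality.

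For (2)(b), the closed fiber is $\Spec(R_{i+1}/\m_iR_{i+1})$. Since $I_0\subset\m_i$, this ring equals $\ol{R_{i+1}}/\m_i\ol{R_{i+1}}=\ol{R_{i+1}}/(\m_{i+1}\ol{R_{i+1}})^{[p]}$ by (a). Every element of $\m_{i+1}$ has its $p$-th power in $(\m_{i+1}\ol{R_{i+1}})^{[p]}$, so $\m_{i+1}$ is nil modulo this ideal. The ring is local and its maximal ideal is nil, so its spectrum is a single point, of dimension $0$.

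The only real subtlety is (2)(b). One might try to argue via Noetherianity that $\m_{i+1}^N\subset\m_i R_{i+1}+I_0R_{i+1}$, but Noetherianity is not assumed. The nilpotence argument above avoids this, because dimension only depends on the underlying topological space.
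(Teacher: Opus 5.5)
Your proof is correct and follows essentially the paper's route. For (2)(a), the paper likewise uses locality and surjectivity of $F_i$ to obtain $\m_i\ol{R_i}=F_i(\m_{i+1}\ol{R_{i+1}})$ and then applies $\ol{t_i}$, using $\ol{t_i}\circ F_i=\varphi$. For (2)(b), it likewise reduces the closed fiber to $\ol{R_{i+1}}/\m_i\ol{R_{i+1}}$ and invokes (a); you additionally spell out why $\m_{i+1}$ becomes nil there.

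The only difference is in (1). The paper simply cites \cite[Lemma 3.10 (1)]{INS25}, whereas you give a self-contained argument via \cref{cor:piuh} (or the factorization $\varphi=\ol{t_i}\circ F_i$), which is valid.
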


\begin{proof}
(1) follows from \cite[Lemma 3.10 (1)]{INS25}.

(2) (a) It follows from (1) that $F_i^{-1}(\m_i\ol{R_i})=\m_{i+1}\ol{R_{i+1}}$. Since $F_i$ is surjective, we have $\m_i\ol{R_i}=F_i(\m_{i+1}\ol{R_{i+1}})$. Then $\m_i\ol{R_{i+1}}=(\m_{i+1}\ol{R_{i+1}})^{[p]}$, as claimed.

(b) The closed fiber of $\Spec(R_{i+1}) \to \Spec(R_i)$ is the same as that of $\Spec(\ol{R_{i+1}}) \to \Spec(\ol{R_i})$, which is of dimension $0$ by (a).
\end{proof}

Let us recall the definition of (pre)perfectoid towers.

\begin{definition}[{\cite[Definition 3.21]{INS25}, \cite[Definition 2.3]{Ha26a}}]
\label{def:perfectoidtower}
Let $(R,I_0)$ be a pair, and $\textrm{{\boldmath $R$}}=\{R_i,t_i\}_{i\geq 0}$ a tower of rings. We say that {\boldmath $R$} a \emph{perfectoid tower arising from $(R,I_0)$} if it is a purely inseparable tower arising from $(R,I_0)$ satisfying the following additional conditions.
  \begin{itemize}
  \item[\textbf{(d)}] For any $i\geq 0$, the $i$-th Frobenius projection $F_i\colon R_{i+1}/I_0R_{i+1}\to R_i/I_0R_i$ is surjective.
    \item[\textbf{(e)}] For any $i\geq 0$, $R_i$ is $I_0R_i$-adically Zariskian (i.e., $I_0R_i$ is contained in the Jacobson radical of $R_i$). 
  \item[\textbf{(f)}] $I_0$ is a principal ideal, and $R_1$ contains a principal ideal $I_1$ satisfying the following conditions.
    \begin{itemize}
    \item[\textbf{(f-1)}] $I_1^p=I_0R_1$.
    \item[\textbf{(f-2)}] For any $i\geq 0$, $\Ker(F_i)=I_1(R_{i+1}/I_0R_{i+1})$.
    \end{itemize}  
  \item[\textbf{(g)}] For any $i\geq 0$, $I_0(R_i)_{\textrm{$I_0$-}\mathrm{tor}}=(0)$. Moreover, there exists a bijection $(F_i)_{\mathrm{tor}}\colon (R_{i+1})_{\textrm{$I_0$-}\mathrm{tor}}\to (R_i)_{\textrm{$I_0$-}\mathrm{tor}}$ such that the diagram of sets
  \[
  \begin{tikzcd}[column sep=small]
  (R_{i+1})_{\textrm{$I_0$-}\mathrm{tor}} \dar["(F_i)_{\mathrm{tor}}"'] \rar[hookrightarrow] & R_{i+1} \rar[twoheadrightarrow] & R_{i+1}/I_0R_{i+1} \dar["F_i"] \\
  (R_i)_{\textrm{$I_0$-}\mathrm{tor}} \rar[hookrightarrow] & R_i \rar[twoheadrightarrow] & R_i/I_0R_i
  \end{tikzcd}
  \]
  commutes.
  \end{itemize}
A \emph{preperfectoid tower} is only required to satisfy all conditions except \textbf{(e)}.
\end{definition}

Let $\textrm{{\boldmath $R$}}=\{R_i\}_{i\geq 0}$ be a preperfectoid tower arising from a pair $(R,I_0)$. Then there exists a unique sequence of finitely generated ideals $\{I_i\subset R_i\}_{i\geq 2}$ such that $F_i(I_{i+1}(R_{i+1}/I_0R_{i+1}))=I_i(R_i/I_0R_i)$ for all $i\geq 0$. We call $I_i$ the \emph{$i$-th perfectoid pillar}.

Furthermore, to a preperfectoid tower $\textrm{{\boldmath $R$}}=\{R_i,t_i\}_{i\geq 0}$ arising from $(R,I_0)$ we can associate a tower of $\F_p$-algebras $\textrm{{\boldmath $R$}}^\flat=\{R_i^{s.\flat},t_i^{s.\flat}\}_{i\geq 0}$ called the \emph{tilt} of {\boldmath $R$}, as follows.
  \begin{itemize}
  \item For any $i\geq 0$, let
  \[
  R_i^{s.\flat}\stackrel{\mathrm{def}}{=} \varprojlim(\cdots \xr{F_{i+2}} R_{i+2}/I_0R_{i+2} \xr{F_{i+1}} R_{i+1}/I_0R_{i+1} \xr{F_i} R_i/I_0R_i).
  \]
  We call $R_i^{s.\flat}$ the \emph{$i$-th small tilt} (of {\boldmath $R$} associated to $(R,I_0)$).
  \item For any $i\geq 0$, let $t_i^{s.\flat}\colon R_i^{s.\flat}\to R_{i+1}^{s.\flat}$ be the unique ring homomorphism such that the diagram of rings
  \[
  \begin{tikzcd}
  R_i^{s.\flat} \rar["t_i^{s.\flat}"] \dar & R_{i+1}^{s.\flat} \dar \\
  R_{i+m}/I_0R_{i+m} \rar["\ol{t_{i+m}}"] & R_{i+m+1}/I_0R_{i+m+1}
  \end{tikzcd}
  \]
  commutes for all $m\geq 0$, where the vertical arrows are the $m$-th projections.
  \end{itemize}
For each $i\geq 0$, the \emph{small tilt} of the $i$-th perfectoid pillar $I_i$ is the kernel
\[
I_i^{s.\flat}\coloneqq \Ker(R_i^{s.\flat}\xr{\pr_0} R_i/I_0R_i \twoheadrightarrow R_i/I_i).
\]
In particular, the ideal $I_0^{s.\flat}\subset R^{s.\flat}$ is a counterpart of $I_0\subset R$ in the following sense:

\begin{theorem}[{\cite[Lemma 3.39, Proposition 3.41]{INS25}}]
\label{thm:INStilt}
Let $\textrm{{\boldmath $R$}}=\{R_i\}_{i\geq 0}$ be a perfectoid tower arising from a pair $(R,I_0)$.
  \begin{enumerate}
  \item For every $i\geq 0$, the $0$-th projection $R_i^{s.\flat} \to R_i/I_0R_i$ induces an isomorphism of $\F_p$-algebras
  \[
  R_i^{s.\flat}/I_0^{s.\flat}R_i^{s.\flat} \xr{\cong} R_i/I_0R_i.
  \]
  \item The tilt $\textrm{{\boldmath $R$}}^\flat=\{R_i^{s.\flat}\}_{i\geq 0}$ is a perfectoid tower arising from $(R^{s.\flat},I_0^{s.\flat})$.
  \end{enumerate}
\end{theorem}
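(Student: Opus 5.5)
Both assertions are cited from \cite[Lemma 3.39, Proposition 3.41]{INS25}; the plan below reconstructs the argument from the tower axioms.

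\textbf{Part (1).} Fix $i$ and write $\ol{R_j}=R_j/I_0R_j$. The $0$-th projection $\pr_0\colon R_i^{s.\flat}\to \ol{R_i}$ is surjective. Indeed, each $F_j$ is surjective by \textbf{(d)}, so any element of $\ol{R_i}$ lifts successively through the inverse system.

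Next I identify the kernel. Pick a generator $f_1$ of $I_1$. Using surjectivity of the Frobenius projections and the uniqueness of the perfectoid pillars $I_j$, choose a compatible system $f^\flat=(f_{j+1}\bmod I_0)_j\in R_i^{s.\flat}$, where $f_{j+1}$ generates the pillar at level $i+j+1$. This system is obtained by lifting the class of $f_1$ (suitably transported to level $i$) along the $F$'s. Then \textbf{(f-2)} gives $\Ker(F_j)=I_1\ol{R_{j+1}}$.

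The claim is that $I_0^{s.\flat}R_i^{s.\flat}$ equals $\Ker(\pr_0)$. By definition $I_0^{s.\flat}\subset R_0^{s.\flat}$ is the kernel of $R^{s.\flat}\to R/I_0$, so $I_0^{s.\flat}R_i^{s.\flat}\subset\Ker(\pr_0)$ via $t^{s.\flat}$. For the reverse inclusion, take $x=(x_j)_j$ with $x_0=0$. Because $\Ker(F_0)$ at each stage is generated by the pillar, I will show $x_j\in I_{\text{suitable}}\ol{R_{i+j}}$ compatibly. Then I divide by a fixed element $\varpi^\flat\in I_0^{s.\flat}$ whose $j$-th component generates $I_0$ at level $j$ raised to the appropriate power. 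Concretely, one needs the analogue of ``$\ker(\pr_0)=\varpi^\flat R^\flat$'' for perfectoid rings.

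The main obstacle is this divisibility. An element $x$ whose components lie in $p$-power pillar ideals must be a multiple of $\varpi^\flat$ in the limit, and the quotients must be chosen compatibly. I would handle this by using \textbf{(g)}: since $I_0$ kills the $I_0$-torsion, annihilators of pillar generators in $\ol{R_j}$ are controlled. This makes the quotient $x_j/\varpi_j$ well defined modulo a controlled ideal, so a Mittag-Leffler (compactness) argument on the inverse system produces a compatible choice.

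\textbf{Part (2).} I verify axioms \textbf{(a)}--\textbf{(g)} for $\{R_i^{s.\flat},t_i^{s.\flat}\}$ using (1).

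\begin{itemize}
\item \textbf{(a)} holds in characteristic $p$.
\item \textbf{(b)}, \textbf{(c)} and \textbf{(d)} transfer directly through the isomorphisms of (1), since the reduction of $t_i^{s.\flat}$ modulo $I_0^{s.\flat}$ corresponds to $\ol{t_i}$ and the Frobenius projection corresponds to $F_i$.
\item \textbf{(e)}: $R_i^{s.\flat}$ is $I_0^{s.\flat}$-adically complete, being an inverse limit in which $I_0^{s.\flat}$-powers correspond to the kernels of projections; hence it is Zariskian.
\item \textbf{(f)}: set $I_1^{s.\flat}$ to be the small tilt of the pillar. Its $p$-th power is $I_0^{s.\flat}R_1^{s.\flat}$, via the Frobenius relation among the components. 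The kernel condition transfers via (1).
\item \textbf{(g)}: define $(F_i)_{\mathrm{tor}}$ as the restriction of the inverse Frobenius-type shift on the limit. Check that $I_0^{s.\flat}$ kills the torsion by comparing with \textbf{(g)} for $\textrm{{\boldmath $R$}}$ componentwise.
\end{itemize}
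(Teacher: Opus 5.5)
The paper gives no proof here; the statement is quoted from \cite{INS25}, so I judge your outline on its own. Its architecture is sound: surjectivity of $\pr_0$ from \textbf{(d)}, a compatible system of pillar generators, identification of $\Ker\pr_0$, transport of \textbf{(b)}--\textbf{(d)} and \textbf{(f-2)} through (1), and the shift $R_{i+1}^{s.\flat}\cong R_i^{s.\flat}$, $(x_{i+1},x_{i+2},\dots)\mapsto(F_i(x_{i+1}),x_{i+1},\dots)$, as $(F_i)_{\mathrm{tor}}$. But the one nontrivial step, the divisibility, is not proved, and the reason you give for it is insufficient.

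Put $\varpi=(0,\overline{f_1},\overline{f_2},\dots)\in I_0^{s.\flat}$ with $F_j(\overline{f_{j+1}})=\overline{f_j}$, and write $x_j=\overline{f_j}z_j$. The possible $z_j$ form cosets of $\operatorname{Ann}_{\overline{R_j}}(\overline{f_j})$. There is no compactness, and Mittag-Leffler needs the images of these annihilators under iterated $F$ to stabilize; that does not follow from $I_0\cdot(R_j)_{\Iztor}=0$. What is needed:
\begin{enumerate}
\item[(i)] $f_j^{p^j}R_j=I_0R_j$. This follows from \textbf{(f-1)}, the relation $\overline{f_j}^{\,p^{j-1}}=\overline{t}(\overline{f_1})$, $p\in I_0$, and \textbf{(e)}.
\item[(ii)] $f_jT_j=0$ for $T_j=(R_j)_{\Iztor}$. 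This uses the \emph{bijection} in \textbf{(g)}: $T_j\cap I_0R_j=0$, $F^j$ is injective on $\overline{T_j}$, and $F^j(\overline{f_j}\tau)=F^j(\overline{f_j})F^j(\tau)=0$ for $\tau\in\overline{T_j}$.
\end{enumerate}
Together they give $\operatorname{Ann}_{\overline{R_j}}(\overline{f_j})=(f_j^{p^j-1}R_j+T_j)/I_0R_j$. Since $F_{j-1}(\overline{f_j})^{p^j-1}=0$, $F_{j-1}$ maps this ideal onto $\overline{T_{j-1}}$. Hence:
\begin{itemize}
\item $y_j:=F_j(z_{j+1})$ is well defined modulo $\overline{T_j}$, and these classes are $F$-compatible;
\item they lift to an honest element $y\in R^{s.\flat}$, because $F$ is bijective on $\overline{T}$;
\item $\overline{f_j}y_j=F_j(x_{j+1})=x_j$, because $\overline{f_j}\,\overline{T_j}=0$.
\end{itemize}
This gives $\Ker\pr_0=\varpi R^{s.\flat}$, hence principality of $I_0^{s.\flat}$, which \textbf{(f)} for the tilt requires but you never address. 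For $i\geq 1$, apply the same argument to the shifted tower $\{R_{i+j}\}_{j\geq 0}$. It is a perfectoid tower arising from $(R_i,I_0R_i)$ with first pillar $I_1R_{i+1}$, and its kernel generator $(0,\overline{f_{i+1}}^{\,p^i},\overline{f_{i+2}}^{\,p^i},\dots)$ is exactly the image of $\varpi$.

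The same analysis is missing from Part (2). In \textbf{(g)}, ``comparing componentwise'' has no content, because every component lies in $R_j/I_0R_j$, which is entirely $I_0$-torsion. Argue instead as follows. If $\varpi^{p^k}x=0$, then (i) gives $x_m\in\overline{f_m}^{\,p^m-p^k}\overline{R_m}+\overline{T_m}$ for $m\geq k$. Applying $F$ yields $x_j\in\overline{T_j}$ for all $j$, whence $\varpi x=0$ by (ii). In \textbf{(e)}, your completeness claim presupposes $\Ker\pr_j=\varpi^{p^j}R^{s.\flat}$, which is the same divisibility again. Zariskianness has a more direct proof: for $a\in\Ker\pr_0$ each component satisfies $a_m^{p^m}=\overline{t}(F^m(a_m))=0$. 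So $1+a$ is a unit componentwise, and hence a unit in the limit.
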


One can also show that $\textrm{{\boldmath $R$}}^\flat$ is a perfect tower (\cite[Proposition 3.10 (2)]{INS25}).

\begin{proposition}
\label{prop:towerLocal}
Let $\textrm{{\boldmath $R$}}=\{R_i\}_{i\geq 0}$ be a perfectoid tower arising from a pair $(R,I_0)$.
  \begin{enumerate}
  \item For any $i\geq 0$, $R_i$ is local if and only if so is $R_{i+1}$. In particular, if $R_i$ is local for some $i\geq 0$, then $R_i$ is local for any $i\geq 0$.
  \item For any $i\geq 0$, $R_i$ is local if and only if so is $R_i^{s.\flat}$.
  \end{enumerate}
\end{proposition}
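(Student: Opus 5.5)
The plan is to reduce both assertions to a single principle: a ring $A$ with $I_0A$ contained in its Jacobson radical is local if and only if $A/I_0A$ is local. Since $I_0R_i$ lies in the Jacobson radical of $R_i$ by condition \textbf{(e)}, the maximal ideals of $R_i$ all contain $I_0R_i$. Hence they correspond bijectively to the maximal ideals of $\ol{R_i}=R_i/I_0R_i$. So $R_i$ is local if and only if $\ol{R_i}$ is local. One small point needs care: $R_i$ is nonzero if and only if $\ol{R_i}$ is nonzero, which again follows from \textbf{(e)}, since a proper ideal in the Jacobson radical cannot be the unit ideal unless the ring is zero.

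For (1), I would use \cref{cor:piuh} (1). The map $\ol{t_i}\colon\ol{R_i}\to\ol{R_{i+1}}$ is a universal homeomorphism, so in particular $\Spec(\ol{R_{i+1}})\to\Spec(\ol{R_i})$ is a homeomorphism. Being local is a topological property of the spectrum: $\Spec$ is nonempty and has a unique closed point. Therefore $\ol{R_i}$ is local if and only if $\ol{R_{i+1}}$ is. Combining this with the first paragraph gives the equivalence for $R_i$ and $R_{i+1}$, and induction in both directions gives the ``in particular'' statement.

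For (2), \cref{thm:INStilt} (2) says that $\textrm{{\boldmath $R$}}^\flat$ is a perfectoid tower arising from $(R^{s.\flat},I_0^{s.\flat})$. By condition \textbf{(e)} for that tower, $I_0^{s.\flat}R_i^{s.\flat}$ lies in the Jacobson radical of $R_i^{s.\flat}$. The first paragraph then shows that $R_i^{s.\flat}$ is local if and only if $R_i^{s.\flat}/I_0^{s.\flat}R_i^{s.\flat}$ is local. By \cref{thm:INStilt} (1), this quotient is isomorphic to $\ol{R_i}=R_i/I_0R_i$, which is local if and only if $R_i$ is.

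The main obstacle is making sure the Zariskian condition \textbf{(e)} really holds for the tilted tower with respect to the ideal $I_0^{s.\flat}R_i^{s.\flat}$, and not some other ideal. This should be part of the content of \cref{thm:INStilt} (2), because perfectoid towers include \textbf{(e)} in their definition. If one wanted an independent check, $R_i^{s.\flat}$ is an inverse limit along surjections with kernels generated by the image of $I_0^{s.\flat}$. It is therefore $I_0^{s.\flat}$-adically complete, hence Zariskian. Everything else is formal.
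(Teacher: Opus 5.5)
Your proof is correct. Part (2) is essentially the paper's argument: condition \textbf{(e)} for the tilted tower (\cref{thm:INStilt}~(2)) combined with the isomorphism $R_i^{s.\flat}/I_0^{s.\flat}R_i^{s.\flat}\cong R_i/I_0R_i$ of \cref{thm:INStilt}~(1).

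Part (1) takes a different route. You compare $R_i/I_0R_i$ and $R_{i+1}/I_0R_{i+1}$ through the homeomorphism of spectra induced by $\overline{t_i}$ (\cref{cor:piuh}~(1)) and count closed points. The paper instead uses \textbf{(d)} and \textbf{(f-2)} to get a ring isomorphism $R_{i+1}/I_1R_{i+1}\cong R_i/I_0R_i$ induced by $F_i$. It combines this with the fact that $I_1R_{i+1}$ lies in the Jacobson radical of $R_{i+1}$, since $(I_1R_{i+1})^p=I_0R_{i+1}$.

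Your version uses only \textbf{(a)}--\textbf{(c)} and \textbf{(e)}. So it shows that locality propagates along any purely inseparable tower with $I_0$-adically Zariskian layers, without needing surjectivity of the Frobenius projections or the pillar $I_1$. The paper's version produces a genuine ring isomorphism between quotients by ideals inside the Jacobson radical. This makes (1) and (2) the same argument. More importantly, it lets the same template prove \cref{prop:towerNoeth}: for complete pairs, Noetherianity transfers along an isomorphism $A/I\cong B/J$, but not along a mere homeomorphism of spectra.

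Finally, your ``independent check'' of \textbf{(e)} for the tilt is unnecessary and a bit loose. The kernels in the defining inverse system are $I_1(R_{i+m+1}/I_0R_{i+m+1})$, which is not in general the image of $I_0^{s.\flat}$. So one would still need to compare the limit topology with the $I_0^{s.\flat}$-adic topology. Nothing in your proof depends on this, however.
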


\begin{proof}
(1) By condition \textbf{(f)} in \cref{def:perfectoidtower} (2), for any $i\geq 0$ the $i$-th Frobenius projection $F_i\colon R_{i+1}/I_0R_{i+1}\to R_i/I_0R_i$ induces an isomorphism $R_{i+1}/I_1R_{i+1}\xr{\cong}R_i/I_0R_i$. Since $R_{i+1}$ and $R_i$ are Zariskian with respect to the $I_1$-adic topology and the $I_0$-adic topology, respectively, we deduce the assertion.

(2) is similar, due to \cref{thm:INStilt} (1).
\end{proof}

\begin{remark}
The ``only if'' part of \cref{prop:towerLocal} (2) was already proved in \cite[Lemma 3.11 (2)]{INS25} by giving the maximal ideal of $R_i^{s.\flat}$ explicitly. The proof of \cref{prop:towerLocal} is based on a simple perspective of pairs.
\end{remark}

%

We say that a perfectoid tower $\textrm{{\boldmath $R$}}=\{R_i\}_{i\geq 0}$ arising from a pair $(R,I_0)$ is \emph{$I_0$-adically complete} (resp.\ \emph{Noetherian}, \emph{local}) if $R_i$ is $I_0$-adically complete (resp.\ Noetherian, local) for any $i\geq 0$.

\begin{proposition}
\label{prop:towerNoeth}
Let $\textrm{{\boldmath $R$}}=\{R_i\}_{i\geq 0}$ be an $I_0$-adically complete perfectoid tower arising from a pair $(R,I_0)$.
  \begin{enumerate}
  \item For any $i\geq 0$, $R_i$ is Noetherian if and only if so is $R_{i+1}$. In particular, if $R_i$ is Noetherian for some $i\geq 0$, then {\boldmath $R$} is Noetherian.
  \item For any $i\geq 0$, $R_i$ is Noetherian if and only if so is $R_i^{s.\flat}$.
  \end{enumerate}
\end{proposition}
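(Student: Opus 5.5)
The plan is to reduce Noetherianity of a ring that is $I$-adically complete with respect to a finitely generated ideal to Noetherianity of the quotient by that ideal. We use the standard criterion: if $A$ is $J$-adically complete for a finitely generated ideal $J$, and $A/J$ is Noetherian, then $A$ is Noetherian when $J/J^2$ is a finitely generated $A/J$-module. More generally, $A$ is Noetherian when $\mathrm{gr}_J(A)$ is Noetherian, and for finitely generated $J$ the graded ring is a quotient of a polynomial ring over $A/J$. The converse direction is trivial, since quotients of Noetherian rings are Noetherian. So in each part the work is to show that the relevant quotients agree and that each ring is complete with respect to a finitely generated ideal.

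For (1), we use the isomorphism $R_{i+1}/I_1R_{i+1}\cong R_i/I_0R_i$ induced by $F_i$ via \textbf{(f-2)} and \textbf{(d)}, as in the proof of \cref{prop:towerLocal}. Suppose first that $R_i$ is Noetherian. Then $R_{i+1}/I_1R_{i+1}$ is Noetherian. Moreover, $R_{i+1}$ is $I_0$-adically complete, and $I_1^p=I_0R_1$ by \textbf{(f-1)}. Hence the $I_1R_{i+1}$-adic and $I_0R_{i+1}$-adic topologies coincide, so $R_{i+1}$ is $I_1R_{i+1}$-adically complete with $I_1$ principal. The completeness criterion then gives that $R_{i+1}$ is Noetherian.

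Conversely, suppose $R_{i+1}$ is Noetherian. Then $R_i/I_0R_i\cong R_{i+1}/I_1R_{i+1}$ is Noetherian, and $R_i$ is $I_0$-adically complete with $I_0$ principal. Applying the same criterion, $R_i$ is Noetherian. Induction in both directions then gives the ``in particular'' statement.

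For (2), we use \cref{thm:INStilt} (1): $R_i^{s.\flat}/I_0^{s.\flat}R_i^{s.\flat}\cong R_i/I_0R_i$. It remains to check that $I_0^{s.\flat}$ is finitely generated (indeed principal) and that $R_i^{s.\flat}$ is $I_0^{s.\flat}$-adically complete. By \cref{thm:INStilt} (2), the tilt is again a perfectoid tower, so \textbf{(f)} makes $I_0^{s.\flat}$ principal. For completeness, $R_i^{s.\flat}$ is an inverse limit along the $F$'s. Since $\Ker$ of the $m$-th composite projection should be $(I_0^{s.\flat})^{p^m}$-type powers, the inverse limit description exhibits $R_i^{s.\flat}$ as complete with respect to $I_0^{s.\flat}$; this is the standard argument that the tilt is complete. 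With these facts in hand, the same criterion from the first paragraph gives both directions of (2).

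The main obstacle is the completeness of the small tilt with respect to $I_0^{s.\flat}$, i.e.\ identifying the kernels of the projections $R_i^{s.\flat}\to R_{i+m}/I_0R_{i+m}$ with powers of $I_0^{s.\flat}$. The first approach to try is to show by induction on $m$, using \textbf{(f-2)} repeatedly, that $\Ker(\pr_m)=(I_0^{s.\flat})^{p^m}R_i^{s.\flat}$. This would realize $R_i^{s.\flat}=\varprojlim R_i^{s.\flat}/(I_0^{s.\flat})^{p^m}$. The Noetherian criterion itself, that completeness plus a Noetherian quotient by a finitely generated ideal implies Noetherian, is standard and can be cited.
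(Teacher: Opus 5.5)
Your proposal is correct and is essentially the paper's own argument. The paper likewise combines the isomorphisms $R_{i+1}/I_1R_{i+1}\cong R_i/I_0R_i$ (from \textbf{(d)} and \textbf{(f-2)}, as in \cref{prop:towerLocal}) and $R_i^{s.\flat}/I_0^{s.\flat}R_i^{s.\flat}\cong R_i/I_0R_i$ (\cref{thm:INStilt} (1)) with the Bourbaki criterion that a ring complete with respect to a finitely generated ideal is Noetherian as soon as its quotient is. For the completeness step you flag, a bare induction on \textbf{(f-2)} is not the natural route. The identity $\Ker(\pr_m)=(I_0^{s.\flat})^{p^m}R_i^{s.\flat}$ follows directly from \cref{thm:INStilt} (1) applied to $R_{i+m}^{s.\flat}$ through the shift isomorphism $R_i^{s.\flat}\cong R_{i+m}^{s.\flat}$ (drop the first $m$ terms of the inverse system), under which $t^{s.\flat}_{i+m-1}\circ\cdots\circ t^{s.\flat}_i$ becomes $\varphi^m$.
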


The proposition can be verified by an argument similar to that in the proof of \cref{prop:towerLocal}, with the aid of the following lemma, which follows from \cite[Chap.\ III, \S2.10, Corollary 5]{BouAC}.

\begin{lemma}
Let $(A,I)$ and $(B,J)$ be complete pairs such that $I$ and $J$ are finitely generated and there exists a ring isomorphism $A/I\xr{\cong}B/J$. Then $A$ is Noetherian if and only if so is $B$.
\end{lemma}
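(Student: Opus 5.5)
The lemma follows from the Bourbaki criterion once we reduce to the associated graded rings. The criterion we use is: if $A$ is $I$-adically complete (Hausdorff), $I$ is finitely generated, and $\mathrm{gr}_I(A)=\bigoplus_{n\ge0}I^n/I^{n+1}$ is Noetherian, then $A$ is Noetherian. Conversely, if $A$ is Noetherian then $A/I$ is Noetherian, and $\mathrm{gr}_I(A)$ is a quotient of a polynomial ring over $A/I$. So we need to relate Noetherianity of $A$ to that of $A/I$ plus finiteness data carried by $I$.

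Here is the plan. Suppose $A$ is Noetherian; then $A/I\cong B/J$ is Noetherian. Since $J=(g_1,\dots,g_r)$ is finitely generated, the surjection
\[
(B/J)[X_1,\dots,X_r]\twoheadrightarrow \mathrm{gr}_J(B),\qquad X_k\mapsto \bar g_k\in J/J^2,
\]
exhibits $\mathrm{gr}_J(B)$ as a quotient of a polynomial ring in finitely many variables over a Noetherian ring. By Hilbert's basis theorem $\mathrm{gr}_J(B)$ is Noetherian. Since $B$ is $J$-adically complete and Hausdorff, Bourbaki's corollary (Chap.~III, \S2.10, Cor.~5) then gives that $B$ is Noetherian. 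The converse is symmetric.

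The main obstacle is checking that the cited corollary applies in exactly this form, with completeness meaning Hausdorff complete as in the paper's conventions. The key step inside Bourbaki's argument is lifting generators of homogeneous ideals of $\mathrm{gr}_J(B)$ to generators of ideals of $B$ via successive approximation. That lift is where completeness is used: each ideal $\mathfrak a\subset B$ is closed, so the approximation converges inside $\mathfrak a$. I would simply quote the corollary rather than redo this. Note also that the hypothesis that $I$ is finitely generated is not actually needed for the direction where $A$ is assumed Noetherian, since Noetherianity of $A$ only enters through $A/I$. The finite generation of $J$, however, is essential for the polynomial presentation above.
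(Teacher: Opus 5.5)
Your proposal is correct and follows the paper's route. You transfer Noetherianity across $A/I\cong B/J$ and then lift it to $B$ via the cited Bourbaki corollary (Chap.~III, \S2.10, Cor.~5), which the paper simply quotes. Your unpacking through the surjection $(B/J)[X_1,\dots,X_r]\twoheadrightarrow\mathrm{gr}_J(B)$, followed by the complete-Hausdorff criterion ``$\mathrm{gr}$ Noetherian $\Rightarrow$ ring Noetherian,'' is just the standard proof of that corollary, so nothing is missing.
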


\begin{remark}
Again, \cref{prop:towerNoeth} (2) was already proved in \cite[Proposition 3.42 (2)]{INS25}. \cref{prop:towerNoeth} just says that the same proof works for adjacent rings $R_i$ and $R_{i+1}$.
\end{remark}

Let us give some examples of perfectoid towers relevant to this paper.

\begin{example}
\label{ex:perfectoidtower}
  \begin{enumerate}
  \item \emph{Perfect towers}. A tower of rings is a perfectoid tower arising from $(R,0)$ if and only if it is a perfect tower. For the proof, see \cite[Lemma 3.24]{INS25}.
  \item \emph{Complete regular local rings}. Let $(R,\m,k)$ be a complete regular local ring of mixed characteristic $(0,p)$. By Cohen's structure theorem, we can write $R=C(k)\llbracket x_1,\ldots,x_d\rrbracket/(p-f)$, where $C(k)$ is the Cohen ring of $k$\footnote{The \emph{Cohen ring} of a field $k$ of characteristic $p>0$ is a unique, up to isomorphism, complete discrete valuation ring $C$ with uniformizer $p$ such that $C/pC=k$.} and $f\in(x_1,\ldots,x_d)$. Fix an algebraic closure of the field of fractions of $R$, and take for each $1\leq j\leq d$ a compatible system of $p$-power roots $\{x_j^{1/p^i}\}_{i\geq 0}$ of $x_j$. Then
  \[
  C(k)\llbracket x_1,\ldots,x_d\rrbracket/(p-f) \hookrightarrow C(k^{1/p})\llbracket x_1^{1/p},\ldots,x_d^{1/p}\rrbracket/(p-f) \hookrightarrow \cdots \hookrightarrow C(k^{1/p^i})\llbracket x_1^{1/p^i},\ldots,x_d^{1/p^i}\rrbracket \hookrightarrow \cdots
  \]
  is a perfectoid tower arising from $(R,p)$, and its tilt is isomorphic to
  \[
  k\llbracket x_1,\ldots,x_d\rrbracket \hookrightarrow k^{1/p}\llbracket x_1^{1/p},\ldots,x_d^{1/p}\rrbracket \hookrightarrow \cdots \hookrightarrow k^{1/p^i}\llbracket x_1^{1/p^i},\ldots,x_d^{1/p^i}\rrbracket \hookrightarrow \cdots.
  \]
  We often write $x_j^{s.\flat}$ instead of $x_j$ to emphasize its correspondence with $x_j$ ($1\leq j\leq d$). For the proof, see \cite[Example 3.62 (1)]{INS25} or \cite[Example 6.1]{Ish26}.
  \item \emph{Complete local log-regular rings}. More generally, let $(R,Q,\alpha)$ be a complete local log-regular ring of mixed characteristic $(0,p)$ with the underlying Noetherian local ring $R$ with residue field $k$. By Kato's structure theorem (\cite[Theorem 2.22]{INS25}), we can write $R=C(k)\llbracket Q\oplus\N^{\oplus r}\rrbracket/(p-f)$, where $r\in\N$ and $f\in C(k)\llbracket Q\oplus \N^{\oplus r}\rrbracket$ has constant term zero. For each $i\geq 0$, let $Q^{(i)}\coloneqq \{\gamma\in Q^{\mathrm{gp}}\mid p^i\gamma\in Q\}$, and define $(\N^{\oplus r})^{(i)}$ similarly. Then
  \[
  C(k)\llbracket Q\oplus\N^{\oplus r}\rrbracket/(p-f)\hookrightarrow C(k^{1/p})\llbracket Q^{(1)}\oplus(\N^{\oplus r})^{(1)}\rrbracket/(p-f) \hookrightarrow \cdots \hookrightarrow C(k^{1/p^i})\llbracket Q^{(i)}\oplus(\N^{\oplus r})^{(i)}\rrbracket/(p-f) \hookrightarrow \cdots
  \]
  is a perfectoid tower arising from $(R,p)$, and its tilt is isomorphic to
  \[
  k\llbracket Q\oplus\N^{\oplus r}\rrbracket \hookrightarrow k^{1/p}\llbracket Q^{(1)}\oplus(\N^{\oplus r})^{(1)}\rrbracket \hookrightarrow \cdots \hookrightarrow k^{1/p^i}\llbracket Q^{(i)}\oplus(\N^{\oplus r})^{(i)}\rrbracket \hookrightarrow \cdots.
  \]
  For the proof, see \cite[Proposition 3.58]{INS25} or \cite[Example 6.2]{Ish26}.
  \item \emph{Modifications in characteristic $p$}. One can construct new (pre)perfectoid towers from old ones by changing their special fibers: if $\textrm{{\boldmath $S$}}=\{S_i,u_i\}_{i\geq 0}$ is a (pre)perfectoid tower arising from a pair $(S,J_0)$, and $R'\to (S/J_0S)_\red$ is any homomorphism of reduced $\F_p$-algebras, then the tower obtained by taking fiber products
\[
R\coloneqq R'\times_{(S/J_0S)_\red} S \to (R')^{1/p}\times_{(S_1/J_0S_1)_\red} S_1 \to \cdots \to (R')^{1/p^i}\times_{(S_i/J_0S_i)_\red} S_i\to \cdots
\]
is a (pre)perfectoid tower arising from $(R,I_0)$, where $I_0=(0)R'\times_{(0)(S/J_0S)_\red}J_0$. For the proof, see \cite[Remark 3.18]{Ha26a}.
  \item \emph{Localizations}. If $\textrm{{\boldmath $R$}}=\{R_i,t_i\}_{i\geq 0}$ is a perfectoid tower arising from a pair $(R,I_0)$ and $S\subset R$ a multiplicative subset with $I_0\cap S=\emptyset$, then $S^{-1}\textrm{{\boldmath $R$}}=\{S^{-1}R_i, S^{-1}t_i\}_{i\geq 0}$ is a perfectoid tower arising from $(S^{-1}R, I_0(S^{-1}R))$. This is a special case of the \'{e}tale base change stability (\cite[Theorem 3.28]{Ha26a} or \cite[Theorem 3.11]{Ha26b}).
  \end{enumerate}

See also \cite{Ish26,IS25} for other constructions and examples of perfectoid towers. As in all these constructions, one can typically take $I_0=(0)$ or $(p)$. Via ramification theory, \cite[\S4]{HIS26} provides constructions where $I_0\neq (0),(p)$.
Later in \cref{s:const}, we will see further constructions where $I_0\neq (0),(p)$.
\end{example}

\begin{remark}
Let $\textrm{{\boldmath $R$}}=\{R_i\}_{i\geq 0}$ be a (pre)perfectoid tower arising from a pair $(R,I_0)$. Then by \cite[Theorem 3.1]{Ha26a} the maximal $I_0$-torsion free quotients $\wt{R_i}\coloneqq R_i/(R_i)_{\Iztor}$ form a (pre)perfectoid tower $\wt{\textrm{{\boldmath $R$}}}=\{\wt{R_i}\}_{i\geq 0}$ arising from $(\wt{R},I_0\wt{R})$ whose tilt consists of the maximal $I_0^{s.\flat}$-torsion free quotients $R_i^{s.\flat}/(R_i^{s.\flat})_{\Izstor}$. Moreover, we have decompositions into fiber products
\[
R_i \xr{\cong} \wt{R_i} \times_{\wt{R_i}/I_0\wt{R_i}} (R_i/I_0R_i),\quad R_i^{s.\flat} \xr{\cong} (\wt{R_i})^{s.\flat}\times_{\wt{R_i}/I_0\wt{R_i}}(R_i/I_0R_i).
\]
Note that the decompositions hold if we replace $R_i/I_0R_i$ and $\wt{R_i}/I_0\wt{R_i}$ by $(R_i/I_0R_i)_{\mathrm{red}}$ and $(\wt{R_i}/I_0\wt{R_i})_{\mathrm{red}}$, respectively. This means that arbitrary perfectiod towers is constructed by modifying $p$-torsion free perfectoid towers by the procedure of \cref{ex:perfectoidtower} (4).
\end{remark}

\section{Main results}
\label{s:regular}

In this section, we prove the results on regular rings and perfectoid towers stated in the introduction.

\subsection{Flatness}

We first study the flatness of morphisms of perfectoid towers.

Let $\textrm{{\boldmath $R$}}=\{R_i,t_i\}_{i\geq 0}$ and $\textrm{{\boldmath $S$}}=\{S_i,u_i\}_{i\geq 0}$ be preperfectoid towers arising from $(R,I_0)$ and $(S,J_0)$, respectively.
Let $\textrm{{\boldmath $\alpha$}}=\{\alpha_i\}_{i\geq 0}\colon \textrm{{\boldmath $R$}} \to \textrm{{\boldmath $S$}}$ be a morphism of towers of rings such that $\alpha_0(I_0)\subset J_0$. Then there exists a unique morphism of towers of rings $\textrm{{\boldmath $\alpha$}}^\flat=\{\alpha_i^{s.\flat}\}_{i\geq 0}\colon \textrm{{\boldmath $R$}}^\flat \to \textrm{{\boldmath $S$}}^\flat$ such that the diagram
\[
\begin{tikzcd}[column sep=large]
R_i^{s.\flat} \rar["\alpha_i^{s.\flat}"] \dar["\pr_m"'] & S_i^{s.\flat} \dar["\pr_m"] \\
R_{i+m}/I_0R_{i+m} \rar["\ol{\alpha_{i+m}}"] & S_{i+m}/J_0S_{i+m}
\end{tikzcd}
\]
commutes for any $i,m\geq 0$ (\cite[Definition 2.12]{Ha26a}). We call $\textrm{{\boldmath $\alpha$}}^\flat$ the \emph{tilt of {\boldmath $\alpha$}} (\emph{associated to $(R,I_0)$ and $(S,J_0)$}).

Here we include the following fact on flatness quoted from \cite{GR}.

\begin{lemma}[{\cite[Corollary 3.4.22]{GR}, \cite[\href{https://stacks.math.columbia.edu/tag/08KQ}{Tag 08KQ}]{stacks-project}}]
\label{lem:flatglue}
Consider a commutative diagram of rings
\[
\begin{tikzcd}[sep=small]
 & A' \ar[rr] \ar[dd] & & A'_2 \ar[dd] \\
A \ar[rr,crossing over] \ar[dd] \ar[ru] & & A_2 \ar[dd] \ar[ru] & \\
 & A'_1 \ar[rr] & & B' \\
A_1 \ar[rr,"f_1"'] \ar[ru] & & B \ar[ru] \ar[from=2-3,crossing over]
\end{tikzcd}
\]
where $f_1$ is surjective, the front and back faces are cartesian, and the bottom and right faces are cocartesian. Then $A\to A'$ is flat if and only if $A_1\to A'_1$ and $A_2\to A'_2$ are flat.
\end{lemma}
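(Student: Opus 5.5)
The plan is to reduce the statement to Milnor patching for modules over the fiber product $A=A_1\times_BA_2$, which is available because $f_1$ is surjective. Throughout I view $A'$, $A'_1$, $A'_2$, $B'$ as $A$-modules via the maps in the cube.

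\textbf{The ``if'' direction.} Assume $A_1\to A'_1$ and $A_2\to A'_2$ are flat.
\begin{enumerate}
\item Put $M_1\coloneqq A'_1$ and $M_2\coloneqq A'_2$. These are flat over $A_1$ and $A_2$, respectively.
\item The bottom and right faces are cocartesian, so
\[
M_1\otimes_{A_1}B\cong B'\cong M_2\otimes_{A_2}B .
\]
Hence $(M_1,M_2,\mathrm{can})$ is a gluing datum for the Milnor square $A\to A_1,A_2\to B$.
\item By Milnor's patching theorem, in the version for flat modules (\cite[\href{https://stacks.math.columbia.edu/tag/08KQ}{Tag 08KQ}]{stacks-project}), the fiber product $M\coloneqq M_1\times_{B'}M_2$ is a flat $A$-module. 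Moreover $M\otimes_AA_j\cong M_j$ for $j=1,2$.
\item The back face is cartesian, so $M=A'$. Therefore $A\to A'$ is flat.
\end{enumerate}
The inputs needed for this direction are that $A_1\to B$ is surjective (which makes $A_1\otimes_{A}(-)$ behave well on fiber products) and the compatibility of the gluing isomorphism with the ring structures. The latter is automatic because all the maps are ring maps.

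\textbf{The ``only if'' direction.} Assume $A\to A'$ is flat. It suffices to prove that the natural maps
\[
A'\otimes_AA_j\to A'_j\qquad (j=1,2)
\]
are isomorphisms, since flatness is stable under base change.
\begin{enumerate}
\item Tensor the exact sequence of $A$-modules $0\to A\to A_1\times A_2\to B\to 0$ with the flat module $A'$. This gives the exact sequence
\[
0\to A'\to (A'\otimes_AA_1)\times(A'\otimes_AA_2)\to A'\otimes_AB\to 0 .
\]
\item Compare it with $0\to A'\to A'_1\times A'_2\to B'\to 0$. This second sequence is exact because the back face is cartesian, and $A'_1\to B'$ is surjective as a base change of $f_1$ along the bottom face.
\item Set $N\coloneqq A'\otimes_AA_1$ and $N_2\coloneqq A'\otimes_AA_2$. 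Together with $N\otimes_{A_1}B\cong A'\otimes_AB$, these form the gluing datum attached to the flat $A$-module $A'$.
\item By the equivalence of categories in Milnor patching, $A'$ is also recovered from the datum $(A'_1,A'_2,B')$. I would therefore reduce to showing that the comparison map of gluing data is an isomorphism.
\item To do this, first show $A'\otimes_AB\to B'$ is an isomorphism. Since $B'=A'_1\otimes_{A_1}B$, I would compute the kernels $I\coloneqq\Ker(A\to A_2)\cong\Ker(f_1)$ and $\Ker(A'\to A'_2)\cong\Ker(A'_1\to B')$. Flatness of $A'$ gives $I\otimes_AA'\cong IA'$, and the cartesian back face identifies $IA'$ with $\Ker(A'_1\to B')$.
\item A five-lemma argument on the two exact sequences then propagates the isomorphism to the $j=1,2$ components.
\end{enumerate}

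\textbf{Main obstacle.} I expect step 5 of the ``only if'' direction to be the hardest. It requires checking that the cocartesian bottom and right faces give the base-change compatibility exactly, without extra kernel terms. If this turns out to be delicate, I would fall back on citing \cite[Corollary 3.4.22]{GR} for that step.
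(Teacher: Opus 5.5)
The paper gives no proof of this lemma; it only cites Gabber--Ramero and the Stacks Project. Your ``if'' direction is fine: flat Milnor patching over $A=A_1\times_BA_2$ (with $f_1$ surjective), plus the cartesian back face identifying $A'_1\times_{B'}A'_2$ with $A'$. Steps 1, 2 and 6 of the ``only if'' direction are also fine. Once $A'\otimes_AB\to B'$ is known to be an isomorphism, the five lemma makes $(A'\otimes_AA_1)\times(A'\otimes_AA_2)\to A'_1\times A'_2$ an isomorphism, hence each component is one, and flatness of $A_j\to A'_j$ follows by base change.

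\textbf{The gap is step 5.} Put $J=\ker(f_1)$ and let $P\subset A'_1$ be the image of $A'\to A'_1$. The cartesian back face and cocartesian bottom face give $\ker(A'\to A'_2)=JA'_1\times 0$, whereas $IA'=JP\times 0$. Neither the cartesian back face nor flatness of $A'$ forces $JP=JA'_1$; flatness only gives $I\otimes_AA'\cong IA'$, which says nothing about which submodule $IA'$ is.

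Here is an example. Take $A_1=A=k[x]$, $A_2=B=k$ with $f_1(x)=0$, and $A'_1=k[x,y]$, $A'_2=k$, $B'=k[y]$. The back face is cartesian with $A'=k+xk[x,y]$, the bottom face is cocartesian, and $A\to A'$ is flat. Yet $xy\in\ker(A'\to A'_2)$ while $xy\notin IA'=xk+x^2k[x,y]$. What fails here is the cocartesian right face, and that hypothesis never enters your ``only if'' argument. Also, your ``since $B'=A'_1\otimes_{A_1}B$'' suggests computing $A'\otimes_AB$ through $A_1$. That would presuppose $A'\otimes_AA_1\cong A'_1$, which is what you are trying to prove.

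\textbf{How to close it.}
\begin{enumerate}
\item Since $B'\cong A'_2\otimes_{A_2}B$, it is generated as a ring by the images of $A'_2$ and $B$.
\item Because $A'\to A'_2$ and $f_1$ are surjective, the subring of $A'_1$ generated by $P$ and the image of $A_1$ maps onto $B'$.
\item That subring also contains $\ker(A'_1\to B')=JA'_1\subseteq P$, so it is all of $A'_1$. Thus every element of $A'_1$ is a sum $\sum a_kp_k$ with $a_k$ in the image of $A_1$ and $p_k\in P$.
\item Hence $JA'_1=JP$, so $IA'=\ker(A'\to A'_2)$, and $A'\otimes_AA_2=A'/IA'\cong A'_2$. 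No flatness is used here.
\item Then $A'\otimes_AB\cong A'_2\otimes_{A_2}B\cong B'$ by the right face, and your step 6 finishes the proof.
\end{enumerate}
Flatness of $A'$ is needed only for exactness of your upper row.
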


We will apply \cref{lem:flatglue} to the following situation.

\begin{lemma}
\label{lem:flatglue2}
Let $(R,I)$ be a pair, and $S$ an $R$-algebra. Let $\wt{R}\coloneqq R/R_{\Itor}$ and $\wt{S}\coloneqq S/S_{\Itor}$. Consider the commutative diagram of rings
\[
\begin{tikzcd}[column sep=tiny, row sep=small]
 & S \ar[rr] \ar[dd] & & \wt{S} \ar[dd] \\
R \ar[rr,crossing over] \ar[dd] \ar[ru] & & \wt{R} \ar[dd] \ar[ru] & \\
 & S/IS \ar[rr] & & \wt{S}/I\wt{S} \\
R/I \ar[rr] \ar[ru] & & \wt{R}/I\wt{R} \ar[ru] \ar[from=2-3,crossing over]
\end{tikzcd}
\]
  \begin{enumerate}
  \item The right face is always cocartesian.
  \item If $R_{\Itor}\cap I=(0)$ \emph{(}resp.\ $S_{\Itor}\cap IS=(0)$\emph{)}, then the front \emph{(}resp.\ back\emph{)} face is cartesian.
  \item If $R\to S$ is flat, then the bottom face is cocartesian.
  \end{enumerate}
\end{lemma}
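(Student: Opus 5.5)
Each of the three assertions reduces to an elementary check on the definitions of $\wt{R}$ and $\wt{S}$. The only real input is that flatness preserves the $I$-torsion submodule, which is needed for (3).

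For (1), we must show that $\wt{S}/I\wt{S}\cong \wt{R}/I\wt{R}\otimes_{\wt{R}}\wt{S}$. This is immediate from the isomorphism $\wt{R}/I\wt{R}\otimes_{\wt{R}}\wt{S}\cong \wt{S}/I\wt{S}$, valid for any $\wt{R}$-algebra $\wt{S}$. Here $\wt{R}\to\wt{S}$ is well defined because $R\to S$ sends $I$-torsion elements to $I$-torsion elements: if $a^nx=0$ then $a^n\cdot x=0$ in $S$. No hypothesis is needed.

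For (2), consider the front face, with the claim that $R\to \wt{R}\times_{\wt{R}/I\wt{R}} R/I$ is bijective.

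Injectivity. If $x$ maps to zero, then $x\in R_{\Itor}\cap I=(0)$.

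Surjectivity. Take a pair $(\bar y,\bar z)$ with $y\in R$ and $z\in R$ lifting the two components, agreeing in $\wt{R}/I\wt{R}=R/(I+R_{\Itor})$. Then $y-z=i+t$ with $i\in I$ and $t\in R_{\Itor}$. Put $x=y-t=z+i$. Then $x\equiv y$ modulo $R_{\Itor}$ and $x\equiv z$ modulo $I$, so $x$ maps to $(\bar y,\bar z)$.

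The back face is handled identically, with $IS$ in place of $I$.

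For (3), the claim is that $S/IS\otimes_{R/I}\wt{R}/I\wt{R}\to \wt{S}/I\wt{S}$ is an isomorphism. The source is $S/(IS+R_{\Itor}S)$ and the target is $S/(IS+S_{\Itor})$, so it suffices to show $R_{\Itor}S=S_{\Itor}$ when $R\to S$ is flat. The inclusion $R_{\Itor}S\subset S_{\Itor}$ is automatic.

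The reverse inclusion is the main step. Write $R_{\Itor}=\varinjlim_{J} \Hom_R(R/J,R)$, the colimit over finitely generated ideals $J\subset I$ raised to powers. More precisely, one needs care here, since $I$-torsion means that each element $a\in I$ has some power killing $x$, not that a power of $I$ does.

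Handle this as follows. Take $x\in S_{\Itor}$ and first reduce to finitely many generators of $I$. If $I$ is not finitely generated, the torsion condition for all $a\in I$ can still be tested elementwise. Note that $M_{\Itor}=\bigcap_{a\in I} M_{a\text{-tor}}$, and $M_{a\text{-tor}}=\bigcup_n \Ker(a^n\colon M\to M)$. Flatness gives $\Ker(a^n\colon S\to S)=\Ker(a^n\colon R\to R)\otimes_R S$, hence $S_{a\text{-tor}}=R_{a\text{-tor}}\otimes_RS$.

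Intersections pose a difficulty for infinitely many $a$. The correct remedy is to use that flat base change commutes with finite intersections of submodules. For finitely many $a_1,\dots,a_r$ generating a finitely generated subideal $J$, this gives $S_{J\text{-tor}}=R_{J\text{-tor}}\otimes_R S$. In the situation used later $I$ is finitely generated (indeed $I_0$ is principal), so $J=I$ suffices. In general one passes to the intersection over $J$ using that $x$ is $J$-torsion for every $J$.

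I expect this final reduction from arbitrary $I$ to finitely generated subideals to be the main obstacle. If it fails in general, I would assume $I$ finitely generated, which covers all applications.
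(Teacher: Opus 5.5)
Your proofs of (1) and (2) are correct. In (2) you verify directly the general isomorphism $R/(\mathfrak a\cap\mathfrak b)\cong R/\mathfrak a\times_{R/(\mathfrak a+\mathfrak b)}R/\mathfrak b$, which the paper simply cites from its reference. For (3) you follow the paper's route exactly. The paper's whole proof is the unproved assertion that $\wt{R}\otimes_RS\to\wt{S}$ is an isomorphism for flat $R\to S$, i.e.\ $R_{\Itor}S=S_{\Itor}$. When $I=(a_1,\dots,a_r)$ is finitely generated, your argument supplies that justification: flat base change commutes with $\Ker(a^n)$, with filtered unions and with finite intersections. You should also state explicitly that $M_{\Itor}=\bigcap_j M_{a_j\text{-tor}}$, since $a_j^nx=0$ for all $j$ forces $I^{r(n-1)+1}x=0$.

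Your doubt about the general case is well founded, and it is the statement, not your proof, that fails. The infinite intersection does not commute with $-\otimes_RS$, and (3) is false without finite generation. Take a field $k$, $R=k[x,s,a_1,a_2,\ldots]/(s^ia_ix : i\geq 1)$, $I=(a_1,a_2,\ldots)$, and the flat $R$-algebra $S=R[1/s]$.
\begin{itemize}
\item The monomials not divisible by any $s^ia_ix$ form a $k$-basis of $R$. An element $f\in R$ is killed by a power of $a_j$ if and only if every basis monomial of $f$ is divisible by $s^jx$. Hence $R_{\Itor}=0$ and $\wt{R}=R$.
\item In $S$ we have $a_ix=s^{-i}\cdot s^ia_ix=0$ for all $i$, so $x\in S_{\Itor}$.
\item On the other hand $x\notin IS$. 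Otherwise $s^mx\in(a_1,a_2,\ldots)$ in the polynomial ring for some $m$, because the defining ideal of $R$ lies in $(a_1,a_2,\ldots)$; this is impossible.
\end{itemize}
Thus $\wt{R}/I\wt{R}\otimes_{R/I}S/IS=S/IS\to\wt{S}/I\wt{S}$ kills the nonzero class of $x$, so the bottom face is not cocartesian. Note that $x=s^{-n}\cdot s^nx\in R_{\textrm{$J_n$-}\mathrm{tor}}S$ for every $J_n=(a_1,\ldots,a_n)$, which is exactly where your passage to the intersection over $J$ breaks down.

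So your fallback to finitely generated $I$ is a hypothesis the lemma genuinely needs. It costs nothing in the application \cref{prop:tilting flat}, where the ideals involved ($I_0$, and $I_0^{s.\flat}$ on the tilted side) are principal.
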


\begin{proof}
(1) Obvious.

(2) follows from \cite[Lemma 3.5]{Ha26a}.

(3) If $R\to S$ is flat, then the canonical map $\wt{R}\otimes_RS \to \wt{S}$ is an isomorphism, and thus we obtain the desired isomorphism $\wt{R}/I\wt{R} \otimes_{R/I} (S/IS) \cong \wt{R}\otimes_RS\otimes_R(R/I) \xr{\cong} \wt{S}\otimes_R(R/I) \xr{\cong} \wt{S}/I\wt{S}$.
\end{proof}

Now we have the promised result.

\begin{proposition}
\label{prop:tilting flat}
Let $\textrm{{\boldmath $R$}}=\{R_i\}_{i\geq 0}\to \textrm{{\boldmath $S$}}=\{S_i\}_{i\geq 0}$ be a morphism of towers of rings, where {\boldmath $R$} and {\boldmath $S$} are preperfectoid towers arising from pairs $(R,I_0)$ and $(S,J_0)$. Suppose that $J_0=I_0S$ and $J_1=I_1S$. Fix $i\geq 0$. Suppose that either one of the following conditions is satisfied.
  \begin{enumerate}
  \item Both $R_i$ and $S_i$ are Noetherian, and $S_i$ is $I_0$-adically Zariskian.
  \item $R_i$ is $I_0$-adically complete Noetherian, and $S_i$ is $I_0$-adically complete.
  \end{enumerate}
Then $R_i\to S_i$ is flat if and only if $R_i^{s.\flat}\to S_i^{s.\flat}$ is flat.
\end{proposition}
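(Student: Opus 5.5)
Decompose $R_i$ and $S_i$ as fiber products over their $I_0$-torsion-free parts and special fibers, which reduces the problem to the $I_0$-torsion-free case; there, use the local flatness criterion to compare each ring with its small tilt through the common special fiber $R_i/I_0R_i\cong R_i^{s.\flat}/I_0^{s.\flat}R_i^{s.\flat}$ (\cref{thm:INStilt} (1)).

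\textbf{Reduction to the torsion-free case.} By condition \textbf{(g)}, $I_0(R_i)_{\Iztor}=0$, so $(R_i)_{\Iztor}\cap I_0R_i=0$, and likewise for $S_i$ because $J_0=I_0S$. Hence \cref{lem:flatglue2} (1), (2) applies: the front and back faces are cartesian and the right face is cocartesian. If $R_i\to S_i$ is flat, \cref{lem:flatglue2} (3) makes the bottom face cocartesian. The map $R_i/I_0\to\wt{R_i}/I_0\wt{R_i}$ is surjective. So \cref{lem:flatglue} gives that $R_i\to S_i$ is flat iff both $\wt{R_i}\to\wt{S_i}$ and $R_i/I_0R_i\to S_i/I_0S_i$ are flat. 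Two points need care:
\begin{enumerate}
\item For the converse direction, I must check separately that the bottom face is cocartesian once $\wt R_i\to\wt S_i$ is flat. This should follow from $\wt S_i\cong \wt R_i\otimes_{R_i}S_i$ modulo torsion, combined with the fiber product description.
\item The same dévissage is needed on the tilt side. There the tilt of $\wt{\textrm{{\boldmath $R$}}}$ is the torsion-free quotient of $\textrm{{\boldmath $R$}}^\flat$ (the Remark after \cref{ex:perfectoidtower}, via \cite[Theorem 3.1]{Ha26a}), and the special fibers coincide by \cref{thm:INStilt} (1), since $J_0^{s.\flat}$ corresponds to $I_0^{s.\flat}S^{s.\flat}$.
\end{enumerate}
Granting both, the statement reduces to the case where $R_i$ and $S_i$ are $I_0$-torsion free, i.e.\ $I_0=(f)$ with $f$ a nonzerodivisor, and likewise for the tilts.

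\textbf{Torsion-free case.} The local flatness criterion for a finitely generated ideal says: if $R_i$ is Noetherian and $S_i$ is $I_0$-adically Zariskian, or in the complete setting of (2), then $R_i\to S_i$ is flat iff $S_i/I_0S_i$ is flat over $R_i/I_0R_i$ and $\Tor_1^{R_i}(R_i/I_0,S_i)=0$. When $f$ is a nonzerodivisor on both rings, the Tor condition is automatic, so flatness of $R_i\to S_i$ is equivalent to flatness of the special fiber map $R_i/I_0R_i\to S_i/I_0S_i$. The same criterion applies on the tilt side, where $I_0^{s.\flat}$ is principal and generated by a nonzerodivisor in the torsion-free case. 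Its hypotheses are supplied as follows:
\begin{enumerate}
\item \cref{prop:towerNoeth} (2) gives that $R_i^{s.\flat}$ is Noetherian; in case (1) this requires first passing to completions.
\item The small tilts are inverse limits, and I expect them to be $I_0^{s.\flat}$-adically complete.
\end{enumerate}
By \cref{thm:INStilt} (1) and the compatibility of $\alpha^{s.\flat}$ with the $0$-th projection, the special fiber maps on the two sides are identified. Hence both flatness conditions are equivalent to flatness of the single map $R_i/I_0R_i\to S_i/I_0S_i$.

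\textbf{Main obstacle.} I expect the hardest part to be making the local flatness criterion applicable on the tilt side under hypothesis (1). $S_i$ is only Zariskian, and I do not see at once whether $S_i^{s.\flat}$ is Noetherian or complete. I would first try to reduce (1) to (2) by completing. Since $R_i\to\widehat{R_i}$ is faithfully flat and $S_i\to\widehat{S_i}$ is faithfully flat for a Noetherian Zariskian ring, flatness of $R_i\to S_i$ is equivalent to flatness of $\widehat{R_i}\to\widehat{S_i}$. The small tilts do not change under completion, because they depend only on the quotients $R_j/I_0R_j$. On the tilt side, I would use the nonnoetherian version of the criterion with $I_0^{s.\flat}$ principal: for $I$-adically complete rings and a regular element, flatness can be checked modulo $I$ together with regularity of the element (\cite[Tag 0912]{stacks-project}-type statements). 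That version should hold without Noetherianity of $S_i^{s.\flat}$.
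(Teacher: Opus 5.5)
Your strategy is the paper's: glue along the $I_0$-torsion-free quotient with \cref{lem:flatglue,lem:flatglue2}, then compare both sides with the common special-fiber map $R_i/I_0R_i\to S_i/I_0S_i$ via the local flatness criterion. However, the reduction step, as you set it up, rests on a false claim. Your point (1) says the bottom face is cocartesian once $\wt{R_i}\to\wt{S_i}$ is flat. This fails, and so does the ``if'' half of ``$R_i\to S_i$ is flat iff $\wt{R_i}\to\wt{S_i}$ and $R_i/I_0R_i\to S_i/I_0S_i$ are flat''. Take $R=\Z_p\to S=\Z_p\times\F_p$, $x\mapsto(x,\ol{x})$, with $I_0=(p)$. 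This is the $0$-th layer of a morphism of perfectoid towers satisfying all the hypotheses: use $R_i=\Z_p[p^{1/p^i}]$, and $S_i=\Z_p[p^{1/p^i}]\times\F_p$ obtained from \cref{ex:perfectoidtower} (4) with $R'=\F_p\times\F_p\to\F_p$ the first projection. Here $S_{\Iztor}=0\times\F_p$, so $\wt{R}\to\wt{S}$ is the identity of $\Z_p$, and $\F_p\to\F_p\times\F_p$ is flat. Yet $\Z_p\to\Z_p\times\F_p$ is not flat, and the bottom face map $\F_p\otimes_{\F_p}(\F_p\times\F_p)\to\F_p$ is not an isomorphism.

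The missing observation, which is exactly how the paper argues, is that the two cubes (for $R_i\to S_i$ and for $R_i^{s.\flat}\to S_i^{s.\flat}$) have the same bottom face. Whichever of the two maps you assume flat, \cref{lem:flatglue2} (3) applied to that map makes this common face cocartesian, and then \cref{lem:flatglue} applies to both cubes at once. For example, if $R_i\to S_i$ is flat, then $\wt{R_i}\to\wt{S_i}$ and $R_i/I_0R_i\to S_i/I_0S_i$ are flat. The torsion-free case then gives that $(\wt{R_i})^{s.\flat}\to(\wt{S_i})^{s.\flat}$ is flat, and the tilt cube gives that $R_i^{s.\flat}\to S_i^{s.\flat}$ is flat. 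The converse is symmetric. So point (1) should simply be dropped. You do still need that hypothesis (2) passes to $\wt{S_i}$, which the paper takes from \cite[Proposition 3.10]{Ha26a}.

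Finally, your ``main obstacle'' is not one. $S_i^{s.\flat}$ is $I_0^{s.\flat}$-adically complete, as every small tilt is, and $S_i^{s.\flat}/I_0^{s.\flat}S_i^{s.\flat}\cong S_i/I_0S_i$ is Noetherian. So in case (1), $S_i^{s.\flat}$ is Noetherian by the same argument you use for $R_i^{s.\flat}$ (\cref{prop:towerNoeth} and its Bourbaki lemma), and Matsumura's criterion applies directly on the tilt side. The detour through completions is therefore unnecessary. Its stated justification is also wrong: $R_i\to\wh{R_i}$ need not be faithfully flat, since a preperfectoid tower need not satisfy \textbf{(e)}.
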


\begin{proof}
By shifting, we may assume $i=0$. Let $\wt{R}\coloneqq R/R_{\Iztor}$ and $\wt{S}\coloneqq S/S_{\Iztor}$. Since $J_0^{s.\flat}\coloneqq\Ker(S^{s.\flat}\xr{\pr_0}S/J_0)=I_0^{s.\flat}S^{s.\flat}$ (\cite[Lemma 2.14]{Ha26a}), we have the following commutative diagrams with the same bottom faces.
\[
\begin{tikzcd}[column sep=tiny, row sep=small]
 & S \ar[rr] \ar[dd] & & \wt{S} \ar[dd] \\
R \ar[rr,crossing over] \ar[dd] \ar[ru] & & \wt{R} \ar[dd] \ar[ru] & \\
 & S/I_0S \ar[rr] & & \wt{S}/I_0\wt{S}, \\
R/I_0 \ar[rr] \ar[ru] & & \wt{R}/I_0\wt{R} \ar[ru] \ar[from=2-3,crossing over]
\end{tikzcd}
\qquad
\begin{tikzcd}[column sep=tiny, row sep=small]
 & S^{s.\flat} \ar[rr] \ar[dd] & & (\wt{S})^{s.\flat} \ar[dd] \\
R^{s.\flat} \ar[rr,crossing over] \ar[dd] \ar[ru] & & (\wt{R})^{s.\flat} \ar[dd] \ar[ru] & \\
 & S/I_0S \ar[rr] & & \wt{S}/I_0\wt{S} \\
R/I_0 \ar[rr] \ar[ru] & & \wt{R}/I_0\wt{R} \ar[ru] \ar[from=2-3,crossing over]
\end{tikzcd}
\]
By \cref{lem:flatglue2} (1) (2), the right faces are cocartesian and the front and back faces are cartesian. Moreover, if either one of $R\to S$ or $R^{s.\flat}\to S^{s.\flat}$ is flat, then the bottom face is cocartesian by \cref{lem:flatglue2} (3). Hence we can apply \cref{lem:flatglue} to both cubes above, and thus we may replace $R\to S$ by $\wt{R}\to \wt{S}$. (The condition (2) is preserved by \cite[Proposition 3.10]{Ha26a}).

(1) By a consequence of the local criterion of flatness (\cite[Exercise 22.3]{Mat2}), the flatness of either one of $R\to S$ and $R^{s.\flat}\to S^{s.\flat}$ is equivalent to that of $R/I_0\to S/I_0S$. This completes the proof in this case.

(2) is similar, due to Bhatt's result \cite[Proposition 5.1]{Bha18}.
\end{proof}

\begin{remark}
Using the local criterion of flatness (\cite[Theorem 22.3]{Mat2}), \cref{prop:tilting flat} in the case (1) also follows from the isomorphism of conormal cones $\gr_{I_0^{s.\flat}}(R^{s.\flat})\xr{\cong} \gr_{I_0}(R)$ (\cite[Proposition 3.4]{Ha26b}).
\end{remark}

As a corollary, we have the following result.

\begin{corollary}
\label{cor:flatrigid}
Let $\textrm{{\boldmath $R$}}=\{R_i,t_i\}_{i\geq 0}$ be a Noetherian perfectoid tower arising from a pair $(R,I_0)$.
  \begin{enumerate}
  \item For any $i\geq 0$, $t_i$ is flat if and only if $t_i^{s.\flat}$ is flat.
  \item If $t_i$ is flat for some $i\geq 0$, then $t_i$ is flat for any $i\geq 0$.
  \end{enumerate}
\end{corollary}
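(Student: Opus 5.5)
For (1), I would apply \cref{prop:tilting flat}(1) to a shifted pair of towers. Set $\textrm{{\boldmath $R$}}[i]\coloneqq\{R_{i+m}\}_{m\geq 0}$ and $\textrm{{\boldmath $R$}}[i+1]\coloneqq\{R_{i+1+m}\}_{m\geq 0}$. The transition maps give a morphism of towers $\{t_{i+m}\}_{m\geq 0}\colon \textrm{{\boldmath $R$}}[i]\to\textrm{{\boldmath $R$}}[i+1]$. The shifted tower $\textrm{{\boldmath $R$}}[i]$ is again a (pre)perfectoid tower arising from $(R_i,I_0R_i)$, with first pillar $I_{i+1}$. Similarly $\textrm{{\boldmath $R$}}[i+1]$ arises from $(R_{i+1},I_0R_{i+1})$ with first pillar $I_{i+2}$. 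The hypotheses $J_0=I_0S$ and $J_1=I_1S$ of \cref{prop:tilting flat} then read $I_0R_{i+1}=(I_0R_i)R_{i+1}$, which is trivial, and $I_{i+2}=I_{i+1}R_{i+2}$.

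The second identity is where the work lies. I would check it modulo $I_0$ using the defining property of pillars, $F_{m}(I_{m+1}\ol{R_{m+1}})=I_m\ol{R_m}$. Concretely, since $\Ker F$ is generated by $I_1$ and the Frobenius projections are compatible with the $\ol{t}$'s, one expects $I_{m}\ol{R_{m+1}}=I_{m+1}^p\ol{R_{m+1}}$ and the like. If equality of ideals in the ring itself is needed, I would lift using $I_0\subset I_{m}$; if that fails, I would instead rerun the proof of \cref{prop:tilting flat}(1) directly, since it only uses the reduction to $R/I_0\to S/I_0S$ via the local flatness criterion and \cref{thm:INStilt}(1).

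Once the hypotheses hold, condition (1) of \cref{prop:tilting flat} is satisfied: the tower is Noetherian and $R_{i+1}$ is $I_0$-Zariskian by \textbf{(e)}. The tilt of the morphism is $\{t^{s.\flat}_{i+m}\}$, since the small tilts of a shifted tower are the shifted small tilts. So $t_i$ is flat if and only if $t_i^{s.\flat}$ is flat.

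For (2), I would pass to the tilt. $\textrm{{\boldmath $R$}}^\flat$ is a perfect tower, by the remark after \cref{thm:INStilt}, so it is isomorphic to $A\xr{\varphi}A\xr{\varphi}\cdots$ with $A=R^{s.\flat}$ reduced. Under this isomorphism every $t_i^{s.\flat}$ is identified with the same map $\varphi_A$. Hence $t_i^{s.\flat}$ is flat for one $i$ if and only if it is flat for all $i$, and (1) transfers this back to the $t_i$. This also requires the $R_i^{s.\flat}$ to be Noetherian, which is \cref{prop:towerNoeth}(2) after completion, or is simply not needed if one works via the residue criterion as above.

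The main obstacle is verifying the pillar compatibility $J_1=I_1S$ for the shifted morphism, or bypassing it.
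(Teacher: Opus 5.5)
Your overall route is the paper's. For (1) you apply \cref{prop:tilting flat} to the shifted morphism $\{t_{i+m}\}_{m\geq 0}$. For (2) you pass to the tilt, a perfect tower whose transition maps are all identified with $\varphi_{R^{s.\flat}}$, and transfer back via (1). Invoking condition (1) of the proposition is the appropriate choice: the corollary supplies Noetherianity and \textbf{(e)} but not completeness, whereas the paper cites condition (2). However, the step you single out as the main obstacle rests on a misidentification, and as you formulate it, it cannot be carried out.

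For $j\geq 0$, regard $\{R_{j+m}\}_{m\geq 0}$ as arising from $(R_j,I_0R_j)$.
\begin{itemize}
\item Any ideal $J\subset R_{j+1}$ satisfying \textbf{(f-1)} contains $J^p=I_0R_{j+1}$. So \textbf{(f-2)} forces $J$ to be the full preimage of $\Ker(F_j)$ in $R_{j+1}$.
\item This is not the pillar $I_{j+1}$ once $j\geq 1$, because $F_j(I_{j+1}\ol{R_{j+1}})=I_j\ol{R_j}$ is in general nonzero.
\item Concretely, for $\Z_p\hookrightarrow\Z_p[p^{1/p}]\hookrightarrow\cdots$ one has $I_{j+1}=(p^{1/p^{j+1}})$, so $I_{j+1}^p=(p^{1/p^j})\neq (p)$ for $j\geq 1$.
\end{itemize}
In particular, the identity $I_{i+2}=I_{i+1}R_{i+2}$ you set out to prove is false in this example for every $i\geq 0$, since $(p^{1/p^{i+2}})\neq(p^{1/p^{i+1}})$. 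No computation modulo $I_0$ and no lifting argument can establish it. Your fallback of ``rerunning the proof'' does not escape this either. The hypotheses $J_0=I_0S$ and $J_1=I_1S$ are exactly what yield $J_0^{s.\flat}=I_0^{s.\flat}S^{s.\flat}$, which that proof needs.

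The fix is that the \textbf{(f)}-ideal of the shifted tower is simply $I_1R_{j+1}$:
\begin{itemize}
\item it is principal;
\item $(I_1R_{j+1})^p=I_0R_{j+1}$ by \textbf{(f-1)};
\item $\Ker(F_{j+m})=I_1\ol{R_{j+m+1}}=(I_1R_{j+1})\ol{R_{j+m+1}}$ for all $m\geq 0$, by \textbf{(f-2)} for the original tower.
\end{itemize}
The remaining conditions are inherited levelwise. Taking $j=i$ and $j=i+1$, the hypotheses of \cref{prop:tilting flat} become the tautologies $I_0R_{i+1}=(I_0R_i)R_{i+1}$ and $I_1R_{i+2}=(I_1R_{i+1})R_{i+2}$. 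So there is no obstacle, and with this correction your argument is complete and coincides with the paper's.
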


\begin{proof}
(1) Apply \cref{prop:tilting flat} (2) to the morphism of towers of rings $\{t_{i+j}\}_{j\geq 0} \colon \{R_{i+j}\}_{j\geq 0} \to \{R_{i+j+1}\}_{j\geq 0}$.

(2) By (1), we may replace {\boldmath $R$} by $\textrm{{\boldmath $R$}}^\flat$. Then all $t_i$ are isomorphic to the absolute Frobenius of $R$, and thus the assertion is clear.
\end{proof}

We turn to the regularity in terms of perfectoid towers. Recall that Kunz's theorem asserts that a Noetherian $\F_p$-algebra is regular if and only if its absolute Frobenius is flat. In his paper \cite{Lur23}, Lurie proved the following mixed characteristic variant of Kunz's theorem, which was suggested by Gabber:

\begin{theorem}[{Gabber--Lurie \cite[Theorem 6]{Lur23}}]
\label{thm:LurKunz}
Let $R$ be a Noetherian ring, and $\pi\in R$ an element such that $p\in\pi^pR$. Consider the following conditions.
  \begin{enumerate}
  \item For every maximal ideal $\m\subset R$ containing $\pi$, the local ring $R_\m$ is regular.
  \item The ring map $R/\pi R\to R/\pi^pR$ induced by the absolute Frobenius of $R/\pi^pR$ is flat.
  \end{enumerate}
Then the implication ``\emph{(1)} $\Rightarrow$ \emph{(2)}'' holds. If $R$ is $\pi$-torsion free, then we also have ``\emph{(2)} $\Rightarrow$ \emph{(1)}.''
\end{theorem}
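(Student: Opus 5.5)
The plan is to reduce both implications to a local statement at a maximal ideal $\m\supset\pi$, and then to handle the two directions differently: (1)$\Rightarrow$(2) by an explicit computation in a complete regular local ring, and (2)$\Rightarrow$(1) by a Kunz-type length comparison.

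Throughout, write $A\coloneqq R/\pi R$, $B\coloneqq R/\pi^pR$, and $\phi\colon A\to B$, $x\bmod \pi\mapsto x^p\bmod\pi^p$. First I check that $\phi$ is a well-defined ring map. Expanding $(x+\pi y)^p$, the mixed binomial terms are divisible by $p\in\pi^pR$, and the last term is $\pi^py^p$. Additivity holds for the same reason.

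\emph{Reduction to the local case.} The prime ideals of $B$ are those of $R$ containing $\pi$. Moreover $\phi$ induces a homeomorphism on spectra, by the argument of \cref{prop:piuh}, since $\phi$ is purely inseparable modulo nilpotents. Hence flatness of $\phi$ can be checked after localizing at maximal ideals $\m\supset\pi$, and both conditions localize. Since completion is faithfully flat and commutes with everything in sight, I may further assume that $(R,\m,k)$ is a complete local ring and $\pi\in\m$.

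\emph{(1)$\Rightarrow$(2).} Now $R$ is a complete regular local ring. In the equal characteristic case, $\phi$ is the base change of the Frobenius of $R$, which is flat by Kunz, and flatness is stable under the quotient. In mixed characteristic I would use the tower of \cref{ex:perfectoidtower}~(2): $R\hookrightarrow R_1=C(k^{1/p})\llbracket x_1^{1/p},\dots,x_d^{1/p}\rrbracket/(p-f)$. This extension is module-finite. It is also free, since a basis is given by monomials in the $x_j^{1/p}$ times a $p$-basis of $k$ (finiteness of $[k:k^p]$ being handled by a Cohen-ring base change argument if necessary). The $p$-th power map identifies $R_1/\pi^{1/p}$-type quotients with quotients of $R$, so that $\phi$ becomes a base change of $R\to R_1$ followed by an isomorphism. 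I expect that making this identification work for an arbitrary $\pi$ with $p\in\pi^pR$, rather than for $\pi=p^{1/p}$, requires choosing a $p$-th root of $\pi$ up to a unit in a suitable flat extension; I would do this by adjoining $\pi^{1/p}$ and checking that the adjunction is faithfully flat.

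\emph{(2)$\Rightarrow$(1), the main step.} Assume $\phi$ is flat and $R$ is $\pi$-torsion free. Let $d=\dim R$, and choose a system of parameters $\pi,y_2,\dots,y_d$ of $R$, which is possible since $\pi$ is a nonzerodivisor. For each $n$, put $J_n=(y_2^n,\dots,y_d^n)$ and $q_n=(\pi)+J_n$. Tensoring $A/q_nA$ along the flat map $\phi$ gives
\[
B\otimes_A A/q_nA \cong R/(\pi^p, y_2^{pn},\dots,y_d^{pn}) ,
\]
and flatness over the Artinian-modulo-$q_n$ pieces yields
\[
\ell_R\bigl(R/(\pi^p,J_{pn})\bigr)=\ell\bigl(R/(\pi,J_{n})\bigr)\cdot\ell(B\otimes_A k).
\]
Since $R$ is $\pi$-torsion free, the left-hand side equals $p\,\ell(R/(\pi,J_{pn}))$. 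Iterating this identity and comparing with Hilbert--Samuel growth, I expect to obtain the equality $\ell(B\otimes_Ak)=p^{d}\cdot[k:\phi(k)]$ together with $e(q_n)=\ell(R/q_n)$, that is, that $R$ is Cohen--Macaulay of minimal multiplicity relative to this system of parameters. Applying the same equality with $q_n$ replaced by $\m$-primary ideals $\m^{[p^e]}+(\pi)$ should force $\ell(R/\m^{[p]})=p^{d}\cdot(\text{residue degree})$. As in Kunz's original argument, this in turn forces $\mathrm{edim}\,R=d$, i.e.\ $R$ is regular.

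I expect this length bookkeeping to be the main obstacle, and in particular two points within it: controlling the residue field extension $k\to\phi(k)$-contribution, and passing from the equality for $\m^{[p]}$ to the embedding dimension. For the first point I would try to pass to a $\pi$-torsion free faithfully flat extension with perfect residue field (a Cohen-type gonflement) before doing the length count.
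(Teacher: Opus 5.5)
The paper does not prove this statement; it quotes it from Lurie. So I judged your proposal on its own terms. Several steps are fine: the reduction to the complete local case, the well-definedness of $\varphi$, the identity $\ell_B(B\otimes_A M)=\ell_A(M)\,\ell_B(B\otimes_Ak)$ coming from flatness, and the characteristic-$p$ half of (1)$\Rightarrow$(2). But both implications have genuine gaps, and the serious one is in (2)$\Rightarrow$(1).

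\emph{(2)$\Rightarrow$(1).}

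First, your equality $\ell(R/(\pi^p,J_{pn}))=p\,\ell(R/(\pi,J_{pn}))$ is false in general. The $\pi$-adic filtration of $R/(\pi^p,J)$ has graded pieces $R/\bigl((\pi)+(J:\pi^j)\bigr)$. These all equal $R/(\pi,J)$ only when $\pi$ is regular on $R/J$, which is a Cohen--Macaulay-type property, not a consequence of $\pi$-torsion-freeness of $R$. In general you only get ``$\le$''; torsion-freeness adds $e(R/\pi^p)=p\,e(R/\pi)$.

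This part can be repaired:
\begin{itemize}
\item Comparing leading coefficients gives $\ell(B\otimes_Ak)=p^d$. There is no factor $[k:\varphi(k)]$, because the lengths are over $B$.
\item The inequality then gives $\chi_1(\bar y^{pt};A)\ge p^{d-1}\chi_1(\bar y^{t};A)$. Together with Lech's limit formula this forces $\chi_1=0$, so $R$ is Cohen--Macaulay.
\end{itemize}

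The real gap is the last step. Set $\mathfrak m^{[p]}:=(\pi^p,x_1^p,\dots,x_n^p)$, with $\bar x_i$ generating $\mathfrak m/\pi R$. From $\ell(R/\mathfrak m^{[p]})=p^d$ you conclude $\operatorname{edim}R=d$ ``as in Kunz,'' and nothing in the proposal actually does this.
\begin{itemize}
\item Kunz's arguments use a flat Frobenius \emph{endomorphism} of $R$ and all its iterates. Here you have a single flat map between two different quotients of $R$; it cannot be iterated and has no counterpart on $R$.
\item Even in characteristic $p$, the criterion ``$\ell(R/\mathfrak m^{[p]})=p^d\Rightarrow R$ regular'' is essentially a restatement of flatness of Frobenius. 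For an $F$-finite local domain with perfect residue field, $\ell(R/\mathfrak m^{[p]})=\mu(F_*R)\ge\operatorname{rank}F_*R=p^d$, with equality iff $F_*R$ is free.
\item So appealing to it is circular: you have traded flatness of $\varphi$ for a numerical shadow of it, and you still need the theorem.
\item Your route does close in dimension $1$. There $\mathfrak m^{[p]}\subset\mathfrak m^p$ gives $\ell(R/\mathfrak m^p)\le p$, which forces $\mu(\mathfrak m)=1$.
\item For $d\ge 2$ this fails: a $2$-dimensional quadric hypersurface already has $\ell(R/\mathfrak m^p)=p^2$.
\end{itemize}
The core of the Gabber--Lurie implication is therefore not supplied.

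\emph{(1)$\Rightarrow$(2) in mixed characteristic.}

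The tower computation, as in the proof of \cref{thm:regularFlat}, identifies the Frobenius $R_1/I_1\to R_1/I_0R_1$ with $\overline{t_0}$. That proves flatness for the pair $(R_1,\pi_1)$ with $I_1=(\pi_1)$, not for $(R,\pi)$. For an arbitrary $\pi$ you would need $R$ itself to be the first layer of a tower.

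Adjoining $\pi^{1/p}$ goes the wrong way. For $R'=R[T]/(T^p-\pi)$, the map $R'/T\to R'/T^p$ is just $F_A$ followed by the inclusion $A\subset A[T]/(T^p)$. In addition, $R\to R_1$ is not module-finite when $[k:k^p]=\infty$.

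There is a direct argument:
\begin{itemize}
\item After completing, Cohen's structure theorem gives $R=P/(g)$ with $P=C\llbracket x_1,\dots,x_d\rrbracket$.
\item Lift $p=\pi^pv$ to get $p-\tilde\pi^pu\in(g)$. This element lies outside $\mathfrak m_P^2$, so it generates $(g)$.
\item Hence $R/\pi\cong\bar P/\bar\pi$ and $R/\pi^p\cong\bar P/\bar\pi^p$, where $\bar P=P/pP=k\llbracket x_1,\dots,x_d\rrbracket$ is regular.
\item So $\varphi$ is the base change of the Frobenius of $\bar P$ along $\bar P\to\bar P/\bar\pi$, and is flat by Kunz.
\end{itemize}
If $R$ is unramified, $p\in\pi^pR$ forces $\pi\notin\mathfrak m$, so there is nothing to prove. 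In characteristic $p$, take $\bar P=R$.
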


\begin{remark}
  \begin{enumerate}
  \item Note that the proof of ``(1) $\Rightarrow$ (2)'' in \cite[Theorem 6]{Lur23} does not use the assumption that $R$ is $\pi$-torsion free. This implication also follows from prismatic Kunz's theorem (\cite[Remark 5.16]{IN26}).
  \item As explained in \cite[Warning 2]{Lur23}, the implication ``(2) $\Rightarrow$ (1)'' is not true without the assumption that $R$ is $\pi$-torsion free.
  \end{enumerate}
\end{remark}

Now we apply \cref{thm:LurKunz} to characterize regularity by the flatness of transition maps.

\begin{theorem}
\label{thm:regularFlat}
Let $\textrm{{\boldmath $R$}}=\{R_i,t_i\}_{i\geq 0}$ be a perfectoid tower arising from a pair $(R,I_0)$. Fix $i\geq 0$, and assume that $R_i$ and $R_{i+1}$ are Noetherian. Then the following conditions are equivalent.
  \begin{enumerate}
  \item The Noetherian ring $R_i$ is regular.
  \item The transition map $t_i\colon R_i\to R_{i+1}$ is flat.
  \end{enumerate}
\end{theorem}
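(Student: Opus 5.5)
The plan is to apply \cref{thm:LurKunz} not to $R_i$ but to $R_{i+1}$, taking for $\pi$ a generator of the principal ideal $I_1R_{i+1}$. Write $I_0=(f_0)$ and $I_1=(f_1)$. By \textbf{(f-1)} we have $p\in I_0R_{i+1}=f_1^pR_{i+1}$, so the hypothesis of \cref{thm:LurKunz} is satisfied. By \textbf{(f-2)} and \textbf{(d)}, the Frobenius projection induces an isomorphism $R_{i+1}/I_1R_{i+1}\xrightarrow{\cong}R_i/I_0R_i$. Under this identification, the map in \cref{thm:LurKunz} (2) for $R_{i+1}$ becomes
\[
R_{i+1}/f_1R_{i+1}\xrightarrow{\varphi}R_{i+1}/f_1^pR_{i+1},
\]
and this map is exactly $\ol{t_i}\colon R_i/I_0R_i\to R_{i+1}/I_0R_{i+1}$, by the defining factorization $\varphi=\ol{t_i}\circ F_i$. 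Since \textbf{(e)} makes every maximal ideal of $R_{i+1}$ contain $f_1$, \cref{thm:LurKunz} should give: $R_{i+1}$ is regular if and only if $\ol{t_i}$ is flat. The implication ``regular $\Rightarrow$ flat'' holds unconditionally; the converse needs $f_0$-torsion freeness.

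Next I relate the flatness of $\ol{t_i}$ to that of $t_i$, following the proof of \cref{prop:tilting flat}.
\begin{enumerate}
\item \textbf{Reduction to the torsion-free case.} Using \cref{lem:flatglue,lem:flatglue2} together with condition \textbf{(g)}, I replace $R_i\to R_{i+1}$ by $\wt{R_i}\to\wt{R_{i+1}}$. These rings still form a perfectoid tower by \cite[Theorem 3.1]{Ha26a}, and their reductions modulo $I_0$ are the torsion-free parts of $\ol{t_i}$.
\item \textbf{Local flatness criterion.} For Noetherian, $I_0$-adically Zariskian rings, \cite[Exercise 22.3]{Mat2} shows that $t_i$ is flat if and only if $\ol{t_i}$ is flat.
\end{enumerate}
Combining this with the previous paragraph, I expect to get $t_i$ flat $\iff$ $\ol{t_i}$ flat $\iff$ $R_{i+1}$ regular. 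In the torsion-free case the second equivalence also needs the torsion-free hypothesis of \cref{thm:LurKunz}, so I will have to check that the cartesian gluing is compatible with regularity; this is the point I would re-examine.

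It remains to pass from $R_{i+1}$ to $R_i$. For (2) $\Rightarrow$ (1): $t_i$ is a local homomorphism at every maximal ideal by \cref{lem:localtower} (1), applied after localizing as in \cref{ex:perfectoidtower} (5). Hence $t_i$ is faithfully flat there, and regularity descends from $R_{i+1}$ to $R_i$. For (1) $\Rightarrow$ (2), the plan is to show directly that $R_{i+1}$ is regular at each maximal ideal $\m_{i+1}$, and then apply the direction (1) $\Rightarrow$ (2) of \cref{thm:LurKunz}.
\begin{enumerate}
\item \textbf{Dimension.} By \cref{lem:localtower} (2) the closed fibre of $t_i$ has dimension $0$, and $t_i$ is integral modulo $I_0$, so I expect $\dim R_{i+1}=\dim R_i=:d$.
\item \textbf{Generators.} The relation $\m_i\ol{R_{i+1}}=(\m_{i+1}\ol{R_{i+1}})^{[p]}$ should let me lift a regular system of parameters of $R_i$ to $d$ generators of $\m_{i+1}$, modulo $f_1$.
\end{enumerate}

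I expect step (2) to be the main obstacle, because one must control whether $f_0$ lies in $\m_i^2$. If it does not work, I would fall back on the tilt. In that approach, \cref{cor:flatrigid} (1) identifies the flatness of $t_i$ with that of $t_i^{s.\flat}$, which is the Frobenius of $R_i^{s.\flat}$. Kunz's theorem then reduces the problem to showing that $R_i$ is regular if and only if $R_i^{s.\flat}$ is regular. I would attack that equivalence via the isomorphism $\gr_{I_0^{s.\flat}}(R^{s.\flat})\cong\gr_{I_0}(R)$ \cite[Proposition 3.4]{Ha26b}, using that regularity can be read off from the associated graded ring together with the regularity of the quotient by a non-zerodivisor.
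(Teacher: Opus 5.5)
Your skeleton for (2)$\Rightarrow$(1) matches the paper's. You apply \cref{thm:LurKunz} to $R_{i+1}$ with $\pi=f_1$, identify its Frobenius map with $\overline{t_i}$ through $R_{i+1}/I_1R_{i+1}\cong R_i/I_0R_i$, and descend regularity along the faithfully flat local map $t_i$. The gap is that the converse direction of \cref{thm:LurKunz} requires $R_{i+1}$ to be $I_0$-torsion free, and you never establish this. Passing to $\widetilde{R_i}\to\widetilde{R_{i+1}}$ only proves that $\widetilde{R_i}$ is regular. ``Checking that the gluing is compatible with regularity'' then amounts to proving that $R_i$ has no $I_0$-torsion, and nothing in your argument forces this. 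The converse of \cref{thm:LurKunz} genuinely fails in the presence of torsion, so some extra input is needed. The missing idea is to get torsion-freeness from the tilt:
\begin{enumerate}
\item $t_i$ flat implies $t_i^{s.\flat}$ flat, by \cref{cor:flatrigid} and its proof.
\item Since the tilt is a perfect tower, $t_i^{s.\flat}$ is the Frobenius of $R_i^{s.\flat}$, so Kunz's theorem makes $R_i^{s.\flat}$ regular, hence a domain.
\item Then $R_{i+1}$ is $I_0$-torsion free by \cite[Proposition-Definition 3.15]{HIS26}.
\end{enumerate}
You do invoke the tilt and Kunz in your fallback, but aimed at the wrong target.

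For (1)$\Rightarrow$(2), your direct attempt to prove $R_{i+1}$ regular stalls exactly where you fear. If $f_0\in\mathfrak{m}_i^2$, then $R_i/I_0R_i$ is singular, and the isomorphism $R_{i+1}/I_1R_{i+1}\cong R_i/I_0R_i$ only gives $\operatorname{edim}R_{i+1}\le d+1$. You would need $f_1\in\mathfrak{m}_{i+1}^2$, which the Frobenius-power relation of \cref{lem:localtower} does not visibly supply. The fallback does not work either. Since $f_0$ is a nonzerodivisor, $\operatorname{gr}_{I_0}(R)\cong (R/I_0)[T]$ carries no more information than $R/I_0$, and that quotient does not detect regularity. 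For instance, $k\llbracket x,y\rrbracket$ with $f=xy$ and $k\llbracket x,y,z\rrbracket/(z^2-xy)$ with $f=z$ have the same quotient $k\llbracket x,y\rrbracket/(xy)$, yet only the first is regular. In the paper, ``$R_i$ regular iff $R_i^{s.\flat}$ regular'' is a consequence of this theorem, not an input.

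The fix is to aim for Cohen--Macaulayness instead of regularity. $R_i$ regular makes $R_i$, hence by \textbf{(g)} also $R_{i+1}$, $I_0$-torsion free, so $f_0$ and $f_1$ are nonzerodivisors. Then $R_{i+1}/I_1R_{i+1}\cong R_i/I_0R_i$ shows that $R_{i+1}$ is Cohen--Macaulay with $\dim R_{i+1}=\dim R_i$, whether or not $f_0\in\mathfrak{m}_i^2$. Since the closed fibre has dimension $0$ (\cref{lem:localtower}), miracle flatness \cite[Theorem 23.1]{Mat2} gives flatness of $t_i$. Regularity of $R_{i+1}$ then comes out afterwards rather than being needed first.
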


\begin{proof}
By shifting, we may assume $i=0$. As $R$ is $I_0$-adically Zarisikian, we may replace $R$ by the localization $R_\m$ at a maximal ideal $\m$ (\cref{ex:perfectoidtower} (4)). Then $R_1$ is also local by \cref{prop:towerLocal} (1).

(1) $\Rightarrow$ (2): By \cite[Theorem 23.1]{Mat2}, we only have to show that $R_1$ is Cohen--Macaulay and the equality $\dim (R_1)=\dim(R) + \dim(R_1/\m R_1)$ holds. In view of \cref{lem:localtower} (3), it is enough to prove the following claims.
  \begin{itemize}
  \item $R$ is Cohen--Macaulay if and only if $R_1$ is Cohen--Macaulay.
  \item The equality $\dim(R)=\dim (R_1)$ holds.
  \end{itemize}
Since the regular local ring $R$ is $I_0$-torsion free, so is $R_1$ by \textbf{(g)}. Then both claims are immediate from the isomorphism $R_1/I_1\xr{\cong}R/I_0$, although these claims are true even if $R$ has $I_0$-torsion (\cite[Theorem 4.13 (2)]{Ha26a}).

(2) $\Rightarrow$ (1): Since $t_0$ is local by \cref{lem:localtower} (1), it is faithfully flat. Hence, by faithfully flat descent, it suffices to show that $R_1$ is regular. By (the proof of) \cref{cor:flatrigid} (2), $t_0^{s.\flat}\colon R^{s.\flat}\to R_1^{s.\flat}$ is flat. This means that the local ring $R^{s.\flat}$ is regular by Kunz's theorem. In particular, $R^{s.\flat}$ is an integral domain. Then $R_1$ is $I_0$-torsion free (\cite[Proposition-Definition 3.15]{HIS26}). Hence, due to \cref{thm:LurKunz}, it is enough to prove the following: the map $R_1/I_1 \to R_1/I_0R_1$ induced by the absolute Frobenius of $R_1/I_0R_1$ is flat. But the stated map is the composite
\[
R_1/I_1 \xr{\cong} R/I_0 \xr{\ol{t_0}} R_1/I_0R_1,
\]
where the isomorphism is induced by the $0$-th Frobenius projection $F_0\colon R_1/I_0R_1 \to R/I_0$ (cf.\ \textbf{(f-2)}). Hence the desired flatness is equivalent to that of $\ol{t_0}$. Since $t_0$ is assumed to be flat, so is the base change $\ol{t_0}$.
\end{proof}

\begin{remark}
According to Kunz's theorem, one would expect that $R$ is regular if and only if the $0$-th Frobenius projection $F_0\colon R_1/I_0R_1\to R/I_0$ is flat. However, the flatness of $F_0$ neither implies nor is implied by the regularity of $R$:
  \begin{enumerate}
  \item Consider the perfect tower $R\xr{\varphi}R\xr{\varphi}R\xr{\varphi}\cdots$ for a reduced $\F_p$-algebra $R$ (\cref{ex:perfectoidtower} (1)). Then the Frobenius projection is the identity map $\id_R$, which is flat. But $R$ may be singular.
  \item Consider the perfectoid tower $\Z_p\hookrightarrow \Z_p[p^{1/p}] \hookrightarrow \Z_p[p^{1/p^2}]\hookrightarrow\cdots$ associated to the regular local ring $\Z_p$ (\cref{ex:perfectoidtower} (2)). If the $i$-th Frobenius projection $\Z_p[p^{1/p^{i+1}}]/(p) \to \Z_p[p^{1/p^i}]/(p)$ is flat, then the absolute Frobenius of $\Z_p[p^{1/p^{i+1}}]/(p)$ would be flat. It means that $\Z_p[p^{1/p^{i+1}}]/(p)$ is regular by Kunz's theorem. But $\Z_p[p^{1/p^{i+1}}]/(p)$ is not even reduced.
  \end{enumerate}
\end{remark}

Now let us state an immediate but important corollary of \cref{thm:regularFlat}.

\begin{corollary}
Let $\textrm{{\boldmath $R$}}=\{R_i\}_{i\geq 0}$ be a Noetherian perfectoid tower of arising from a pair $(R,I_0)$.
  \begin{enumerate}
  \item For any $i\geq 0$, then $R_i$ is regular if and only if $R_i^{s.\flat}$ is regular.
  \item If $R_i$ is regular for some $i\geq 0$, then $R_i$ is regular for any $i\geq 0$.
  \end{enumerate}
\end{corollary}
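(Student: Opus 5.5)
Both parts follow from \cref{thm:regularFlat}, \cref{cor:flatrigid}, and Kunz's theorem applied to the tilt. Throughout, the tower is Noetherian, so every $R_i$ is Noetherian. Hence \cref{thm:regularFlat} applies at every level $i$: $R_i$ is regular if and only if $t_i$ is flat.

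For (2), suppose $R_i$ is regular for some $i$. By \cref{thm:regularFlat}, $t_i$ is flat. By \cref{cor:flatrigid} (2), $t_j$ is then flat for every $j\geq 0$. Applying \cref{thm:regularFlat} again at each level $j$ shows that every $R_j$ is regular.

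For (1), I combine \cref{thm:regularFlat} with \cref{cor:flatrigid} (1): $R_i$ is regular if and only if $t_i$ is flat, if and only if $t_i^{s.\flat}$ is flat. It remains to show that $t_i^{s.\flat}$ is flat if and only if $R_i^{s.\flat}$ is regular. By \cref{thm:INStilt} (2) and the remark following it, $\textrm{{\boldmath $R$}}^\flat$ is a perfect tower. So $t_i^{s.\flat}$ is identified, up to isomorphism, with the absolute Frobenius of $R_i^{s.\flat}$; this is the identification already used in the proof of \cref{cor:flatrigid} (2), and it applies at every level $i$.

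The remaining ingredient is that $R_i^{s.\flat}$ is Noetherian, since Kunz's theorem requires it. I expect this to be the main obstacle, because \cref{prop:towerNoeth} (2) is stated only for $I_0$-adically complete towers. I would handle it as follows.
\begin{itemize}
\item The small tilt $R_i^{s.\flat}$ is an inverse limit along the Frobenius projections, so it is $I_0^{s.\flat}$-adically complete; this is the standard property of small tilts in \cite{INS25}.
\item $I_0^{s.\flat}$ is principal, and $R_i^{s.\flat}/I_0^{s.\flat}R_i^{s.\flat}\cong R_i/I_0R_i$ by \cref{thm:INStilt} (1). The right-hand side is Noetherian.
\item The Bourbaki-type lemma stated after \cref{prop:towerNoeth} (a complete ring is Noetherian when its quotient by a finitely generated ideal is) then shows that $R_i^{s.\flat}$ is Noetherian.
\end{itemize}
This needs no completeness of $R_i$ itself.

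With Noetherianity in hand, Kunz's theorem gives that the Frobenius of $R_i^{s.\flat}$, and hence $t_i^{s.\flat}$, is flat if and only if $R_i^{s.\flat}$ is regular. This closes the chain of equivalences in (1).
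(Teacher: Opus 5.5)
Your proposal is correct and follows the paper's argument, which is just the combination of \cref{thm:regularFlat} and \cref{cor:flatrigid}, with Kunz's theorem applied to the perfect tilt tower left implicit (as in the proof of \cref{thm:regularFlat}). Your extra check that $R_i^{s.\flat}$ is Noetherian fills in a point the paper leaves unstated, and it needs no completeness of $R_i$: it uses only the $I_0^{s.\flat}$-adic completeness of the small tilt, $R_i^{s.\flat}/I_0^{s.\flat}R_i^{s.\flat}\cong R_i/I_0R_i$, and the Bourbaki lemma.
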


\begin{proof}
This is a direct consequence of \cref{thm:regularFlat,cor:flatrigid}.
\end{proof}

Next we show the following Kunz-type criterion of regular rings in terms of perfectoid towers.

\begin{theorem}
\label{thm:towerKunz}
Let $R$ be a Noetherian local ring of residue characteristic $p$. Then the following conditions are equivalent.
  \begin{enumerate}
  \item $R$ is regular.
  \item There exists a Noetherian perfectoid tower $\textrm{{\boldmath $R$}}=\{R_i,t_i\}_{i\geq 0}$ arising from a pair $(R,I_0)$ such that for any (or, equivalently, some) $i\geq 0$ the transition map $t_i\colon R_i\to R_{i+1}$ is flat.
  \end{enumerate}
\end{theorem}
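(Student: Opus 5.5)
The direction (2) $\Rightarrow$ (1) is formal from the results above. Given a Noetherian perfectoid tower arising from $(R,I_0)$ with some $t_i$ flat, \cref{cor:flatrigid} (2) shows that $t_0$ is flat. Then \cref{thm:regularFlat} with $i=0$, using that $R_0=R$ and $R_1$ are Noetherian, shows that $R$ is regular. The parenthetical ``any or some'' equivalence is also exactly \cref{cor:flatrigid} (2). So the real content is (1) $\Rightarrow$ (2): we must construct, for an arbitrary regular local ring $R$ of residue characteristic $p$, a Noetherian perfectoid tower arising from $(R,I_0)$ for a suitable $I_0$. Once it exists, \cref{thm:regularFlat} gives flatness of every $t_i$, since regularity propagates along the tower.

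For the construction I split into cases. If $R$ has characteristic $p$, take $I_0=(0)$ and the perfect tower $R\xr{\varphi}R\xr{\varphi}\cdots$ of \cref{ex:perfectoidtower} (1). This is Noetherian, and its transition maps are flat by Kunz's theorem (or by \cref{thm:regularFlat}).

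If $R$ has equal characteristic $0$, residue characteristic $p$ is impossible. If the characteristic is $p^n$ with $n\ge 2$, then $p$ is a nonzero nilpotent, which cannot occur in a regular (hence domain) local ring. So the remaining case is mixed characteristic $(0,p)$. Here the plan is to take $I_0=(0)$ whenever possible, but this only works in characteristic $p$. In mixed characteristic I will instead take $I_0=(p)$ and build a tower over $R$ itself, not just over $\widehat R$.

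The natural idea is to adjoin $p$-power roots of a regular system of parameters together with $p$-power roots for the residue field. Concretely, choose a regular system of parameters $x_1,\dots,x_d$ of $R$, and set
\[
R_i\coloneqq R[T_1,\dots,T_d]/(T_1^{p^i}-x_1,\dots,T_d^{p^i}-x_d),
\]
localized at the unique maximal ideal over $\mathfrak m$. If $p\in\mathfrak m\setminus\mathfrak m^2$ (the unramified case), one of the $x_j$ can be chosen to be $p$. The transition maps are then free, hence flat, and the $R_i$ are Noetherian. The axioms \textbf{(a)}--\textbf{(g)} must then be checked, with $I_1=(p^{1/p})$ or a suitable root of $p$.

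The main obstacle is the residue field. If $k$ is not perfect, surjectivity of the Frobenius projections \textbf{(d)} fails for the naive tower, because elements of $k$ have no $p$-th roots in $R_i/pR_i$. To repair this I would also adjoin $p$-th roots of lifts of a $p$-basis of $k$, which is only possible cleanly when $[k:k^p]<\infty$. In general I would fall back on the known complete case, \cref{ex:perfectoidtower} (2), via the following reduction. Passing to the completion $\widehat R$ is harmless: $\widehat R$ is regular, and $R\to\widehat R$ is faithfully flat. However, the statement asks for a tower with $R_0=R$ itself. So I would aim to show that the tower of \cref{ex:perfectoidtower} (2) can be ``descended'', or else replaced by a fiber-product modification as in \cref{ex:perfectoidtower} (4) starting from $R$.

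I expect precisely this step to be the delicate point: producing a tower whose $0$-th layer is literally $R$ when $R$ is not complete and $k$ is imperfect. My first attempt would be to adjoin the roots $x_j^{1/p^i}$ and $p$-basis roots over $R$ and verify \textbf{(d)}--\textbf{(g)} modulo $p$. In that verification the key computation is that $R_{i+1}/pR_{i+1}$ is the relative Frobenius twist of $R_i/pR_i$.
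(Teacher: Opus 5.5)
The gap is in (1)$\Rightarrow$(2). Your (2)$\Rightarrow$(1) and the ``any/some'' clause are fine and match the paper: \cref{cor:flatrigid} (2) moves flatness to $t_0$, and \cref{thm:regularFlat} at $i=0$ gives regularity of $R$. For (1)$\Rightarrow$(2), outside characteristic $p$ (the perfect tower) and complete $R$ (\cref{ex:perfectoidtower} (2)), you never produce a tower with $R_0=R$. The paper's argument is simply to fix a regular system of parameters and take the tower of \cref{ex:perfectoidtower} (2), namely $C(k^{1/p^i})\llbracket x_1^{1/p^i},\ldots,x_d^{1/p^i}\rrbracket/(p-f)$. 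Flatness is then automatic by \cref{thm:regularFlat}. That construction removes both obstacles you met. The residue field is handled by the Cohen rings $C(k^{1/p^i})$ and completed power series, not by adjoining roots of chosen elements. In the ramified case, $I_1$ is generated by a lift $\varpi$ of $\overline{f}^{1/p}$ coming from the presentation $p=f(x_1,\ldots,x_d)$, and this $\varpi$ satisfies $\varpi^p\in pR_1^{\times}$. Admittedly, \cref{ex:perfectoidtower} (2) is formulated for complete $R$, and the paper's one-line proof does not treat the non-complete case separately, so the issue you raise is real. But your proposal does not settle it.

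Your suggested direct construction does not work as stated. For $R_1=R[\tilde b_\lambda^{1/p},x_j^{1/p}]$ one computes $\mathrm{Im}(F_0)=(R/pR)^p[\overline{b}_\lambda,\overline{x}_j]$. So \textbf{(d)} forces $R/pR$ to be generated over its subring of $p$-th powers by these elements.
\begin{itemize}
\item This already fails for $R=C(k)\llbracket x_2,\ldots,x_d\rrbracket$ with $d\geq 2$ and $[k:k^p]=\infty$. With lifts taken in $C(k)$, the series $\sum_n b_{\lambda_n}\overline{x}_2^{\,pn}$ with distinct $\lambda_n$ is not in that subring; this is why the completed rings are needed.
\item When $[k:k^p]<\infty$, \textbf{(d)} forces $R/pR$ to be $F$-finite. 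That is an excellence-type condition not among your hypotheses.
\item In the ramified case, \textbf{(f-1)} and \textbf{(f-2)} require some $\varpi\in R_1$ with $\varpi^p\in pR_1^{\times}$ and $\varpi\,(R_1/pR_1)=\{y\in R_1/pR_1 : y^p=0\}=\ker(F_0)$. ``A suitable root of $p$'' is never supplied, and without a Cohen presentation there is no evident candidate.
\item The detour through $\widehat R$ is not an argument. No descent is given, and \cref{ex:perfectoidtower} (4) only modifies the rings along the reduced special fibre, so it cannot replace $\widehat R$ by $R$.
\end{itemize}
As written, (1)$\Rightarrow$(2) is proved only in characteristic $p$ and for complete $R$.
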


\begin{proof}
(1) $\Rightarrow$ (2): Once a regular system of parameters of $R$ is chosen, we only have to construct a perfectoid tower as in \cref{ex:perfectoidtower} (2).

(2) $\Rightarrow$ (1): Since the Noetherian ring $R_1$ is local by \cref{prop:towerLocal}, we can apply \cref{thm:regularFlat} to deduce that $R$ is regular.
\end{proof}

\begin{remark}
\label{rem:anotherpf}
\cref{thm:towerKunz}---except for the parenthetical assertion---can be deduced from the $p$-adic Kunz's theorem of Bhatt--Iyenga--Ma (\cite[Theorem 4.7]{BIM19}). Indeed, the proof there shows the implication``(1) $\Rightarrow$ (2).'' Conversely, if there exists a perfectoid tower $\textrm{{\boldmath $R$}}=\{R_i,t_i\}_{i\geq 0}$ as in (2), then the perfectoid $R$-algebra $R\to\wh{R_\infty}$ is faithfully flat by \cite[Lemma A.3]{HIS26}.
\end{remark}

By using \cref{thm:towerKunz}, we give another proof of the fact that for any regular local ring $R$ of residue characteristic $p$ and any prime ideal $\p$ containing $p$ the localization $R_\p$ is also a regular local ring. This is a generalization of \cite[Corollary 2.2]{Kun76}, which proves the same statement for regular local rings over $\F_p$ by using Kunz's theorem. Note
that we do not use Auslander--Buchsbaum--Serre's regularity criterion because so does \cref{thm:towerKunz}.

\begin{proposition}
\label{prop:RLRloc}
Let $R$ be a regular local ring of residue characteristic $p$, and $\p\subset R$ a prime ideal containing $p$. Then the local ring $R_\p$ is also regular.
\end{proposition}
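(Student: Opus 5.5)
We follow the pattern of Kunz's proof in characteristic $p$: produce a perfectoid tower over $R$ with flat transition maps, localize it at $\p$, and then apply \cref{thm:towerKunz} to $R_\p$.

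\emph{Step 1: a tower over $R$.} By \cref{thm:towerKunz} (1) $\Rightarrow$ (2), there is a Noetherian perfectoid tower $\textrm{{\boldmath $R$}}=\{R_i,t_i\}_{i\geq 0}$ arising from some pair $(R,I_0)$ whose transition maps $t_i$ are all flat. We may take $I_0=(p)$ when $R$ has mixed characteristic and $I_0=(0)$ when $R$ has characteristic $p$. For this we use \cref{ex:perfectoidtower} (1) and (2): in characteristic $p$, regularity makes $R$ reduced, and Kunz's theorem gives flatness of $\varphi$.

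One subtlety is that \cref{ex:perfectoidtower} (2) is stated only for complete rings. If that construction does not apply directly to a non-complete $R$, I would handle the non-complete case separately. The most convenient route is to adjoin $p$-power roots of a regular system of parameters containing $p$ (or with $p\equiv f$) in a way that keeps each $R_i$ Noetherian and local. Alternatively, I would reduce to the complete case: completion is faithfully flat, and $\p\widehat R$ has a minimal prime $\frakP$ lying over $\p$. Then $R_\p\to\widehat R_{\frakP}$ is faithfully flat, so regularity of $\widehat R_{\frakP}$ descends to $R_\p$. This descent uses only the standard characterization of regularity through $\m/\m^2$ and flat base change of the residue field, not the Auslander--Buchsbaum--Serre criterion.

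\emph{Step 2: localize the tower.} Let $S=R\setminus\p$. Since $p\in\p$, we have $I_0\cap S=\emptyset$ in both cases: $I_0=(p)\subset\p$, or $I_0=0$. By \cref{ex:perfectoidtower} (5), $S^{-1}\textrm{{\boldmath $R$}}=\{S^{-1}R_i\}$ is then a perfectoid tower arising from $(R_\p,I_0R_\p)$. Each $S^{-1}R_i$ is Noetherian as a localization of a Noetherian ring. Each $S^{-1}t_i$ is flat as a localization of a flat map. Finally, $R_\p$ is local of residue characteristic $p$, because $p\in\p R_\p$.

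\emph{Step 3: conclude.} Applying \cref{thm:towerKunz} (2) $\Rightarrow$ (1) to $S^{-1}\textrm{{\boldmath $R$}}$ shows that $R_\p$ is regular. Its hypothesis asks for a Noetherian tower, and we should check whether the paper's definition of a Noetherian tower requires each layer to be local. If it does, $S^{-1}R_1$ is local by \cref{prop:towerLocal} (1), provided the Zariskian condition \textbf{(e)} holds. That condition comes with \cref{ex:perfectoidtower} (5); if it needs checking, we could instead apply \cref{thm:regularFlat} directly after localizing at the maximal ideal.

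The main obstacle is Step 1 for a non-complete $R$, namely constructing a Noetherian perfectoid tower with flat transitions. I would first try to settle it with the completion-and-descent reduction described there.
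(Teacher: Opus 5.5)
Your proof follows the paper's: reduce to a complete $R$, take the tower of \cref{ex:perfectoidtower}~(2) arising from $(R,p)$ (or the perfect tower in characteristic $p$, where no completion is needed), localize it at $\p$ via \cref{ex:perfectoidtower}~(5), and apply \cref{thm:towerKunz}. The only divergence is the reduction to the complete case. The paper delegates it to the argument of \cite[Proposition 5.17]{IN26}, while you descend along the faithfully flat local map $R_\p\to\widehat R_\frakP$.

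That descent is true, but your justification of it is not. Since $\frakP$ is only minimal over $\p\widehat R$, the closed fibre $\widehat R_\frakP/\p\widehat R_\frakP$ is the Artinian local ring at $\frakP$ of the formal fibre of $R$ over $\p$. It is a field only if that formal fibre is reduced at $\frakP$; this holds for excellent $R$ but is not automatic in general. Without $\p\widehat R_\frakP$ being the maximal ideal, the comparison of embedding dimensions breaks down. The fact you actually need is the general descent of regularity along flat local maps \cite[Theorem 23.7]{Mat2}, and its proof goes through Serre's characterization of regularity by finite global dimension.

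So your argument does prove the stated proposition, but the remark that this step avoids the Auslander--Buchsbaum--Serre criterion is unjustified as written. The $\m/\m^2$ argument does work for $R\to\widehat R$ itself, whose closed fibre is the residue field.

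Your other suggestion for non-complete $R$, adjoining $p$-power roots of a regular system of parameters, violates condition \textbf{(d)} as soon as the residue field $k$ is imperfect. The residue field of $R_1$ is still $k$, so a unit of $R$ whose residue is not a $p$-th power in $k$ cannot become a $p$-th power modulo $I_0R_1$. This is why \cref{ex:perfectoidtower}~(2) works with complete rings built on the Cohen rings $C(k^{1/p^i})$.
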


\begin{proof}
By a similar argument as in the proof of \cite[Proposition 5.17]{IN26}, we may assume that the local ring $R$ is complete. Then we know that there exists a perfectoid tower $\{R_i,t_i\}_{i\geq 0}$ arising from $(R,p)$ (\cref{ex:perfectoidtower} (2)). Since $\{R_{i,\p},t_{i,\p}\}_{i\geq 0}$ is a perfectoid tower arising from $(R_\p,p)$ by \cref{ex:perfectoidtower} (5),  Then we apply \cref{thm:towerKunz} to conclude that the local ring $R_\p$ is regular.
\end{proof}

Note that R.~Ishizuka and K.~Nakazato also gave another proof of \cref{prop:RLRloc} without Auslander--Buchsbaum--Serre's regularity criterion (\cite[Proposition 5.17]{IN26}). While their proof relies on the methods from homotopy theory, our proof above is based on an elementary argument, such as that appeared in the proof of \cref{thm:LurKunz}.

\subsection{Weakly proregular sequences}

In this section, we study weakly proregular sequences in perfectoid towers.

\begin{definition}[{\cite[Definition 2.3]{Sch03}}]
Let $A$ be a ring, and $M$ an $A$-module.
We say that a sequence $\bm{x}=x_1,\ldots,x_r$ of elements in $A$ is \emph{weakly proregular on $M$} if for each $q>0$ the projective system of Koszul homology $\{H_q(\bm{x}^n;M)\}_{n\geq 0}$ is pro-zero, where $\bm{x}^n\coloneqq x_1^n,\ldots,x_r^n$. In other words, for each $n\geq 0$, there exists an integer $m\geq n$ such that the canonical map $H_q(\bm{x}^m;M)\to H_q(\bm{x}^n;M)$ is zero.
\end{definition}

For example, every sequence of elements in a Noetherian ring $A$ is weakly proregular on $A$.
In general, the notion of weakly proregular sequences is closely related to the relationship between local cohomology and \v{C}ech cohomology. The vanishing of local cohomology implies the following rigidity result on $\Tor$-modules:

\begin{proposition}[{\cite[Proposition 3.3]{CIM19}; see also \cite[2.3]{BIM19}}]
\label{prop:CIM}
Let $(R,\m,k)$ be a Noetherian local ring, and $M$ a \emph{(}not necessarily finitely generated\emph{)} an $R$-module. Set
\[
s(M)\coloneqq \sup\{q\mid H_\m^q(M^\vee)\neq 0\}
\]
where $M^\vee=\Hom_R(M,E_R(k))$ is the Matlis dual of $M$. If $\Tor^R_q(M,k)=0$ for some $q\geq s(M)$, then $\Tor^R_{q'}(M,k)=0$ for all $q'\geq q$.
\end{proposition}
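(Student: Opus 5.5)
The plan is to compute $\Tor^R_\bullet(M,k)$ through Matlis duality and a Koszul/local cohomology spectral sequence, and then use the fact that a (possibly infinite) minimal free resolution over a Noetherian local ring stops as soon as one Betti number vanishes. Set $E=E_R(k)$ and $(-)^\vee=\Hom_R(-,E)$. Since $E$ is injective, Matlis duality gives
\[
\Tor^R_q(M,k)^\vee\cong \Ext^q_R(k,M^\vee)
\]
for all $q$. The functor $(-)^\vee$ is faithful: $N^\vee=0$ forces $N=0$, because $E$ is an injective cogenerator. So it suffices to show the following: if $\Ext^q_R(k,N)=0$ for some $q\geq s$, where $N=M^\vee$ and $s=\sup\{j\mid H^j_\m(N)\neq0\}$, then $\Ext^{q'}_R(k,N)=0$ for all $q'\geq q$.

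For this I would use the derived identity
\[
\mathrm{RHom}_R(k,N)\simeq \mathrm{RHom}_R(k,\mathrm{R}\Gamma_\m N),
\]
which holds because $k$ is $\m$-torsion and $R$ is Noetherian. By hypothesis $\mathrm{R}\Gamma_\m N$ has cohomology concentrated in degrees $\le s$, so its truncations give a triangle
\[
\tau^{<s}\mathrm{R}\Gamma_\m N\to \mathrm{R}\Gamma_\m N\to H^s_\m(N)[-s].
\]
Since $\mathrm{RHom}_R(k,-)$ of a complex concentrated in degrees $<s$ lives in degrees $<s+\dim$, I should instead proceed directly. Take the minimal free resolution $F_\bullet\to k$ with $F_j=R^{\beta_j}$. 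Then $\Ext^j_R(k,N)$ is the cohomology of $N^{\beta_j}$, and minimality places the differentials in $\m$.

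The cleanest route is to replace $N$ by an injective resolution and use the standard Ext-rigidity argument (Foxby / Christensen–Iyengar–Marley). Take a minimal injective resolution $I^\bullet$ of $\mathrm{R}\Gamma_\m N$. Since this complex is $\m$-torsion, it consists of direct sums of copies of $E$, and it has length $s$ beyond degree $0$ up to the usual shifts. Then $\Hom(k,I^\bullet)$ has zero differential by minimality, so
\[
\Ext^j_R(k,N)=\Hom(k,I^j)=k^{\mu_j},
\]
where $\mu_j$ is the $j$-th Bass number. Vanishing of $\Ext^q$ means $I^q=0$. For $q\geq s$, i.e.\ in a range where the complex is acyclic beyond $q$, we have $I^{>q}$ exact. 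Since $I^q=0$, the part $I^{\ge q+1}$ is then an exact complex of injectives starting at degree $q+1$ that is also minimal, so it must vanish; hence $\mu_{q'}=0$ for all $q'\geq q$. To justify that last step, note that the kernel at degree $q+1$ is $0$, which makes the minimal injective resolution of $0$ trivial.

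The main obstacle is making precise the existence of a minimal injective resolution for an unbounded-above (here bounded-above) torsion complex with cohomology in degrees $\le s$, and checking that $I^j=0$ for $j>s$ cannot be assumed a priori, since injective dimension can be infinite. I would handle this as follows. Choose a minimal semi-injective resolution of the complex (this exists over Noetherian rings by Krause / Avramov–Foxby). In degrees $\ge s$ the complex $I^{\ge s}$ is then a minimal injective resolution of the module $Z=\Ker(I^s\to I^{s+1})$ shifted, because cohomology vanishes above $s$. Minimal injective resolutions of a module satisfy $I^q=0\Rightarrow I^{q'}=0$ for $q'>q$, since each $I^{j+1}$ is the injective hull of the cokernel at stage $j$. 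That gives the claim for $q\geq s$.
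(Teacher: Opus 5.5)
The paper gives no proof of its own (it only cites Christensen--Iyengar--Marley), and your final argument is correct and essentially the standard one behind that citation. Matlis duality reduces the claim to rigidity of $\operatorname{Ext}^\bullet_R(k,M^\vee)$, and this rigidity holds because a minimal injective model of $\mathrm{R}\Gamma_\m(M^\vee)$ built from copies of $E_R(k)$ is, in degrees $\geq s(M)$, a minimal injective resolution of a module. Two simplifications are worth making. You can avoid minimal semi-injective resolutions by taking $\Gamma_\m(J)$ for a minimal injective resolution $J$ of $M^\vee$, which is already a bounded-below minimal complex with $\Gamma_\m(J^j)\cong E_R(k)^{(\mu_j)}$ and $H^j(\Gamma_\m J)=H^j_\m(M^\vee)$. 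You should also delete the false aside that this complex ``has length $s$'', which you later retract yourself.
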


Bhatt--Iyengar--Ma combined \cref{prop:CIM} with their precise criterion for detecting finiteness of flat dimension to obtain a sharper characterization in terms of perfectoid algebras. We summarize an important consequence as follows:

\begin{lemma}
\label{lem:BIM}
Let $(R,\m,k)$ be a Noetherian local ring, and $A$ a perfectoid $R$-algebra. Suppose that a system of parameters of $R$ is weakly proregular on $A$.
Then $R$ is regular if $\Tor^R_q(A,k)=0$ for some $q\geq 1$.
\end{lemma}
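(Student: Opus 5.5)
The plan is to run the argument of Bhatt--Iyengar--Ma: combine \cref{prop:CIM} with their criterion for detecting finite flat dimension via perfectoid algebras. Let $\bm{x}=x_1,\ldots,x_d$ be a system of parameters of $R$ that is weakly proregular on $A$, and set $\m'=(\bm{x})$, so that $\m'$ is $\m$-primary.

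\textbf{Step 1: bound $s(A)$.} Since $\bm{x}$ is weakly proregular on $A$, the local cohomology $H^j_{\m}(A)=H^j_{\m'}(A)$ agrees with the \v{C}ech cohomology of $A$ along $\bm{x}$ (Schenzel's theorem \cite{Sch03}). I would pass this comparison to the Matlis dual $A^\vee$. The Koszul-homology pro-systems of $A^\vee$ are Matlis duals of the Koszul-cohomology ind-systems of $A$, and these are controlled by the weak proregularity hypothesis. This should give $H^j_\m(A^\vee)=0$ for $j>d$, that is, $s(A)\le d=\dim R$. Then, by \cref{prop:CIM}, the vanishing of $\Tor^R_q(A,k)$ for a single $q\ge 1$ can be propagated to $\Tor^R_{q'}(A,k)=0$ for all $q'\ge\max(q,d)$. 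The gap between $q$ and $d$ is the difficulty: if $q<d$, the proposition does not apply directly.

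\textbf{Step 2: handle small $q$.} Here I would use the perfectoid structure. By \cite[Theorem 4.7]{BIM19}, or its proof, finite flat dimension of a perfectoid $A$ with $\m A\ne A$ forces $R$ to be regular. So it suffices to show that $A$ has finite flat dimension, i.e.\ that $\Tor^R_{q'}(A,k)=0$ for $q'\gg0$. To get down from an arbitrary $q\ge1$, I would use the BIM vanishing argument. For perfectoid $A$, Koszul homology $H_j(\bm{x};A)$ is almost-related to $H_j(\bm{x}^{1/p^n};A)$ via Frobenius-type compatible $p$-power roots after extending $A$, which is harmless by faithful flatness of a suitable perfectoid cover. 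Combined with weak proregularity, this lets one show that $\Tor^R_q(A,k)=0$ forces the whole Koszul complex $K(\bm{x};A)\otimes^{\mathbf{L}}k$ to be concentrated in low degrees. Concretely, I would imitate \cite[Theorem 4.13]{BIM19}: the vanishing of one $\Tor$ implies $\Tor^R_{q'}(A,k)=0$ for all $q'\ge q$ via the depth-sensitivity of Koszul homology on $A$. The equality $s(A)\le q$ is arranged by replacing $A$ with a derived completion along $\bm{x}$, which is still perfectoid and for which the weakly proregular hypothesis gives the needed local-cohomology vanishing.

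\textbf{Step 3: conclude.} Having $\Tor^R_{q'}(A,k)=0$ for $q'\gg0$, the BIM criterion \cite[Theorem 4.7]{BIM19} for perfectoid algebras gives that $R$ is regular, using $\m A\ne A$, which follows from the nonvanishing of the derived fiber.

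The main obstacle is Step 2, bridging a single low-degree vanishing to the range where \cref{prop:CIM} applies. This is exactly where the weakly proregular hypothesis must enter, and I expect it to be the delicate bookkeeping.
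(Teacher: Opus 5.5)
Your Step 2 is where the proof has to happen, and it is not an argument. The appeals to ``almost'' comparisons between $H_j(\bm{x};A)$ and $H_j(\bm{x}^{1/p^n};A)$, to a faithfully flat perfectoid cover, to ``depth-sensitivity of Koszul homology'', and to arranging $s(A)\le q$ by a derived completion do not add up to a proof that a single vanishing $\Tor^R_q(A,k)=0$ with $1\le q<d$ propagates to all higher degrees. Moreover, Step 1 extracts nothing from the hypothesis: $H^j_\m(M)=0$ for $j>d$ holds for \emph{every} $R$-module $M$, since the \v{C}ech complex on $d$ elements has length $d$. So, as written, your proposal only covers the case $q\ge d$, and in that case weak proregularity is never used.

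The missing idea is that weak proregularity gives $s(A)\le 0$, which makes Step 2 unnecessary. The useful duality pairs the Koszul \emph{cohomology} of $A^\vee$ with the Koszul \emph{homology} of $A$. Since $E=E_R(k)$ is $R$-injective, adjunction gives
\[
H^j(\bm{x}^n;A^\vee)\cong \Hom_R\bigl(H_j(\bm{x}^n;A),E\bigr),
\]
compatibly in $n$. As $R$ is Noetherian, $H^j_\m(A^\vee)$ is the \v{C}ech cohomology along $\bm{x}$, namely $\varinjlim_n H^j(\bm{x}^n;A^\vee)$. This colimit vanishes for every $j\ge 1$ because $\{H_j(\bm{x}^n;A)\}_n$ is pro-zero. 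The paper phrases the same fact as follows: $A^\vee$ is an injective $A$-module, and \v{C}ech cohomology of injective $A$-modules along a sequence weakly proregular on $A$ vanishes in positive degrees \cite[Lemma 4.10]{BIM19}. Hence \cref{prop:CIM} applies to every $q\ge 1$ and gives $\Tor^R_{q'}(A,k)=0$ for all $q'\ge q$. Then \cite[Theorem 4.1]{BIM19} concludes; this is the paper's proof.

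Finally, your derivation of $\m A\neq A$ in Step 3 is circular. If $\m A=A$, then every $\Tor^R_i(A,k)$ vanishes (for instance when $A=0$), so this condition cannot come from the hypotheses. It has to be taken as an implicit extra assumption; without it the statement fails for $A=0$.
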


\begin{proof}
Since the Matlis dual $A^\vee=\Hom_R(A,E_R(k))$ is an injective $A$-module, and it follows from the assumption that $H_\m^q(A)=0$ for any $q\geq 1$ (\cite[Lemma 4.10]{BIM19}). Then the vanishing $\Tor^R_q(A,k)=0$ for some $q\geq 1$ implies that $\Tor^R_q(A,k)=0$ for all $q\geq 1$ by \cref{prop:CIM}. Then $\Tor^R_q(k,k)=0$ because $A$ is perfectoid (\cite[Theorem 4.1]{BIM19}).
\end{proof}

In \cite{BIM19}, \cref{lem:BIM} applied in two cases: $A=R^+$ is the absolute integral closure of a domain $R$,\footnote{More precisely, we should take $A$ to be the $p$-adic completion of $R^+$. However, \cref{lem:BIM} holds in the case $A=R^+$ (see \cite[Remark 4.3]{BIM19}).} and $A=R_{\mathrm{perf}}$ is the perfect closure of an $\F_p$-algebra $R$.
We will consider more general perfectoid rings.

Let us recall the following basic notion for perfectoid rings.
For a $p$-adically complete ring $A$, we define $A^\flat\coloneqq\varprojlim_\varphi(A/pA)$, which is called the \emph{tilt} of $A$ when $A\not\supset\F_p$. Due to the $p$-adic completeness of $A$, the canonical projection $A\to A/pA$ induces an isomorphism of multiplicative monoids $\varprojlim_{x\mapsto x^p}A \xr{\cong} A^\flat$, and let
\[
\sharp\colon A^\flat \to A;\quad x\mapsto x^\sharp
\]
denote the resulting multiplicative projection onto the $0$-th coordinate. Thus elements of the form $x^\sharp$ ($x\in A^\flat$) precisely are elements $y$ having a compatible system of $p$-power roots $\{y^{1/p^n}\}_{n\geq 0}$. Here we include the following result of Gabber--Ramero.

\begin{proposition}[{\cite[Propositions 7.8.25 (i) and 16.4.10 (ii)]{GR24}}]
\label{prop:WPR_GR}
Let $A$ be a perfectoid ring, and $\bm{x}=x_1,\ldots,x_r$ a sequence of elements in $A^\flat$. Then $\bm{x}^\sharp\coloneqq x_1^\sharp,\ldots,x_r^\sharp$ is a weakly proregular sequence on $A$.
\end{proposition}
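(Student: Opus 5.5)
The plan is to reduce the statement to a uniform annihilation property of Koszul homology and to establish that property first for perfect $\F_p$-algebras, then transport it to $A$ by tilting. Concretely, it suffices to find, for each $n$, some $m\geq n$ such that the transition map $H_q((\bm{x}^\sharp)^m;A)\to H_q((\bm{x}^\sharp)^n;A)$ vanishes for all $q\geq 1$. Since $\sharp$ is multiplicative, $(\bm{x}^\sharp)^m=(\bm{x}^m)^\sharp$. Replacing $\bm{x}$ by $\bm{x}^n$, we may therefore assume $n=1$. We then aim to show that $m=2$ (or $m=p$) works; to allow for a possible loss in mixed characteristic, we will be willing to replace this by $m=p^c$ for a constant $c$ independent of $\bm{x}$.

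\emph{Step 1 (perfect case).} Let $B$ be a perfect $\F_p$-algebra and $\bm{y}=y_1,\ldots,y_r\in B$. Reducedness gives bounded torsion: if $y^Nb=0$ then $(yb)^N=0$, hence $yb=0$. This handles $r=1$ directly, since the map $H_1(y^2;B)=B[y^2]\to B[y]$ is multiplication by $y$, which is zero. For general $r$, we will use that $\varphi$ is an automorphism of $B$ carrying $K(\bm{y};B)$ isomorphically onto $K(\bm{y}^p;B)$. Combining this with the usual comparison maps between the complexes $K(\bm{y}^{p^k};B)$, we will show the following: every cycle of $K(\bm{y}^{p};B)$ in positive degree maps to a boundary in $K(\bm{y};B)$. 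Equivalently, the map $\varinjlim_{\varphi}K(\bm{y};B)\simeq K(\bm{y}^{1/p^\infty};B)$ is acyclic in positive degrees, and this acyclicity holds already at a finite stage. The route to this is an induction on $r$, using the long exact sequence of the cone $K(y_1,\ldots,y_r)=\mathrm{Cone}(y_r\colon K(y_1,\ldots,y_{r-1})\to K(y_1,\ldots,y_{r-1}))$. The key input at each stage is that each $H_q(y_1,\ldots,y_{r-1};B)$ is again a module over the perfect ring $B$ whose $y_r$-power torsion is killed by $y_r$. We will verify this by realizing the Koszul homology as that of the perfect ring $B/(y_1^{1/p^\infty},\ldots)$ up to the transition maps.

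\emph{Step 2 (tilting).} For a perfectoid ring $A$, the ring $A^\flat$ is perfect, and there is a generator $d$ of $\ker(\theta)$ with $A^\flat/d\cong A/p$, compatibly with $\sharp$ modulo $p$. Apply Step 1 to $B=A^\flat$ and $\bm{y}=\bm{x}$. Koszul complexes commute with base change, so $K(\bm{x};A^\flat)\otimes^L_{A^\flat}A^\flat/d\simeq K(\bm{x};A^\flat/d)\simeq K(\bm{x}^\sharp;A)\otimes^L_{\Z_p}\F_p$. Here $d$ is a nonzerodivisor on $A^\flat$ (the case $A\supset \F_p$ is trivial). Hence pro-vanishing for $K(\bm{x};A^\flat)$ yields pro-vanishing, with a bounded loss of indices, for $K(\bm{x}^\sharp;A)\otimes^L\F_p$.

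\emph{Step 3 (lifting).} We would then lift from $A/p$ to $A$. The ring $A$ is $p$-complete, its $p$-power torsion is bounded (killed by $p$, being a perfectoid ring), and $H_q(\bm{x}^\sharp;A)$ is computed from the derived $p$-complete complex $K(\bm{x}^\sharp;A)$. Under these conditions, pro-zero-ness modulo $p$ with a uniform index shift should propagate to pro-zero-ness integrally. The argument would be a d\'evissage along $A\to A/p^k$ using the exact triangles for multiplication by $p$, followed by passage to the limit. This lifting is the step I expect to be the main obstacle: pro-zero systems are not a priori stable under derived limits, and index shifts can accumulate with $k$. To control this, I would first try to exploit the fact that $p$ itself has the form $(p^\flat)^\sharp\cdot(\text{unit})$ in $A$, after replacing $A$ by a suitable $p$-completely faithfully flat perfectoid cover if necessary. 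Then $p$ can be appended to the sequence $\bm{x}^\sharp$. Step 2 applied to the enlarged sequence $\bm{x},p^\flat$ gives uniform pro-vanishing, and the Koszul complex on the enlarged sequence is the cone of $p$ on $K(\bm{x}^\sharp;A)$. Combined with bounded $p^\infty$-torsion, this should force the uniform bound on $K(\bm{x}^\sharp;A)$ itself.
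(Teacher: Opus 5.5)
The paper gives no proof of this proposition; it quotes Gabber--Ramero. Your outline has a genuine gap, and it appears already in Step~2, before the lifting problem you flag in Step~3: the intermediate claim of Step~2 is false. The complex $K(\bm x^n;A^\flat)\otimes^L_{A^\flat}A^\flat/d$ is the cone of $d$ on $K(\bm x^n;A^\flat)$. Hence its $H_1$ is an extension of $H_0(\bm x^n;A^\flat)[d]=(A^\flat/\bm x^nA^\flat)[d]$ by $H_1(\bm x^n;A^\flat)/d$.

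Pro-vanishing of the positive-degree Koszul homology of $A^\flat$ says nothing about the first term, because that term involves the \emph{fixed} element $d$. Step~1 applied to $\bm x,d$ would only control $\{H_q(\bm x^n,d^n;A^\flat)\}_n$. This term is genuinely not pro-zero in general, so $K(\bm x^\sharp;A)\otimes^L\F_p$ need not have pro-zero positive homology. Here is a counterexample.
\begin{itemize}
\item Let $C$ be a complete algebraically closed nonarchimedean field of mixed characteristic whose residue field $k$ admits a nontrivial valuation ring $V$.
\item Let $A=\mathcal{O}_C\times_kV$ be the preimage of $V$ in $\mathcal{O}_C$. This is a $p$-torsion-free rank-$2$ perfectoid valuation ring.
\item Pick $x\in A^\flat$ with $x^\sharp\in\mathcal{O}_C^\times$ reducing to a nonzero element of $\mathfrak{m}_V$. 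This is possible since $V$ is perfect.
\item Then $a_m\coloneqq p/x^{\sharp m}\in\mathfrak{m}_C\subset A$ gives a class in $H_1(x^{\sharp m};A/p)=(A/p)[x^{\sharp m}]$.
\item Its image in $H_1(x^{\sharp n};A/p)$ is $p/x^{\sharp n}$, which is not in $pA$ for any $m\geq n\geq 1$, because $x^\sharp\notin A^\times$.
\item Yet $A$ is a domain, so $H_1(x^{\sharp n};A)=0$.
\end{itemize}

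So in Step~3 there is nothing to lift. The mod-$p$ system can fail to be pro-zero even when the integral one vanishes identically. Any d\'evissage along $A\to A/p^k$ that starts from mod-$p$ pro-vanishing therefore cannot work.

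Your proposed repair does not help either. The cone of $p$ on $K(\bm x^{\sharp n};A)$ is the Koszul complex on $\bm x^{\sharp n},p$ with $p$ \emph{fixed}; for $p$-torsion-free $A$ this is again the mod-$p$ complex above. Weak proregularity of $\bm x^\sharp,p$ only concerns $\{H_q(\bm x^{\sharp n},p^n;A)\}_n$, and it does not descend to the smaller ideal $(\bm x^\sharp)$ or to a fixed quotient. Step~2 applied to $\bm x,p^\flat$ fails for the same reason as before.

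What is missing is an argument that handles the interaction of $\bm x^\sharp$ with $p$ integrally. For example, writing $A=W(A^\flat)/\xi$, one must control the $\xi$-torsion in $W(A^\flat)/([\bm x]^n)$ pro-systemically in $n$, not just Koszul homology over $A^\flat$. This is precisely the input the paper takes from Gabber--Ramero, and your outline does not supply it.

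Two smaller points:
\begin{itemize}
\item The key input of Step~1 is false for the $H_0$ term. With $B=\F_p[s^{1/p^\infty}]$, $y_1=s$, $y_2=s^{1/p}$, the module $H_0(y_1;B)=B/sB$ is $y_2$-power torsion but is not killed by $y_2$. So even the perfect case is not established by your induction as written.
\item $d$ need not be a nonzerodivisor on $A^\flat$ when $A$ has $p$-torsion without being an $\F_p$-algebra, e.g.\ $A=\mathcal{O}_C\times\F_p$.
\end{itemize}
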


Once given a perfectoid tower, we have the following result.

\begin{proposition}
\label{prop:WPRsop}
Let $\textrm{{\boldmath $R$}}=\{R_i\}_{i\geq 0}$ be a perfectoid tower arising from a pair $(R,I_0)$, where $R$ is a Noetherian local ring. Assume that either $R$ is $I_0$-torsion free or $I_0=(0)$.
Let $\bm{x}=x_1,\ldots,x_d$ be a system of parameters of $R$. Then $\bm{x}$ is weakly proregular on any perfectoid $\wh{R_\infty}$-algebra $A$.
\end{proposition}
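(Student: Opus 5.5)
The plan is to reduce to \cref{prop:WPR_GR}: replace $\bm{x}$ by a sequence of elements of the form $y^\sharp$ with $y\in A^\flat$, generating an ideal with the same radical in $A$. Weak proregularity depends only on the radical of the ideal generated, provided the sequences are finite. This invariance holds because weak proregularity of $\bm{x}$ on $M$ is equivalent to the vanishing of the higher derived functors comparing \v{C}ech and local cohomology (Schenzel's criterion), and \v{C}ech cohomology depends only on $\sqrt{(\bm{x})}$.

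First, produce the replacement sequence. Consider the tilt $R^{s.\flat}$, which is a Noetherian local ring of dimension $d=\dim R$ in favourable cases. I would argue this via \cref{thm:INStilt} (1): we have $R^{s.\flat}/I_0^{s.\flat}\cong R/I_0$. When $R$ is $I_0$-torsion free, $I_0$ is generated by a nonzerodivisor, so $\dim R=\dim R/I_0+1$. The analogous statement should hold for the tilt, using that $R^{s.\flat}$ is $I_0^{s.\flat}$-torsion free (as in the argument cited from \cite{HIS26}). When $I_0=(0)$ the tower is perfect and everything is characteristic $p$.

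Choose a system of parameters $y_1,\ldots,y_d$ of $R^{s.\flat}$ such that $y_1$ generates $I_0^{s.\flat}$ (in the torsion-free case) and $y_2,\ldots,y_d$ lift a system of parameters of $R/I_0$. The elements $y_j$ are compatible sequences in $\varprojlim_{F}R_i/I_0R_i$, and they map to $\wh{R_\infty}^\flat=\varprojlim_\varphi \wh{R_\infty}/p$, hence to $A^\flat$. The resulting $y_j^\sharp\in A$ should satisfy
\[
\sqrt{(y_1^\sharp,\ldots,y_d^\sharp)A}=\sqrt{\m A}=\sqrt{\bm{x}A}.
\]
To check this, note that modulo $I_0A$ (or modulo $p$) the element $y_j^\sharp$ reduces to the image of $\pr_0(y_j)\in R/I_0$, which generates, together with $I_0$, an $\m$-primary ideal of $R$. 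Moreover, $(y_1^\sharp)$ and $I_0A$ agree up to radical, because $I_0$ is principal and its generator has compatible $p$-power roots along the tower by \textbf{(f)}. Hence $(y^\sharp)$ and $\bm{x}$ generate ideals with the same radical in $A$.

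Finally, apply \cref{prop:WPR_GR} to get weak proregularity of $\bm{y}^\sharp$ on $A$, and transfer it to $\bm{x}$ by radical invariance. The case $I_0=(0)$ is the same argument with $A$ perfect and $\sharp$ the identity on $A^\flat=A$.

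The main obstacle is the radical comparison in the torsion-free case. It requires care that $I_0A$ is, up to radical, generated by a $\sharp$-element; condition \textbf{(f)} gives $I_1^p=I_0R_1$ only up to units, so I would instead use the perfectoid pillars $I_i$ to produce compatible roots in $\wh{R_\infty}$ modulo $p$. It also requires the dimension count for $R^{s.\flat}$.
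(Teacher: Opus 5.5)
Your route is the same as the paper's. You pick $y_1\in R^{s.\flat}$ generating $I_0^{s.\flat}$ and $y_2,\dots,y_d$ whose images under $\mathrm{pr}_0$ form a system of parameters of $R/I_0$. You untilt via $\sharp$, using that $y^\sharp$ is congruent modulo $I_0\widehat{R_\infty}$ to a lift in $R$ of $\mathrm{pr}_0(y)$, and conclude with \cref{prop:WPR_GR} and Schenzel's Corollary 3.3. The paper phrases the comparison as an equality of ideals $\bm{x}^\sharp A=\bm{y}A$ rather than of radicals. Your separate treatment of $I_0=(0)$, where $\sharp$ is the identity, is correct.

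However, the step you flag as the main obstacle is genuinely left open, and your first justification for it is false. \textbf{(f)} only gives $I_1^p=I_0R_1$, i.e.\ roots up to units and modulo $I_0$; it does not give a generator $f_0$ of $I_0$ compatible $p$-power roots. Without $\sqrt{y_1^\sharp A}=\sqrt{f_0A}$ your radical comparison fails, because the congruences $y_j^\sharp\equiv\tilde y_j$ hold only modulo $f_0A$.

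The step closes as follows. Let $f_1$ generate $I_1$.
\begin{itemize}
\item Since $y_1\in\operatorname{Ker}(\mathrm{pr}_0)$, the congruence you already use gives $y_1^\sharp\in f_0\widehat{R_\infty}$.
\item For the reverse inclusion, choose $y_1$ with $\mathrm{pr}_0(y_1)=0$ and $\mathrm{pr}_1(y_1)=\overline{f_1}$. This is possible because $F_0(\overline{f_1})=0$ by \textbf{(f-2)} and all $F_i$ are surjective; you do not need $y_1$ to generate $I_0^{s.\flat}$.
\item The $p$-th root of $y_1$ in the perfect ring $(\widehat{R_\infty})^\flat$ comes from the shifted sequence $(\mathrm{pr}_1(y_1),\mathrm{pr}_2(y_1),\dots)$. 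So the same compatibility of $\sharp$ with projections, one level up, gives $(y_1^{1/p})^\sharp=f_1+f_0z$ with $z\in\widehat{R_\infty}$.
\item Since $p\in f_0R$, the binomial expansion gives
\[
y_1^\sharp=(f_1+f_0z)^p\in f_1^p+f_0^2\widehat{R_\infty}.
\]
\item By \textbf{(f-1)}, $f_1^pR_1=f_0R_1$ in the local ring $R_1$, so $f_1^p=vf_0$ with $v\in R_1^\times$.
\item Hence $y_1^\sharp=f_0(v+f_0w)$. Here $v+f_0w$ is a unit, because $\widehat{R_\infty}$ is local with $f_0$ in its maximal ideal.
\end{itemize}
Therefore $y_1^\sharp A=f_0A$, and so $(\bm{y}^\sharp)A=(f_0,\tilde y_2,\dots,\tilde y_d)A$, the extension of an $\mathfrak{m}$-primary ideal of $R$. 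This is your radical equality, and it is also the unstated ingredient behind the paper's $\bm{x}^\sharp A=\bm{y}A$.

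Two minor points:
\begin{itemize}
\item The parenthetical ``or modulo $p$'' is wrong unless $I_0=(p)$. In general $y^\sharp$ is congruent to an element of $R$ only modulo $I_0\widehat{R_\infty}$.
\item The dimension count for $R^{s.\flat}$ is unnecessary, since nothing in the argument uses that $\bm{y}$ is a system of parameters of $R^{s.\flat}$. You only need $\dim R/I_0=d-1$, which holds because $f_0$ is a nonzerodivisor on $R$.
\end{itemize}
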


\begin{proof}
By \cite[Corollary 3.3]{Sch03}, it suffices to verify that there is some choice of an s.o.p.\ that is weakly proregular on $A$.
By assumption, we can take an s.o.p.\ $\bm{x}=x_1,\ldots,x_d$ of $R^{s.\flat}$ such that $I_0^{s.\flat}=(x_1)$. Now consider the monoidal map $\sharp\colon \left(\wh{R_\infty}\right)^\flat\to \wh{R_\infty}$, which maps $R^{s.\flat}$ into $R+I_0\wh{R_\infty}$ (see \cite{HIS26}). Hence we can write
\[
x_k^\sharp=y_k + f_0z_k\quad (y_k\in R,\ z_k\in\wh{R_\infty}),
\]
where $f_0\in R$ is a generator of $I_0$. Since $\sharp\colon\left(\wh{R_\infty}\right)^\flat\to\wh{R_\infty}$ induces an isomorphism $R^{s.\flat}/I_0^{s.\flat}\xr{\cong}R/I_0$ (\cite[Proposition 3.28]{HIS26}), we have $R^{s.\flat}/(\bm{x}) \xr{\cong} R/(\bm{y})$, and thus $\bm{y}$ is an s.o.p.\ of $R$. Moreover, we have $\bm{x}^\sharp A=\bm{y}A$, and since $\bm{x}^\sharp$ is weakly proregular in $A$ by \cref{prop:WPR_GR}, so is $\bm{y}$. This completes the proof.
\end{proof}

The following theorem is a direct consequence of \cref{prop:WPRsop} and \cref{lem:BIM}.

\begin{theorem}
\label{thm:WPRreg}
Let $(R,\m,k)$ be a Noetherian local ring of residue characteristic $p$. Suppose that there exists a perfectoid tower $\textrm{{\boldmath $R$}}=\{R_i\}_{i\geq 0}$ arising from a pair $(R,I_0)$ such that either $R$ is $I_0$-torsion free or $I_0=(0)$. If there exists a perfectoid $\wh{R_\infty}$-algebra $A$ such that $\Tor^R_q(A,k)=0$ for some $q\geq 1$, then $R$ is regular.
\end{theorem}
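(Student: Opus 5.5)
The plan is to combine \cref{prop:WPRsop} with \cref{lem:BIM}, as announced just before the statement.

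Let $A$ be a perfectoid $\wh{R_\infty}$-algebra with $\Tor^R_q(A,k)=0$ for some $q\geq 1$. The composite $R=R_0\to R_\infty\to \wh{R_\infty}\to A$ makes $A$ a perfectoid $R$-algebra, which is the only structure \cref{lem:BIM} requires.

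First, choose any system of parameters $\bm{x}=x_1,\ldots,x_d$ of the Noetherian local ring $R$. The hypotheses of the theorem, namely that $R$ is local, $\textrm{{\boldmath $R$}}$ is a perfectoid tower arising from $(R,I_0)$, and either $R$ is $I_0$-torsion free or $I_0=(0)$, are exactly those of \cref{prop:WPRsop}. That proposition therefore gives that $\bm{x}$ is weakly proregular on $A$.

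Second, apply \cref{lem:BIM} to $R$, $A$ and $\bm{x}$. Its two inputs are now available: the weak proregularity of a system of parameters on $A$, just established, and the vanishing of a single $\Tor^R_q(A,k)$ with $q\geq 1$, which is assumed. The lemma then yields that $R$ is regular.

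The one point to check is that $A$ is perfectoid as an abstract ring in the sense used by \cite{BIM19}, so that \cref{lem:BIM} and the Gabber--Ramero input behind \cref{prop:WPRsop} both apply. This holds by hypothesis, so the argument reduces to the two citations and I expect no real obstacle. In the case $I_0=(0)$ the tower is perfect and $\wh{R_\infty}=R_{\mathrm{perf}}$, so this case recovers part (1) of \cref{thm:B}.
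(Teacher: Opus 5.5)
Your proposal is correct and is exactly the paper's argument: the paper records this theorem as a direct consequence of \cref{prop:WPRsop}, which makes a system of parameters of $R$ weakly proregular on $A$, followed by \cref{lem:BIM}. One caveat applies equally to the paper's statement: the input from \cite{BIM19} behind \cref{lem:BIM} needs the nondegeneracy condition $\mathfrak{m}A\neq A$, since if $\mathfrak{m}A=A$ (e.g.\ $A=0$) every $\operatorname{Tor}^R_q(A,k)$ vanishes trivially, so this hypothesis must be added rather than treated as automatic.
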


In positive characteristic, we always have perfectoid towers, namely, perfect towers (\cref{ex:perfectoidtower} (1)). Hence we obtain the following corollary, which mildly generalizes \cite[Theorem 4.13 (1)]{BIM19}.

\begin{corollary}
Let $(R,\m,k)$ be a reduced Noetherian local $\F_p$-algebra. If there exists a perfect $R$-algebra $A$ such that $\Tor^R_q(A,k)=0$ for some $q\geq 1$, then $R$ is regular.
\end{corollary}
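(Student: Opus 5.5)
The statement to prove is the last corollary: if $R$ is a reduced Noetherian local $\F_p$-algebra and some perfect $R$-algebra $A$ has $\Tor^R_q(A,k)=0$ for a $q\geq 1$, then $R$ is regular. I plan to reduce it to \cref{thm:WPRreg} applied to the perfect tower of $R$.

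\textbf{Step 1: the tower.} Since $R$ is reduced, the perfect tower $R\xr{\varphi}R\xr{\varphi}R\xr{\varphi}\cdots$ exists. By \cref{ex:perfectoidtower} (1) it is a perfectoid tower arising from the pair $(R,(0))$. So the hypothesis ``$I_0=(0)$'' of \cref{thm:WPRreg} holds. The residue characteristic is $p$ because $R$ is an $\F_p$-algebra.

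\textbf{Step 2: identify $\wh{R_\infty}$.} Here $R_\infty=\varinjlim_\varphi R=R_{\mathrm{perf}}$. It is an $\F_p$-algebra, so $p=0$ in it and its $p$-adic completion is itself; hence $\wh{R_\infty}=R_{\mathrm{perf}}$.

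\textbf{Step 3: make $A$ a perfectoid $R_{\mathrm{perf}}$-algebra.} A perfect $\F_p$-algebra is perfectoid, so $A$ is perfectoid. The structure map $R\to A$ factors uniquely through $R_{\mathrm{perf}}$, by the universal property of the perfection among perfect $R$-algebras: the $i$-th copy of $R$ maps to $A$ via $\varphi_A^{-i}$ composed with $R\to A$, and these maps are compatible because $\varphi_A$ is bijective. This makes $A$ a perfectoid $\wh{R_\infty}$-algebra. The $R$-module structure on $A$ is unchanged, so $\Tor^R_q(A,k)$ is the same module as in the hypothesis.

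\textbf{Step 4: conclude.} All hypotheses of \cref{thm:WPRreg} now hold, so $R$ is regular.

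\textbf{Main obstacle.} The only delicate point is inside the proof of \cref{prop:WPRsop} in the case $I_0=(0)$: there one needs a system of parameters of $R^{s.\flat}$ that is compatible with $\sharp$. Since $R^{s.\flat}\cong R$ and $\sharp$ restricted to $R$ is just the inclusion into $R_{\mathrm{perf}}$, every element of $R$ has compatible $p$-power roots in $A$. Thus any system of parameters of $R$ is of the form $\bm{x}^\sharp$, and \cref{prop:WPR_GR} applies directly. Granting the earlier results as stated, the corollary is routine.
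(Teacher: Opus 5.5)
Your argument is exactly the paper's: apply \cref{thm:WPRreg} to the perfect tower arising from $(R,(0))$, with $\wh{R_\infty}=R_{\mathrm{perf}}$ and $A$ made into an $R_{\mathrm{perf}}$-algebra by the universal property of perfection; your direct treatment of the case $I_0=(0)$ via \cref{prop:WPR_GR} is sound, since every element of a perfect ring is a $\sharp$. One caveat comes from the paper rather than from you: the statement, like \cref{lem:BIM} and \cref{thm:WPRreg}, tacitly needs $\mathfrak{m}A\neq A$, because if some $x\in\mathfrak{m}$ is a unit in $A$ (e.g.\ $A=0$, or $A=(R_x)_{\mathrm{perf}}$ for a non-nilpotent $x\in\mathfrak{m}$), then $\Tor^R_q(A,k)=0$ for all $q$ whether or not $R$ is regular.
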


\section{Further constructions of perfectoid towers from regular rings}
\label{s:const}

In this section, we construct perfectoid towers arising from the pair consisting of regular rings and ideals not generated by $p$. This construction is based on the perfectoid rings given in \cite[Example 2.1.4]{Nak20}, which do not fit into Scholze's original definition of perfectoid algebras.

For a pair $(A,I)$, the associated \emph{Rees algebra} is the graded ring
\[
R(A,I) = \bigoplus_{n\geq 0}I^n
\]
(where $I^0=A$). Suppose $x_1,\ldots,x_r$ generate $I$. Then $\Proj R(A,I)$, which is the blow-up of the affine scheme $\Spec A$ along $I$, is covered by the affine open subsets $\{\Spec R(A,I)_{(x_j)}\}_{j=1,\ldots,r}$, where $R(A,I)_{(x_j)}$ is the graded localization by the multiplicative subset $\{1,x_j,x_j^2,\ldots\}$. Note that if $x_1,\ldots,x_r$ is a regular sequence, then the Rees algebra is given by
\[
R(A,I) = A[Y_1,\ldots,Y_r]/I_2
\begin{pmatrix}
x_1 & \cdots & x_r \\
Y_1 & \cdots & Y_r
\end{pmatrix},
\]
where $Y_1,\ldots,Y_r$ is a set of variables over $A$ (\cite[Corollary of Theorem 2, Theorem 4]{Bar73}). 

\begin{proposition}
\label{prop:Rees}
Let $(A,\m)$ be an unramified complete regular local ring of mixed characteristic $(0,p)$. Take a regular system of parameters $p,x_2,\ldots,x_d$ of $A$, and consider the associated perfectoid tower $\{A_i\}_{i\geq 0}$ \emph{(}\cref{ex:perfectoidtower} (2)\emph{)}. For each $i\geq 0$, let $\m_i$ denote the maximal ideal of $A_i$.
  \begin{enumerate}
  \item $\textrm{{\boldmath $R$}}\coloneqq \{R(A_i,\m_i)\}_{i\geq 0}$ is a preperfectoid tower arising from $(R(A,\m),p)$.
  \item The $i$-th perfectoid pillar $I_i\subset R(A_i,\m_i)$ is $(p^{1/p^i})$, the ideal generated by the degree-$1$ element $p^{1/p^i}\in R(A_i,\m_i)$.
  \item The tilt $\textrm{{\boldmath $R$}}^\flat$ of {\boldmath $R$} is isomorphic to the tower $\{R(A_i^{s.\flat},\m_i^{s.\flat})\}_{i\geq 0}$, where $\m_i^{s.\flat}$ is the maximal ideal of $A_i^{s.\flat}=(A/\m)\llbracket p^{s.\flat},x_2^{s.\flat},\ldots,x_d^{s.\flat}\rrbracket$ for all $i\geq 0$.
  \item The small tilt $I_i^{s.\flat}\subset R(A_i^{s.\flat},\m_i^{s.\flat})$ is $((p^{s.\flat})^{1/p^i})$, the ideal generated by the degree-$1$ element $(p^{s.\flat})^{1/p^i}\in R(A_i^{s.\flat},\m_i^{s.\flat})$.
  \end{enumerate}
\end{proposition}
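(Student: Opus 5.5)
Throughout, I realize the Rees algebra as a subring $R(A_i,\m_i)=A_i[\m_iT_i]\subset A_i[T_i]$. I take the transition maps to be $A_i[\m_iT_i]\to A_{i+1}[\m_{i+1}T_{i+1}]$, extending $A_i\hookrightarrow A_{i+1}$ and sending $T_i\mapsto T_{i+1}^p$. This is well defined because $\m_i\subset\m_{i+1}^p$: indeed $p^{1/p^i}=(p^{1/p^{i+1}})^p$ and likewise for the $x_j$. I read the distinguished element $p$ of the pair as the degree-$1$ element $pT_0$. This reading is forced by (2) at $i=0$ and by \textbf{(f-1)}, since $(p^{1/p}T_1)^p=pT_1^p$ is the image of $pT_0$. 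Set $y_1^{(i)}=p^{1/p^i}$ and $y_j^{(i)}=x_j^{1/p^i}$ for $j\geq 2$. These form a regular system of parameters of $A_i$. Hence, by the presentation recalled before the proposition, $R(A_i,\m_i)=A_i[U_1,\dots,U_d]/I_2$ with $U_j=y_j^{(i)}T_i$. Moreover $A_{i+1}$ is a free $A_i$-module on the monomials $\prod_j (y_j^{(i+1)})^{a_j}$ with $0\le a_j<p$. All ideals in sight ($\m_i^n$, $p\,\m_i^{n}$, and so on) are monomial in the $y$'s over the coefficient ring $W(k^{1/p^i})$. The strategy is to reduce every axiom to a degreewise statement about monomial ideals.

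For (1), I verify \textbf{(a)}--\textbf{(d)}, \textbf{(f)}, \textbf{(g)} of \cref{def:perfectoidtower}; \textbf{(e)} is not claimed, which is why the tower is only preperfectoid. Condition \textbf{(g)} is immediate, since each $R(A_i,\m_i)$ is a subring of the domain $A_i[T_i]$ and hence $pT_0$-torsion free. The ideal $pT_0R_i$ is the ideal generated by $y_1^{(0)}T_i^{p^i}$; in degree $n$ it equals $p\,\m_i^{\,n-p^i}T_i^n$.

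For \textbf{(b)}, I check in each degree $pn$ that
\[
p\,\m_{i+1}^{\,pn-p^{i+1}}\cap \m_i^{\,n}=p\,\m_i^{\,n-p^i}.
\]
Using the free monomial basis, this becomes a combinatorial statement about exponents in $\frac{1}{p^{i+1}}\mathbb{Z}_{\ge0}$. Degrees of $R_{i+1}$ not divisible by $p$ do not meet the image.

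For \textbf{(c)}, take $fT_{i+1}^n$ with $f\in\m_{i+1}^n$ and write $f=\sum c_\alpha y^{\alpha}$. Then
\[
f^p\equiv \sum c_\alpha^p y^{p\alpha}
\]
modulo the binomial cross terms, and these lie in $p\,\m_{i+1}^{pn}\subset pT_0R_{i+1}$ in degree $pn$. Moreover $\sum c_\alpha^p y^{p\alpha}$ lies in $\m_i^n T_i^n$, because the Witt-vector coefficients satisfy $c^p\in W(k^{1/p^i})+pW(k^{1/p^{i+1}})$. The same expansion shows that $F_i$ is surjective, giving \textbf{(d)}. It also identifies $\Ker F_i$ with the set of elements whose $p$-th power lies in $pT_0R_{i+1}$. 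A monomial computation should identify this with the ideal generated by $p^{1/p^{i+1}}T_{i+1}$, which gives \textbf{(f-2)} and, by induction, statement (2).

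I expect \textbf{(f-2)} to be the main obstacle: computing the kernel of Frobenius modulo $pT_0$ and showing it is generated by the degree-$1$ element $p^{1/p^{i+1}}T_{i+1}$. My first attempt is to work monomial by monomial. A monomial $c\,y^{\beta}T^n$, with $|\beta|\ge n$, satisfies $(c\,y^\beta)^p\in p\,\m_{i+1}^{\,pn-p^{i+1}}$ exactly when either the $y_1$-exponent of $\beta$ is at least $1/p^{i+1}$ or $p\mid c$. The first case is divisible by $p^{1/p^{i+1}}T_{i+1}$ inside the Rees algebra, because $|\beta|-1/p^{i+1}\ge n-1$. The second case is already $0$ modulo $pT_0$ in the appropriate degree. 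Non-monomial elements reduce to this case because Frobenius modulo $p$ is additive and the monomial decomposition is direct.

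For (3) and (4), I run the same computation with $A_i^{s.\flat}=k^{1/p^i}\llbracket (p^{s.\flat})^{1/p^i},(x_j^{s.\flat})^{1/p^i}\rrbracket$ in place of $A_i$, now in characteristic $p$ where there are no cross terms. I then identify $\varprojlim_{F}R(A_{i+m},\m_{i+m})/(pT_0)$ degreewise. The degree-$n$ component of the $i$-th small tilt is the inverse limit of the degree-$p^mn$ components, which equals $\m_i^{s.\flat\,n}T^n$. Here I use $A_i^{s.\flat}=\varprojlim_F A_{i+m}/p$ from \cref{ex:perfectoidtower}~(2), together with the fact that the monomial ideals are compatible under $F$. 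Statement (4) then follows from (2) and the definition $I_i^{s.\flat}=\Ker(R_i^{s.\flat}\to R_i/I_i)$.
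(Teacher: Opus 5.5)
\paragraph{The choice $I_0=(pT_0)$ violates \textbf{(a)}.}
You say the degree-$1$ reading is forced, but it is where the argument breaks. The element $p=p\cdot 1$ lies in degree $0$, while $(pT_0)R(A,\m)$ is concentrated in degrees $\geq 1$, so $p\notin I_0$. Indeed $R_i/pT_0R_i$ has degree-$0$ part $A_i$, so it is not an $\F_p$-algebra, and neither $\varphi$ nor the $F_i$ exist. You list \textbf{(a)} among the verified conditions but never check it.

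The failure also shows up in your \textbf{(c)}. The binomial cross terms lie in $p\,\m_{i+1}^{pn}$, but $(pT_0R_{i+1})_{pn}=p\,\m_{i+1}^{pn-p^{i+1}}T_{i+1}^{pn}$ is zero whenever $pn<p^{i+1}$ (e.g.\ $n=0$, or $n=1$ and $i\geq 1$), so nothing kills them there.

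The paper's proof uses the degree-$0$ $p$. In the presentation $C(k^{1/p^i})\llbracket X^{1/p^i},Y^{1/p^i}\rrbracket/(I_2(\cdots)+(p-X_1))$, the variable $X_1$ comes from the first row of the matrix, i.e.\ from $p\in A_i$. The tower of completed Rees algebras is then identified with the log-regular tower of $C(k)\llbracket Q\rrbracket/(p-\xi_1)$ from \cref{ex:perfectoidtower}~(3), which arises from $(-,p)$. All axioms, the pillars $(\xi_1^{1/p^i})$ and the tilt $k^{1/p^i}\llbracket Q^{(i)}\rrbracket$ are inherited from that example rather than checked by hand. Note also that \textbf{(f-1)} holds equally with the degree-$0$ $p$, since $(p^{1/p})^p=p$. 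The ``degree-$1$'' wording in (2) and (4) cannot be read literally alongside (1), so it does not force your choice.

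\paragraph{Two further steps fail even with the degree-$0$ $p$.}
First, \textbf{(f-2)} asks for $\Ker F_i=I_1\cdot(R_{i+1}/I_0R_{i+1})$ for every $i$, with the fixed ideal $I_1=(p^{1/p})$. It does not ask for the ideal generated by $p^{1/p^{i+1}}$, and that element is not in the kernel for $i\geq 1$: $(p^{1/p^{i+1}})^p=p^{1/p^i}\notin pA_{i+1}$, so $F_i(p^{1/p^{i+1}})=p^{1/p^i}\neq 0$. The same happens for $p^{1/p^{i+1}}T_{i+1}$. So the threshold in your monomial criterion is wrong: the kernel consists of elements divisible by $p^{1/p}$, subject to the order condition imposed by $p\,\m_{i+1}^{pn}$. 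The pillars come from the defining relation $F_i(p^{1/p^{i+1}})=p^{1/p^i}$, not from kernels.

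Second, in (3) both $\varphi$ and $\ol{t_i}$ multiply the $T$-degree by $p$, so $F_i$ preserves it. The degree-$n$ part of $R_i^{s.\flat}$ is therefore a limit of degree-$n$ components, not degree-$p^mn$ ones.

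Moreover, the limit is not the direct sum of the degreewise limits. Consider the system whose $m$-th term is $\sum_{n<p^m}p^{2n/p^m}T_m^n\in R_m/pR_m$. It is $F$-compatible, because the summands with $n\geq p^m$ vanish in $R_m/pR_m$. It gives an element of $R_0^{s.\flat}$ with infinitely many nonzero homogeneous components. So a degreewise computation identifies the small tilt only with a completion of $R(A_i^{s.\flat},\m_i^{s.\flat})$, not with the Rees algebra itself.

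With these corrections a direct monomial verification is feasible in principle, but it is considerably heavier than the paper's reduction to the log-regular example.
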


\begin{proof}
By Cohen's structure theorem, we may assume that $A=C(k)\llbracket x_2,\ldots,x_d\rrbracket$, where $C(k)$ is the Cohen ring of $k=A/\m$. Then $A_i=C(k^{1/p^i})\llbracket p^{1/p^i},x_2^{1/p^i},\ldots,x_d^{1/p^i}\rrbracket$. Since $p^{1/p^i}, x_2^{1/p^i},\ldots,x_d^{1/p^i}$ is a regular sequence in $A_i$, the $(\m_i\oplus\m_i\oplus\m_i^2\oplus\cdots)$-adically completed Rees algebra is given by
\begin{align*}
R(A_i,\m_i)^\wedge &= A_i\llbracket Y_1^{1/p^i},\ldots,Y_d^{1/p^i}\rrbracket/I_2
\begin{pmatrix}
p^{1/p^i} & x_2^{1/p^i} & \cdots & x_d^{1/p^i} \\
Y_1^{1/p^i} & Y_2^{1/p^i} & \cdots & Y_d^{1/p^i}
\end{pmatrix}
 \\
&\cong 
\frac{C(k^{1/p^i})\llbracket X_1^{1/p^i},\ldots,X_d^{1/p^i},Y_1^{1/p^i},\ldots,Y_d^{1/p^i}\rrbracket}
{I_2
\begin{pmatrix}
X_1^{1/p^i} & X_2^{1/p^i} & \cdots & X_d^{1/p^i} \\
Y_1^{1/p^i} & Y_2^{1/p^i} & \cdots & Y_d^{1/p^i}
\end{pmatrix}
+(p-X_1)}.
\end{align*}
Let $e_1,\ldots,e_{2d}$ be the standard basis of $\N^{\oplus 2d}$, and set
\[
\xi_j\coloneqq e_1 + e_{j+1} + \cdots + \check{e}_{d+1} + \cdots + e_{d+j},\quad \eta_j\coloneqq  e_{j+1} + \cdots + e_{d+j}\quad (1\leq j\leq d).
\]
Consider the submonoid $Q\coloneqq \langle \xi_1,\ldots,\xi_d,\eta_1,\ldots,\eta_d\rangle\subset\N^{\oplus 2d}$. Then $R(A_i,\m_i)^\wedge \cong C(k^{1/p^i})\llbracket Q^{(i)}\rrbracket/(p-\xi_1)$, and the tower $\{R(A_i,\m_i)^\wedge\}_{i\geq 0}$ is isomorphic to the perfectoid tower associated to the complete local log-regular ring $C(k)\llbracket Q\rrbracket/(p-\xi_1)$ (\cref{ex:perfectoidtower} (3)). From this, all assertions follow easily.
\end{proof}

\begin{corollary}
\label{cor:Rees2}
Consider the situation as in \cref{prop:Rees}, and fix $2\leq k\leq d$.
  \begin{enumerate}
  \item $\textrm{{\boldmath $R_0$}}\coloneqq \{R(A_i,\m_i)_{(x_k)}\}_{i\geq 0}$ is a preperfectoid tower arising from $(R(A,\m)_{(x_k)},\tfrac{p}{x_k})$.
  \item The $i$-th perfectoid pillar $J_i\subset R(A_i,\m_i)_{(x_k)}$ is $\left(\tfrac{p^{1/p^i}}{x_k^{1/p^i}}\right)$.
  \item The tilt of {\boldmath $R_0$} is isomorphic to $\{R(A_i^{s.\flat},\m_i^{s.\flat})_{(x_k^{s.\flat})}\}_{i\geq 0}$
  \item The small tilt $J_i^{s.\flat}\subset R(A_i^{s.\flat},\m_i^{s.\flat})_{(x_k^{s.\flat})}$ is $\left(\tfrac{(p^{s.\flat})^{1/p^i}}{(x_k^{s.\flat})^{1/p^i}}\right)$.
  \end{enumerate}
\end{corollary}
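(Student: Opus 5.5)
The plan is to derive everything from \cref{prop:Rees} by applying the localization stability of \cref{ex:perfectoidtower} (5), or more precisely its graded analogue, degree-$0$ localization at a homogeneous element. The key point is that $x_k\in\m\subset R(A,\m)$ is a degree-$1$ element. Inside the tower its compatible $p$-power roots $x_k^{1/p^i}\in \m_i\subset R(A_i,\m_i)$ are also degree-$1$ elements, and the transition maps of \textbf{R} send $x_k^{1/p^i}$ to $(x_k^{1/p^{i+1}})^p$.

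Rather than localizing the preperfectoid tower abstractly, I would work in the same explicit model as in the proof of \cref{prop:Rees}. The localization $R(A_i,\m_i)[1/x_k]$ is $\Z$-graded, and inverting $x_k^{1/p^i}$ is the same as inverting $x_k$, since $(x_k^{1/p^i})^{p^i}=x_k$. Its degree-$0$ part is
\[
R(A_i,\m_i)_{(x_k)}=A_i\Bigl[\tfrac{p^{1/p^i}}{x_k^{1/p^i}},\tfrac{x_2^{1/p^i}}{x_k^{1/p^i}},\ldots,\tfrac{x_d^{1/p^i}}{x_k^{1/p^i}}\Bigr].
\]
This is the standard affine chart of the blow-up, and since the generators form a regular sequence it is a polynomial ring over $A_i$ modulo the linear relations $x_j^{1/p^i}=x_k^{1/p^i}\cdot\tfrac{x_j^{1/p^i}}{x_k^{1/p^i}}$. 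Setting $u=\tfrac{p}{x_k}$ gives $p=x_ku$ in $R(A,\m)_{(x_k)}$. Eliminating the $x_j$ ($j\neq k$), the chart becomes
\[
C(k)\llbracket x_k\rrbracket[u,t_j]/(p-x_ku),
\]
suitably completed, with the analogous description carrying $1/p^i$-roots at level $i$. The tower $\{R(A_i,\m_i)_{(x_k)}\}_i$ is then, up to the completion issue, the tower obtained by adjoining compatible $p$-power roots of $x_k$, $u$, $t_j$ and of $k$. This is a log-regular-type tower with the relation $p=x_k u$.

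The verification of conditions \textbf{(a)}--\textbf{(d)}, \textbf{(f)} and \textbf{(g)} for the pair $(R(A,\m)_{(x_k)},\tfrac{p}{x_k})$ would go as follows.
\begin{itemize}
\item \textbf{(a)}: $p=x_k\cdot\tfrac{p}{x_k}$ lies in the ideal.
\item \textbf{(b)}, \textbf{(c)}, \textbf{(d)}: modulo $u=\tfrac{p}{x_k}$ we have $p=0$. Each layer modulo $u$ is a polynomial or power series ring over $k^{1/p^i}$ in variables with $p$-power roots adjoined, modulo the image of $u$, so injectivity, pure inseparability and surjectivity of the Frobenius projections can be checked directly as in the monoid-algebra case.
\item \textbf{(f)}: take $I_1=(u^{1/p})$, which gives $I_1^p=I_0R_1$, and identify the kernel of $F_i$ with $(u^{1/p})$, as in \cref{ex:perfectoidtower} (3).
\item \textbf{(g)}: each layer is a domain, so it has no $u$-torsion.
\end{itemize}

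Alternatively, and more cleanly, I would deduce the result from \cref{prop:Rees} itself. Localization at the homogeneous element $x_k$ followed by taking degree $0$ commutes with reduction modulo $p$ and with the Frobenius projections. The condition $p\in I_0$ upgrades to $\tfrac{p}{x_k}$ because in degree $0$ one has $I_0R_{(x_k)}$-style identities: the pillar $(p^{1/p^i})$ of \cref{prop:Rees} (2) becomes $\bigl(\tfrac{p^{1/p^i}}{x_k^{1/p^i}}\bigr)$ after dividing by the unit $x_k^{1/p^i}$ of degree $1$. This gives (1) and (2). For (3) and (4), I would apply the same degree-$0$ localization to the tilt description of \cref{prop:Rees} (3), (4). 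The key compatibility is that the small tilt commutes with this localization, because $\varprojlim$ along the surjective Frobenius projections commutes with inverting the compatible system $(x_k^{s.\flat})^{1/p^i}$, which is a homogeneous unit after localizing.

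I expect the main obstacle to be exactly this step: that passing to $(\,\cdot\,)_{(x_k)}$ preserves conditions \textbf{(d)} and \textbf{(f-2)} and commutes with the inverse limit defining the small tilt. Localization is not étale here, so \cref{ex:perfectoidtower} (5) does not apply verbatim, and the $\varprojlim$ does not obviously commute with inverting $x_k$. The first thing I would try is the explicit presentation as a (completed) log-regular tower, where the tilt is computed coordinatewise exactly as in \cref{ex:perfectoidtower} (3).
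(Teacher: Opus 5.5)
\textbf{There is a genuine gap.} Your second route is the paper's route in outline: degree-$0$ localization at $x_k$, with the pillar $(p^{1/p^i})$ of \cref{prop:Rees} becoming $(p^{1/p^i}/x_k^{1/p^i})$ after dividing by the degree-$1$ unit $x_k^{1/p^i}$. However, the step you flag as the main obstacle is where all the content lies, and you leave it open. The claim that $\varprojlim$ along the Frobenius projections ``commutes with inverting'' $(x_k^{s.\flat})^{1/p^i}$ is not an argument. Nothing in the proposal shows that \textbf{(d)} and \textbf{(f-2)} survive passage to $(\,\cdot\,)_{(x_k)}$.

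Your diagnosis also merges two different operations. The map $R(A_i,\mathfrak{m}_i)\to R(A_i,\mathfrak{m}_i)_{(x_k)}$ is indeed not a localization. But it factors through the honest localization $G_i\coloneqq R(A_i,\mathfrak{m}_i)_{x_k}$, followed by taking the degree-$0$ part. The paper applies \cref{ex:perfectoidtower} (5) to that first step. The multiplicative set $\{x_k^n\}$ is disjoint from $I_0$, and inverting $x_k=(x_k^{1/p^i})^{p^i}$ in the $i$-th layer is the same as inverting $x_k^{1/p^i}$. So $\{G_i\}_{i\geq 0}$ is a preperfectoid tower arising from $(R(A,\mathfrak{m})_{x_k},p)$, with tilt obtained from \cref{prop:Rees} (3) by the same localization.

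The missing idea is that the second step, passing to degree $0$, needs no \'etale input; it is controlled entirely by the $\mathbb{Z}$-grading. The transition maps multiply degrees by $p$, and so does Frobenius, so the Frobenius projections preserve degrees. The pillars are generated by homogeneous elements, so
\[
G_{i+j}/I_iG_{i+j}=\bigoplus_{d\in\mathbb{Z}}[G_{i+j}]_d/(I_iG_{i+j}\cap[G_{i+j}]_d).
\]
Since $x_k^{1/p^i}$ is a homogeneous unit of degree $1$, $I_iG_{i+j}\cap[G_{i+j}]_0$ is generated by the degree-$0$ element $p^{1/p^i}/x_k^{1/p^i}$. Hence the degree-$0$ slices form a retract of the whole system: the rings, the maps $\overline{t_i}$, and the maps $F_i$. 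Every condition in question passes to such a retract:
\begin{itemize}
\item injectivity of $\overline{t_i}$ and the factorization of Frobenius;
\item surjectivity of $F_i$ (take the degree-$0$ component of a preimage);
\item $\ker F_i$ being the degree-$0$ part of $I_1(G_{i+1}/I_0G_{i+1})$, and $I_1^p=I_0R_1$;
\item torsion-freeness;
\item the inverse limits defining $R_i^{s.\flat}$ and $I_i^{s.\flat}$ (the limit of a retract of an inverse system is the corresponding retract of the limit).
\end{itemize}
This is why (1)--(4) follow directly from \cref{prop:Rees}, and it removes your $\varprojlim$ worry for the degree-$0$ step.

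Your first route would be a genuinely different, hands-on proof, but as written it does not close the gap either. The chart $A_i[\mathfrak{m}_i/x_k^{1/p^i}]$ is neither local nor complete, so \cref{ex:perfectoidtower} (3) cannot be invoked for it. The checks of \textbf{(b)}--\textbf{(d)} and \textbf{(f)} modulo $p/x_k$ are asserted rather than carried out.

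Your completion concern is not idle. Small tilts are complete along $I_0^{s.\flat}$, so the identification in (3) should be read after $(p^{s.\flat}/x_k^{s.\flat})$-adic completion, just as the proof of \cref{prop:Rees} works with completed Rees algebras. This is bookkeeping, though, not a failure of the mechanism above.
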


\begin{proof}
By \cref{prop:Rees} and \cref{ex:perfectoidtower} (5), $\{G_i\coloneqq R(A_i,\m_i)_{x_k}\}_{i\geq 0}$ is a preperfectoid tower arising from $(R(A,\m)_{x_k},p)$. Since
\[
G_{i+j}/I_iG_{i+j} = \bigoplus_{d\in\Z}[G_{i+j}]_d/(I_iG_{i+j}\cap[G_{i+j}]_d)
\]
and
\[
I_iG_{i+j}\cap[G_{i+j}]_0 = \left( \frac{p^{1/p^i}}{x_k^{1/p^i}} \right) = (I_i\cap[G_i]_0)\cdot[G_{i+j}]_0,
\]
for any $i,j\geq 0$, all assertions follow easily from \cref{prop:Rees}.
\end{proof}

\begin{example}
\label{ex:ZpT}
Consider the unramified complete regular local ring $A=\Z_p\llbracket T\rrbracket$. Then
\[
A[\tfrac{p}{T}] \hookrightarrow A_1[\left(\tfrac{p}{T}\right)^{1/p}] \hookrightarrow \cdots \hookrightarrow A_i[\left(\tfrac{p}{T}\right)^{1/p^i}] \hookrightarrow \cdots
\]
is a preperfectoid tower arising from $(\Z_p\llbracket T\rrbracket[\tfrac{p}{T}], \tfrac{p}{T})$, where $A_i\coloneqq A[p^{1/p^i},T^{1/p^i}]$.
\end{example}

This example also arises from the following construction (see \cref{ex:ZpTU} (1)).

\begin{construction}
\label{const:monomial}
Let $R=W(k)\llbracket x_1,\ldots,x_d\rrbracket/(p-f)$ be a ramified complete regular local ring of mixed characteristic $(0,p)$, where $W(k)$ is the Witt ring of a perfect field $k$ of characteristic $p$ and $f\in(x_1,\ldots,x_d)^2\setminus\{0\}$. Suppose that $f$ is a monomial. Then we have the compatible system of $p$-power roots $\{f^{1/p^i} \in W(k)\llbracket x_1,\ldots,x_d\rrbracket[x_1^{1/p^i},\ldots,x_d^{1/p^i}]\}_{i\geq 0}$.
We define a tower of rings $\textrm{{\boldmath $R$}}=\{R_i,t_i\}_{i\geq 0}$ as follows.
  \begin{itemize}
  \item For any $i\geq 0$, let
  \[
  R_i\coloneqq R[x_1^{1/p^i},\ldots,x_d^{1/p^i}]/(p^{1/p^i}-f^{1/p^i}).
  \]
  \item For any $i\geq 0$, the transition map $t_i\colon R_i\to R_{i+1}$ is given by the canonical inclusion
  \[
  R[x_1^{1/p^i},\ldots,x_d^{1/p^i}] \hookrightarrow R[x_1^{1/p^{i+1}},\ldots,x_d^{1/p^{i+1}}].
  \]
  Here the inclusion $(p^{1/p^i}-f^{1/p^i}) \subset (p^{1/p^{i+1}}-f^{1/p^{i+1}})$ can be checked by considering separately the cases $p=2$ or $p\geq 3$.
  \end{itemize}
\end{construction}

\begin{proposition}
\label{prop:g}
Consider the situation as in \cref{const:monomial}.
Assume that $k\llbracket x_1,\ldots,x_d\rrbracket/(\ol{f})$ is reduced, where $\ol{f}$ is the image of $f$ in $k\llbracket x_1,\ldots,x_d\rrbracket$.
Let $g\in W(k)\llbracket x_1,\ldots,x_d\rrbracket$ with $f\in(g)$.
  \begin{enumerate}
  \item {\boldmath $R$} is a preperfectoid tower arising from $(R,g)$.
  \item The $i$-th perfectoid pillar $I_i\subset R_i$ of {\boldmath $R$} is $g^{1/p^i}R_i$, where $g^{1/p^i}$ is the $p^i$-th root of $g$.
  \item The tilt $\textrm{{\boldmath $R$}}^\flat$ of {\boldmath $R$} is isomorphic to the perfect tower $\{\left(k\llbracket x_1,\ldots,x_d\rrbracket/(\ol{f})\right)^\wedge_{\ol{g}}, \varphi\}_{i\geq 0}$, where $\ol{g}$ is the image of $g$ in $k\llbracket x_1,\ldots,x_d\rrbracket$. 
  \item The small tilt $I_i^{s.\flat}\subset R_i^{s.\flat}$ of $I_0^{s.\flat}$ is isomorphic to $(\ol{g})\subset \left( k\llbracket x_1,\ldots,x_d\rrbracket/(\ol{f})\right)^\wedge_{\ol{g}}$.
  \end{enumerate}
\end{proposition}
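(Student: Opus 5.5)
The plan is to compute everything modulo $g$ explicitly. Set $P\coloneqq W(k)\llbracket x_1,\ldots,x_d\rrbracket$, $P_i\coloneqq P[x_1^{1/p^i},\ldots,x_d^{1/p^i}]$ and $S\coloneqq k\llbracket x_1,\ldots,x_d\rrbracket$, so that $R_i=P_i/(p^{1/p^i}-f^{1/p^i})$. Here $p^{1/p^i}$ must be read inside $R_i$; in practice I would work in $W(k^{1/p^i})\llbracket x^{1/p^i}\rrbracket=W(k)\llbracket x^{1/p^i}\rrbracket$ with a formal $p^{1/p^i}$ adjoined, as in \cref{ex:perfectoidtower} (2). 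Since $f\in(g)$ and $p=f$ in $R$, we get $p\in gR$, so condition \textbf{(a)} holds. Modulo $g$ we have $p\equiv 0$ and $f\equiv 0$. The key identification is therefore
\[
R_i/gR_i\;\cong\; S[x^{1/p^i}]/(\ol{g}),
\]
possibly twisted by the nilpotent $p^{1/p^i}$-part, where the relation $p^{1/p^i}=f^{1/p^i}$ identifies $p^{1/p^i}$ with the monomial $\ol{f}^{1/p^i}$. Concretely, I expect $R_i/gR_i\cong \bigl(S/(\ol f)\bigr)^{1/p^i}\otimes$-type ring, namely $S^{1/p^i}/(\ol{g})$, with the image of $p^{1/p^i}$ equal to $\ol f^{1/p^i}$. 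The first step is to make this isomorphism precise, using that $f$ is a monomial so that $f^{1/p^i}$ lies in $P_i$.

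With this description I would verify axioms \textbf{(b)}--\textbf{(g)} except \textbf{(e)}. For \textbf{(b)}, the map $S^{1/p^i}/(\ol g)\to S^{1/p^{i+1}}/(\ol g)$ is injective because $S^{1/p^{i+1}}$ is free over $S^{1/p^i}$ by Kunz's theorem. For \textbf{(c)}, note that $p$-th powers of $S^{1/p^{i+1}}$ land in $S^{1/p^i}$. The Frobenius projection is then $F_i(a)=a^p$, viewed as a map $S^{1/p^{i+1}}/(\ol g)\to S^{1/p^i}/(\ol g^p)\to S^{1/p^i}/(\ol g)$. It is surjective because $k$ is perfect, which gives \textbf{(d)}. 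Its kernel is generated by $\ol g^{1/p}$, so $I_1=g^{1/p}R_1$ and, iterating, $I_i=g^{1/p^i}R_i$; this yields \textbf{(f)} and (2).

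The main obstacle is that $g$ itself need not have $p$-power roots in $P_i$ unless it is a monomial. I would first try to reduce to that case. Since $f\in(g)$ and $f$ is a monomial in the UFD $P$, $g$ is a unit times a monomial, and after rescaling the unit (which has $p$-power roots modulo $g$ because $k$ is perfect and henselian lifting applies), $g^{1/p^i}$ makes sense. For \textbf{(g)}, I would show $R_i$ is $g$-torsion free, using that $R_i$ is a domain or at least reduced with $g$ a nonzerodivisor. This should follow from $R_i$ being regular and flat over $R$, as in \cref{ex:perfectoidtower} (2), with $g$ dividing $p$. The torsion condition is then vacuous.

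For the tilt, the inverse limit $\varprojlim_{F}S^{1/p^{i+m}}/(\ol g)$ under $a\mapsto a^p$ is isomorphic via $p^m$-th powers to $\varprojlim_m S^{1/p^i}/(\ol g^{p^m})=(S^{1/p^i})^\wedge_{\ol g}$. This is $\ol g$-adically complete, and $S$ is already complete, so the completion is harmless. The relation coming from $p$ gives the quotient by $\ol f$. I would check this carefully: in the limit, $p$ corresponds to $\ol f$, and the compatible system of images of $p^{1/p^{m}}$ is forced to vanish, since $p\equiv 0$ modulo $g$ at every level. Hence the tilt is $\bigl(S/(\ol f)\bigr)^{\wedge}_{\ol g}$, perfected along $\varphi$; reducedness of $S/(\ol f)$ ensures it forms a perfect tower. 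This gives (3). Finally, (4) follows from the definition $I_0^{s.\flat}=\ker(\pr_0)$ composed with $R/I_0$, which under this isomorphism is $(\ol g)$.
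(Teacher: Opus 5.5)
\textbf{There is a genuine gap in your derivation of (3), and it cannot be closed.}

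Your special-fibre computation is correct, and it is where you part ways with the paper. Since $R_i=W(k)\llbracket x_1^{1/p^i},\ldots,x_d^{1/p^i}\rrbracket/(p-f)$ and $p=(p-f)+f\in(p-f,g)$, one has exactly (no ``twist'') $R_i/gR_i\cong S^{1/p^i}/(\overline{g})$. With this, your checks of \textbf{(b)}--\textbf{(d)}, \textbf{(f)}, \textbf{(g)} and of (2) go through, and they never use that $S/(\overline{f})$ is reduced. For $g^{1/p^i}$, just replace $g$ by the monomial generating $gR$. Your appeal to henselian lifting is wrong, since $T^p-u$ is inseparable, but it is not needed.

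The step that fails is the claim that the images of $p^{1/p^m}$ vanish modulo $g$. In $R_m$ one has $p^{1/p^m}=f^{1/p^m}$, whose image in $S^{1/p^m}/(\overline{g})$ is $\overline{f}^{1/p^m}$. The fact $p\in gR$ only makes this element nilpotent, not zero. For example, take $d=2$, $f=x_1x_2$, $g=x_2$, and $y_j=x_j^{1/p}$. Then $R_1/gR_1\cong k\llbracket y_1,y_2\rrbracket/(y_2^p)$, in which $p^{1/p}=y_1y_2\neq 0$. Under your own isomorphism $R^{s.\flat}\cong\varprojlim_m S/(\overline{g}^{p^m})=S$, the compatible system $(p^{1/p^m})_m$ goes to $\overline{f}\neq 0$. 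So your argument actually proves that $R_i^{s.\flat}\cong S$, with Frobenius as transition maps and $I_i^{s.\flat}$ corresponding to $(\overline{g})\subset S$. Nothing in it produces a quotient by $\overline{f}$.

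The paper obtains $\overline{f}$ from a different identification:
\[
R_i/gR_i\cong S^{1/p^i}/(\overline{f}^{1/p^i},\overline{g})\cong S/(\overline{f},\overline{g}^{p^i}).
\]
Under it, $\overline{t_i}$ is the Frobenius (injective because $S/(\overline{f})$ is reduced), $F_i$ is the canonical surjection, and $R_i^{s.\flat}=\varprojlim_n S/(\overline{f},\overline{g}^{p^n})$. But this identification rests on the same unjustified vanishing of $p^{1/p^i}$ modulo $g$. It disagrees with yours for $i\geq 1$: in the example above it gives $k\llbracket y_1,y_2\rrbracket/(y_1y_2,y_2^p)$ instead of $k\llbracket y_1,y_2\rrbracket/(y_2^p)$.

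Your identification is the one forced by the definition of $R_i$, and it is also the one consistent with the rest of the paper:
\begin{itemize}
\item For $g=f$, the tower is the one of \cref{ex:perfectoidtower}~(2). Its tilt is $k\llbracket x_1,\ldots,x_d\rrbracket$ with $I_0^{s.\flat}=(\overline{f})$, whereas (3)--(4) would give $S/(\overline{f})$ with $I_0^{s.\flat}=0$.
\item In general, $R_i$ is a $g$-torsion free regular local ring. Its tilt should therefore be torsion free (by the Remark closing \S2) and regular (by the corollary following \cref{thm:regularFlat}). The ring $(S/(\overline{f}))^\wedge_{\overline{g}}$ is neither.
\end{itemize}

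So no better argument will rescue this step. With $R_i$ as defined, (3) and (4) hold with $S$ in place of $(S/(\overline{f}))^\wedge_{\overline{g}}$, and the reducedness hypothesis is superfluous. You should have trusted your identification rather than forcing a quotient by $\overline{f}$ to match the statement.
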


\begin{proof}
(1) We are going to check the conditions \textbf{(a)} $\sim$ \textbf{(g)} in \cref{def:inseptower,def:perfectoidtower}.

\textbf{(a)} follows from $p\in fR\subset gR$.

\textbf{(b)} Let $i\geq 0$. Since
\begin{align}
\label{eq:modg}
R_i/gR_i &\cong k\llbracket x_1,\ldots,x_d\rrbracket[x_1^{1/p^i}, \ldots,x_d^{1/p^i}]/(\ol{f}^{1/p^i},\ol{g}) \\
&\xr{\cong} k\llbracket x_1,\ldots,x_d\rrbracket/(\ol{f},\ol{g}^{p^i}), \notag
\end{align}
the induced map $\ol{t_i}\colon R_i/gR_i\to R_{i+1}/gR_{i+1}$ is isomorphic to the map
\begin{equation}
\label{eq:modgt}
k\llbracket x_1,\ldots,x_d\rrbracket/(\ol{f},\ol{g}^{p^i}) \to k\llbracket x_1,\ldots,x_d\rrbracket/(\ol{f},\ol{g}^{p^{i+1}})
\end{equation}
induced by the absolute Frobenius of $k\llbracket x_1,\ldots,x_d\rrbracket$. Thus the injectivity of $\ol{t_i}$ follows from the assumption that $k\llbracket x_1,\ldots,x_d\rrbracket/(\ol{f})$ is reduced.

\textbf{(c)} \textbf{(d)} We deduce from \eqref{eq:modg} that the $i$-th Frobenius $F_i\colon R_{i+1}/gR_{i+1}\to R_i/gR_i$ is given by the canonical surjection
\begin{equation}
\label{eq:modgF}
k\llbracket x_1,\ldots,x_d\rrbracket/(\ol{f},\ol{g}^{p^{i+1}}) \to k\llbracket x_1,\ldots,x_d\rrbracket/(\ol{f},\ol{g}^{p^i}).
\end{equation}

\textbf{(f)} The kernel of \eqref{eq:modgF} is $(\ol{f},\ol{g}^{p^i})/(\ol{f},\ol{g}^{p^{i+1}})$, which corresponds to the ideal $g^{1/p}(R_i/gR_i)$.

\textbf{(e)} follows because $R$ is a complete local ring of mixed characteristic $(0,p)$.

\textbf{(g)} follows automatically, since $R_i$ is a domain and $g\neq 0$.

(2) By the proof of (1), the first perfectoid pillar $I_1$ is $g^{1/p}(R_1/gR_1)$. For $i\geq 2$, the $i$-th perfectoid pillar $I_i\subset R_i$ is $g^{1/p^i}R_i$ by the definition of the Frobenius projection \eqref{eq:modgF}.

(3) By \eqref{eq:modg}, the $i$-th small tilt is
\[
R_i^{s.\flat}=\varprojlim_i k\llbracket x_1,\ldots,x_d\rrbracket/(\ol{f},\ol{g}^{p^n}) = \left(k\llbracket x_1,\ldots,x_d\rrbracket/(\ol{f})\right)^\wedge_{\ol{g}},
\]
the $\ol{g}$-adic comletion of $k\llbracket x_1,\ldots,x_d\rrbracket/(\ol{f})$. Moreover, the transition map $t_i^{s.\flat}\colon R_i^{s.\flat} \to R_{i+1}^{s.\flat}$ is the projective limit of the $p$-power maps \eqref{eq:modgt}, which is isomorphic to the absolute Frobenius of $\left(k\llbracket x_1,\ldots,x_d\rrbracket/(\ol{f})\right)^\wedge_{\ol{g}}$.

(4) The small tilt $I_i^{s.\flat}\subset R_i^{s.\flat}$ of $I_i$ is the kernel of the horizontal map of the commutative diagram
\[
\begin{tikzcd}
R_i^{s.\flat} \rar["\pr_0"] \dar["\cong"'] & R_i/gR_i \rar[twoheadrightarrow] \dar["\cong"] & R_i/g^{1/p^i}R_i \dar["\cong"] \\
k\llbracket x_1,\ldots,x_d\rrbracket/(\ol{f}) \rar["\pr_0"] & k\llbracket x_1,\ldots,x_d\rrbracket/(\ol{f},\ol{g}^{p^i}) \rar & k\llbracket x_1,\ldots,x_d\rrbracket/(\ol{f},\ol{g}),
\end{tikzcd}
\]
where the right two horizontal arrows are the canonical projections. This shows that $I_i^{s.\flat}\cong (\ol{g})\subset \left(k\llbracket x_1,\ldots,x_d\rrbracket/(\ol{f})\right)^\wedge_{(\ol{g})}$.
\end{proof}

\begin{example}
\label{ex:ZpTU}
  \begin{enumerate}
  \item Consider the ramified complete regular local ring $R=\Z_p\llbracket T,U\rrbracket/(p-TU) \cong \Z_p\llbracket T\rrbracket[\frac{p}{T}]$, and take $g$ to be $U=\tfrac{p}{T}$. Then the resulting preperfectoid tower is the same as that in \cref{ex:ZpT}.
  \item The ramified complete regular local ring $R=\Z_p\llbracket T,U\rrbracket/(p-T^2U^3)$ admits a preperfectoid tower arising from $(R,g)$, where $g$ is either one of $U,U^2,U^3,T,TU,TU^2,TU^3,T^2U,T^2U^2$, or $T^2U^3$ ($=p$).
  \end{enumerate}
\end{example}


\end{document}